\documentclass{amsart}
\usepackage[latin1]{inputenc}
\usepackage[T1]{fontenc}
\usepackage[english]{babel}
\usepackage{csquotes}

\usepackage[margin=1.6in, top=1.4in, bottom=1.4in]{geometry}

\usepackage{color} 
\usepackage[colorlinks=true,linktoc=all,linkcolor=blue,citecolor=red,filecolor=pink,urlcolor=blue]{hyperref}
\usepackage[hyperref=true,style=numeric,backend=bibtex,maxnames=100,doi=false,isbn=false,url=false,giveninits=true]{biblatex}
\bibliography{biblio}

\usepackage{marvosym}
\usepackage{wasysym}

\usepackage{enumitem}
\usepackage{graphicx}
\graphicspath{{pictures/}}
\usepackage[justification=justified]{caption}

\usepackage[mathcal,mathscr]{euscript}

\usepackage{tikz}
\usetikzlibrary{arrows}

\usepackage{amssymb,amsmath,latexsym}
\theoremstyle{plain}                    
\newtheorem{theorem}{Theorem}[section]
\newtheorem{lemma}[theorem]{Lemma}
\newtheorem{proposition}[theorem]{Proposition}
\newtheorem{corollary}[theorem]{Corollary}
\theoremstyle{definition}
\newtheorem{definition}[theorem]{Definition}
\newtheorem{example}[theorem]{Example}
\theoremstyle{remark}
\newtheorem{remark}[theorem]{Remark}

\numberwithin{equation}{section}
%
\theoremstyle{plain}                    
\newtheorem{introtheorem}{Theorem}

\newcommand{\theoremnumber}{} 
\newtheorem*{maintheorem}{Theorem \theoremnumber}
\newenvironment{maintheoremc}[1]
  {\renewcommand{\theoremnumber}{#1}%
  \begin{maintheorem}}
  {\end{maintheorem}}

\newcommand{\mc}{\mathcal}
\newcommand{\Stab}{\mathrm{Stab}}

\newtheorem* {claim*}{Claim}

\newtheorem* {subclaim*}{Subclaim}

\newcommand{\leftQ}[2]{\left.\raisebox{-.2em}{$#2$}\middle\backslash\raisebox{.2em}{$#1$}\right.}
\newcommand{\doubleQ}[3]{\left.\raisebox{-.1em}{$#3$}\middle\backslash \raisebox{.1em}{$#1$}\middle/\raisebox{-.1em}{$#2$}\right.}

\newcommand{\co}{\colon\thinspace}
\newcommand{\CAT}{\ensuremath{\operatorname{CAT}}}

\newcommand{\zz}{\mathbb Z}

\newcommand{\rr}{\mathbb R}

\newcommand{\hh}{\mathbb H}

\newcommand{\ishyp}[1]{\operatorname{Isom}(\hh^{#1})}

\newcommand{\quathyp}[1]{\hh^{#1}_{\hh}}

\newcommand{\spisom}[1]{\operatorname{Isom}(\quathyp{#1})}
\newcommand{\slr}[1]{\operatorname{SL}(#1,\rr)}
\newcommand{\sor}[1]{\operatorname{SO}(#1,\rr)}
\newcommand{\spr}[1]{\operatorname{Sp}(1,#1)}

\newcommand{\stab}[2]{\operatorname{Stab}_{#1}({#2})}

\newcommand{\cat}[1]{\operatorname{CAT}(#1)}

\newcommand{\gromov}[1]{\mathcal G (#1)}

\newcommand{\hyperbolize}[1]{\mathcal H(#1)} 
\newcommand{\relhyp}[2]{\mathcal R(#1,#2)} 
\newcommand{\hypmt}[1]{\mathcal T(#1)} 

\newcommand{\cone}[1]{\operatorname C(#1)} 
\newcommand{\relcone}[2]{\operatorname C(#1,#2)} 

\newcommand{\hc}{X_\Gamma} 

\newcommand{\uchc}{\widetilde{\hc}} 
\newcommand{\buchc}{\widehat{\hc}} 
\newcommand{\icrhc}[2]{\mathcal R^\circ(#1,#2)} 
\newcommand{\ucrhc}[2]{\widetilde{\mathcal R}(#1,#2)} 


\newcommand{\hq}{\square^n_\Gamma} 
\newcommand{\uchq}{\widetilde{\hq}} 

\newcommand{\hg}{\Gamma_X} 

\newcommand{\cd}{g} 
\newcommand{\cdX}{\cd_X} 
\newcommand{\cdXbu}{\widehat \cdX} 

\newcommand{\foldbuchc}{\hat f} 

\newcommand{\piuchc}{\pi} 
\newcommand{\pibuchc}{\hat \pi} 

\newcommand{\dcc}[1]{\mathcal C(#1)} 
\newcommand{\dccx}{\mathcal C (\uchc)} 

\newcommand{\bv}{\mathcal{BV}} 
\newcommand{\cv}{\mathcal{CV}} 

\newcommand{\bmirrors}{\widehat {\mathcal M}} 
\newcommand{\mirrors}{\mathcal M} 
\newcommand{\bcomponents}{\widehat {\mathcal C}} 
\newcommand{\components}{\mathcal C} 
\newcommand{\bedgespace}{E_{M,C}^\varepsilon}
\newcommand{\edgespace}{E_{M,C}^\varepsilon}

\newcommand{\length}[1]{\ell(#1)}
\newcommand{\lk}[2]{\operatorname{lk}(#1,#2)}

\makeatletter
\@namedef{subjclassname@2020}{\textup{2020} Mathematics Subject Classification}
\makeatother

\begin{document}
\title[Relative cubulation of relative strict hyperbolization]{Relative cubulation of\\ relative strict hyperbolization}
\author{Jean-Fran\c{c}ois Lafont}
\address{Department of Mathematics - The Ohio State University, 231 West 18th Avenue, Columbus, OH 43210-1174, USA}
\email{jlafont@math.ohio-state.edu}
\author{Lorenzo Ruffoni}
\address{Department of Mathematics and Statistics - Binghamton University, Binghamton, NY 13902, USA}
\email{lorenzo.ruffoni2@gmail.com}
\date{\today}
 \subjclass[2020]{20F67, 53C23, 20E26, 57Q15}

\makeatletter
\let\@wraptoccontribs\wraptoccontribs
\makeatother
\contrib[with an appendix by]{Daniel Groves and Jason Manning}
\address{Department of Mathematics, Statistics, and Computer Science, University of Illinois at Chicago, 322 Science and Engineering Offices (M/C 249), 851 S. Morgan St., Chicago, IL 60607-7045}
\email{dgroves@uic.edu}

\address{Department of Mathematics, 310 Malott Hall, Cornell University, Ithaca, NY 14853}
\email{jfmanning@cornell.edu}

\begin{abstract}
We prove that many relatively hyperbolic groups obtained by  relative strict hyperbolization admit a cocompact action on a $\cat 0$ cubical complex.
Under suitable assumptions on the peripheral subgroups, these groups are residually finite and even virtually special.
We include some applications to the theory of manifolds, such as the construction of new non-positively curved Riemannian manifolds with residually finite fundamental group, and the existence of non-triangulable aspherical manifolds with virtually special fundamental group.
\end{abstract}

\maketitle

\tableofcontents


\section{Introduction}
\addtocontents{toc}{\protect\setcounter{tocdepth}{1}}

Hyperbolization procedures were introduced by Gromov in \cite{G87} as a way to construct aspherical manifolds which allows for more flexibility than the classical methods coming from Lie theory.
Roughly speaking, a hyperbolization procedure is defined by the choice of a \textit{hyperbolizing cell} and of a class of combinatorial complexes to be hyperbolized, and it consists in replacing the cells of a complex with copies of the chosen hyperbolizing cell.
The resulting space is often called a \textit{hyperbolized complex}, and it is a metric space of non-positive curvature in the sense of \cite{BH99}; in particular, it is aspherical.
In this paper we consider the \textit{strict} hyperbolization procedure introduced by Charney and Davis in \cite{CD95}, which can produce a space of strictly negative curvature.

A main feature of any hyperbolization procedure is that links of cells remain essentially unchanged under hyperbolization.
In particular, hyperbolizing a triangulation of a closed (pseudo-)manifold results in a closed aspherical (pseudo-)manifold.
This has been used to construct examples of closed aspherical (pseudo-)manifolds that display various pathological features; see \cite{DJ91,CD95,O20} for a few examples.
For some applications, one may want to preserve a certain subcomplex of the original complex, i.e., ensure that the hyperbolized complex contains a certain prescribed subspace. 
This can be arranged via \textit{relative} hyperbolization procedures, which consist in coning off the desired subcomplex, performing a non-relative hyperbolization, and then removing a neighborhood of the cone point. 
The link of the cone point is homeomorphic to the desired subcomplex.
See \cite{DJW01,BE06,BZH06,NP13,DFL14,SW20,RU23,AOS24,LW24} for a few examples of this strategy.

Since the spaces obtained from hyperbolization are aspherical, a natural problem is to understand what their fundamental groups look like.
As one might expect, the fundamental group of a space obtained via Charney-Davis strict hyperbolization is a hyperbolic group.
In \cite{LR24}  some mild conditions have been identified under which these hyperbolic groups are virtually special.
In particular, all the closed aspherical manifolds obtained via Charney-Davis strict hyperbolization have linear, hence residually finite, fundamental group.

Similarly, the fundamental group of a space obtained via the relative version of Charney-Davis strict hyperbolization is a relatively hyperbolic group, with peripheral structure given by the fundamental group of the desired subcomplex; see \cite{BE07}.
In this paper we aim at extending the results of \cite{LR24} to the relative setting.
To this end, we consider the relatively hyperbolic groups that arise from relative strict hyperbolization and construct certain actions on $\cat 0$ cubical complexes.
Let us fix some objects and notations that appear in the statement of our main results.

Let $K$ be a finite simplicial complex and $L$ a subcomplex.
Assume that the complex $\relcone KL$ obtained by coning off each component of $L$ is homogeneous and without boundary (i.e., each simplex is contained in an $n$-simplex, and each $(n-1)$-simplex is contained in at least two $n$-simplices).
Note that this implies in particular that $L$ is homogeneous of dimension $n-1$ and without boundary.
As a motivating example, consider the case in which $K$ is a triangulation of a compact manifold with boundary and $L$ is the induced triangulation of the boundary.
This is a common setup in many contexts in which relative hyperbolization procedures are used, and will be a standing assumption throughout this paper.
Let $\relhyp KL$ denote the relative strict hyperbolization of $K$ with respect to $L$; see \cite{DJW01,BE07} or \S\ref{sec:rel hyp} below for details.
There is a natural $\pi_1$-injective embedding of $L$ into $\relhyp KL$.
Let $\mathcal P$ be a set of representatives of the conjugacy classes of the fundamental groups of the components of $L$.
Then the group $G=\pi_1(\relhyp KL)$ is known to be relatively hyperbolic with respect to $\mathcal P$.
Our main result is the following.

\begin{introtheorem}\label{thm:intro main}
The group $G=\pi_1(\relhyp KL)$ acts on a $\cat 0$ cubical complex $\dcc{\buchc}$ by isometries and satisfying  the following properties.
\begin{enumerate}
    \item  $\leftQ{\dcc \buchc}{G}$ is compact.
    \item Each $P \in \mc{P}$ acts elliptically on $\dcc \buchc$.
    
    \item For each cube $C$ of $\dcc \buchc$, $\stab{G}{C}$ is either maximal parabolic, or else is full relatively quasi-convex, hyperbolic, and virtually compact special. 
\end{enumerate}
\end{introtheorem}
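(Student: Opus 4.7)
The plan is to adapt the cubulation strategy of \cite{LR24} from the absolute to the relative setting, by passing to the Groves--Manning cusped space $\buchc$ associated with $(G,\mc{P})$. Recall that in the absolute case the Charney--Davis hyperbolizing cell comes equipped with a canonical system of codimension-one mirror subcomplexes; in the universal cover of the hyperbolization these mirrors lift to a locally finite, $\pi_1$-equivariant family of two-sided convex walls, and Sageev's dualisation yields a cocompact $\cat 0$ cubulation whose cube stabilizers inherit a strict hyperbolization structure of strictly smaller complexity and can therefore be shown to be virtually compact special by induction.

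First I would port this wall structure across. Because $\relhyp KL$ is obtained from the non-relative hyperbolization of $\relcone KL$ by removing open neighborhoods of the cone points, the mirror walls survive on the universal cover $\uchc$, and they hit the peripheral boundary components (lifts of $L$) transversally in codimension-one subcomplexes. Next I would extend this wall system $G$-equivariantly from $\uchc$ to $\buchc = \uchc\cup\bigsqcup_{gP} H_{gP}$ by sending each boundary trace of a mirror wall on a peripheral component to the corresponding vertical combinatorial hyperplane in the Groves--Manning horoball attached along that component. No wall disjoint from $\uchc$ is introduced, so there are only finitely many $G$-orbits of walls. I would then invoke Sageev's construction to define $\dcc{\buchc}$.

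For property (1), cocompactness follows from the fact that the wall pattern on $\buchc$ is locally finite and $G$-cofinite: near a horoball $H_P$ the combinatorics is determined by the finite collection of $P$-orbits of walls meeting the cusped component, and away from the horoballs we are in the $G$-cocompact piece $\uchc$ treated in \cite{LR24}. For property (2), every vertical wall in a horoball $H_P$ is $P$-invariant, and the horoball geometry forces a canonical choice of ``deep'' halfspace for every wall of $\buchc$ that is $P$-moved; this ultrafilter is $P$-invariant and defines a vertex of $\dcc{\buchc}$ fixed by $P$, so $P$ acts elliptically.

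The main obstacle is property (3), the stabilizer dichotomy. A cube $C$ corresponds to a finite family $\mathcal{W}$ of pairwise-crossing walls. I would split into two cases. If all walls in $\mathcal{W}$ are vertical walls in a single horoball $H_P$, then $\stab{G}{C}\subseteq P$, and combinatorial horoball geometry together with ellipticity of $P$ should force $\stab{G}{C}=P$, giving the maximal parabolic case. Otherwise $\mathcal{W}$ contains a wall with nontrivial projection to $\uchc$; intersecting the corresponding halfspaces reduces the stabilizer analysis to that of a cube in the ``absolute part'', which by the proof of \cite{LR24} is the fundamental group of a smaller strict hyperbolization, hence hyperbolic and virtually compact special. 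The delicate point, and the one that I expect will require the machinery of the Groves--Manning appendix, is verifying \emph{full} relative quasi-convexity: namely that such a stabilizer meets each conjugate of a peripheral in either a finite or finite-index subgroup. This should follow from the fact that wall stabilizers intersect horoballs in groups commensurable with vertical parabolic subgroups, combined with the standard characterisation of full relative quasi-convexity in the cusped space as quasi-convexity of the preimage under the cusping map.
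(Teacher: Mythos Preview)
Your proposal rests on a misreading of the notation: in this paper $\buchc$ is \emph{not} the Groves--Manning cusped space. It is the \emph{branched} universal cover of the strictly hyperbolized coned-off space $\hc=\hyperbolize{\gromov{\relcone KL}}$: one punctures the universal cover $\uchc$ at the cone points, passes to the universal cover, and takes the metric completion. Equivalently (Remark~\ref{rem:cover is coned}), $\buchc$ is obtained from the universal cover $\ucrhc KL$ of $\relhyp KL$ by attaching an honest metric \emph{cone} over each lift $\widetilde L_i$, not a combinatorial horoball. The result is a $\cat{-1}$ piecewise hyperbolic complex with isolated branch points whose links are $\widetilde L_i$; there are no horoballs, no ``vertical hyperplanes'', and no deep-end ultrafilter argument to be made. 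Similarly, $\dcc{\buchc}$ is not the Sageev cube complex dual to a wallspace: it is the cube complex whose vertices are the cells of the mirror stratification of $\buchc$, with edges recording codimension-one inclusion. Every such vertex has a well-defined height, and each cube has a unique vertex of minimal height; this is what drives the stabilizer analysis.

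Because of this, your case split in (3) does not match the situation (and is internally inconsistent: you introduce no walls disjoint from $\uchc$, yet later contemplate cubes all of whose walls are vertical in a single horoball). In the paper the dichotomy is simply whether the minimal-height vertex of a cube is a branch vertex or not. If it is, the cube stabilizer is either a maximal parabolic (when the cube is that vertex) or trivial (Lemma~\ref{lem:branched cube stab}, via the fixed-point argument of Lemma~\ref{lem:trivial int stab}). If it is not, the cube stabilizer equals the stabilizer of the dual cell in $\buchc$, which folds isomorphically onto a quasiconvex subgroup of $\pi_1(\hq)$ and is therefore hyperbolic and virtually compact special. Finally, you have the dependency backwards on full relative quasiconvexity: it is the easy part, not the hard one. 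The action of $G$ on $\buchc$ is cocompact with isotropy exactly at the branch points, so a relative Milnor--\v{S}varc argument (Lemma~\ref{lem:relative MilnorSvarc}) gives relative quasiconvexity of any cell stabilizer, and Lemma~\ref{lem:trivial int stab} shows such a stabilizer meets each maximal parabolic trivially, so it is in fact \emph{strongly} (hence full) relatively quasiconvex. The Groves--Manning appendix is used only for Theorem~\hyperref[thm: res fin special main]{B}, not for Theorem~\hyperref[thm:action main]{A}.
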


This set up does not automatically fit in the  available literature about cubulation of relatively hyperbolic groups, such as \cite{EG20,EG22,GM22,RE23}.
Indeed, the action of $G$ on $\dcc{\buchc}$ is very far from proper, because most vertices have infinite stabilizer.
This action is not relatively geometric in the sense of \cite{EG20,EG22}, and not even weakly relatively geometric in the sense of  \cite{GM22}.
The problem here does not arise from the parabolics, but rather from points that are away from the fixed points of the parabolics.
It is the same lack of properness already encountered in \cite{LR24}, which required a criterion obtained by Groves and Manning in \cite{GM23} to promote an improper action of a hyperbolic group to a proper one (on a different cubical complex).
A relative analogue of \cite{GM23} is not available yet in full generality. 
However, one can obtain the following.

\begin{introtheorem}\label{introthm: res fin special main}
Let $G=\pi_1(\relhyp KL)$ and $\mathcal P$ be as above.
Then the following hold.
\begin{enumerate}
    \item \label{item GM app} If each $P\in \mathcal P$ is residually finite, then $G$ is residually finite and each $P\in \mathcal P$ is separable.

    \item If each $P\in \mathcal P$ is hyperbolic and virtually compact special, then $G$ is hyperbolic and virtually compact special.
\end{enumerate}
\end{introtheorem}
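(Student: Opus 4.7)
My plan is to apply Theorem~A and then invoke two separate criteria depending on the hypotheses on the peripheral subgroups.

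For part~(1), the action $G \curvearrowright \dcc{\buchc}$ provided by Theorem~A is cocompact, each peripheral subgroup acts elliptically, and every cube stabilizer is either a maximal parabolic or a hyperbolic, virtually compact special (hence residually finite) subgroup. Under the hypothesis that each $P \in \mathcal P$ is residually finite, all cube stabilizers are therefore residually finite. The strategy is to invoke a relative analogue of the Groves--Manning criterion of \cite{GM23}, which would allow one to promote such an action to conclude residual finiteness of $G$ together with separability of each $P \in \mathcal P$ in $G$. As emphasized in the discussion preceding Theorem~A, such a criterion is not available in full generality, so I would rely on the version tailored to this setup provided in the appendix by Groves and Manning.

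For part~(2), I would first reduce to a purely hyperbolic setting. A group which is hyperbolic relative to hyperbolic subgroups is itself hyperbolic, so the assumption that each $P \in \mathcal P$ is hyperbolic forces $G$ to be hyperbolic. Under this stronger hypothesis, part~(3) of Theorem~A implies that \emph{every} cube stabilizer in $\dcc{\buchc}$ is hyperbolic and virtually compact special: the maximal parabolics are so by hypothesis, while the full relatively quasi-convex ones were already so. Moreover, once $G$ is hyperbolic, full relative quasi-convexity becomes ordinary quasi-convexity. One is thus in the situation treated in \cite{LR24}: a cocompact action of a hyperbolic group on a $\cat 0$ cubical complex whose cube stabilizers are quasi-convex and virtually compact special. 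Applying the criterion of \cite{GM23} along the lines of \cite{LR24} then promotes this improper action to a proper cocompact cospecial action of $G$ on a different $\cat 0$ cubical complex, witnessing that $G$ is virtually compact special. Since residual finiteness is a consequence of being virtually compact special, or alternatively follows from part~(1) applied to the residually finite peripherals, this completes the argument.

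The main technical obstacle is concentrated in part~(1) and lies in the unavailability of a general relative version of \cite{GM23}; this is precisely what is addressed by the appendix, rather than by the main text. In part~(2), once hyperbolicity of $G$ is observed, the arguments of \cite{LR24, GM23} apply essentially verbatim, and the only point worth verifying carefully is that the stabilizer information extracted from Theorem~A fits exactly the hypotheses of \cite{GM23}.
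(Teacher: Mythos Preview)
Your outline matches the paper's proof: part~(1) is exactly an application of Theorem~A together with the appendix theorem, and part~(2) first upgrades $G$ to hyperbolic and then feeds the action from Theorem~A into \cite[Theorem~D]{GM23}.

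The one place where you are looser than the paper is the claim that ``once $G$ is hyperbolic, full relative quasi-convexity becomes ordinary quasi-convexity.'' This is the right target, but it is not automatic and the paper does not argue it as a single blanket statement. Instead it splits into two cases. For maximal parabolic stabilizers, quasi-convexity in $G$ is obtained from \cite[Corollary~8.2]{HR10}. For the remaining stabilizers, the paper uses the sharper information recorded in Remark~\ref{rem: strong}: these stabilizers are actually \emph{strongly} relatively quasiconvex, so \cite[Theorem~1.9]{OS06} gives that they are quasi-isometrically embedded in $G$, hence quasiconvex since $G$ is hyperbolic. With that refinement your argument is complete and coincides with the paper's.
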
 
Here \eqref{item GM app} follows from a more general result contained in the Appendix~\ref{sec:appendix} by Groves and Manning (see Theorem~\ref{thm: appendix main}).
 In particular, Theorem~\ref{introthm: res fin special main} shows that relative strict hyperbolization is unlikely to provide a negative answer to \cite[Problem 6.6]{OS06}, which is the relative analogue of the well-known question of Gromov about residual finiteness of hyperbolic groups.
 A  result analogous to Theorem~\ref{introthm: res fin special main} has recently been obtained by Avramidi, Okun, and Schreve  for a different relative hyperbolization procedure, obtained by combining the Davis reflection group trick with the Charney-Davis strict hyperbolization; see  \cite[Theorem D]{AOS24}.

 \medskip
We conclude the introduction with a brief description of some applications to the theory of manifolds.

\subsubsection*{Aspherical manifolds with residually finite fundamental groups}
 In \S\ref{sec:new examples} we obtain examples of closed aspherical manifolds which have residually finite fundamental group.
    This is based on a construction that we call \textit{hyperbolized mapping torus}, and which has already been considered for instance in \cite{NP13,RU23}.
    It consists in taking a manifold $M$, hyperbolizing $M\times [-1,1]$ relatively to the boundary, and then gluing the two boundary components to get a closed manifold $\hypmt M$.
    We show that certain properties of the fundamental group of $M$ (such as residual finiteness) are inherited by the fundamental group of $\hypmt M$.
    
    By choosing the  manifold $M$ appropriately, one can thus obtain examples which are ``new'', in the sense that they are not homotopy equivalent to manifolds for which residual finiteness of the fundamental group was previously known: 
    see Theorem~\ref{thm: new main thm} (non-positively curved, dimension $n \geq 6$) and Remark~\ref{rmk: new neg curved} (negatively curved, dimension $n \geq 9$).
    These examples are obtained starting  from lattices in $\slr 3$ and in $ \spr 2 = \spisom 2$ respectively.
    
    For $n\geq 5$, we can also use this construction to obtain some closed Riemannian manifolds having negative curvature and virtually compact special fundamental group; see Theorem~\ref{thm: new special}. 
    These can be obtained starting from Gromov-Thurston manifolds or strictly hyperbolized manifolds.
    On the other hand, it is worth mentioning that there are closed aspherical manifolds that do not virtually fiber over the circle and whose fundamental group is hyperbolic and virtually compact special; see \cite[Theorem A]{AOS24} for odd-dimensional examples.

\subsubsection*{Cobordism of manifolds}
In \S\ref{sec: cobordism} we consider some classical applications of hyperbolization procedures to cobordism of manifolds.
    In Corollary~\ref{cor: cobordism} we obtain that a cobordism between triangulable manifolds with residually finite fundamental group can be chosen to have residually finite fundamental group.
    Similarly, in Corollary~\ref{cor: flat} we obtain that every closed flat manifold bounds geometrically a manifold of pinched negative curvature with residually finite  fundamental group.
    In both cases, the ambient fundamental group is relatively hyperbolic and the fundamental groups of the boundary components are separable.

\subsubsection*{Aspherical manifolds that cannot be triangulated}
Finally, in \S\ref{sec: triangulability} we show that for $n\geq 6$ there is a closed aspherical manifold that cannot be triangulated and whose fundamental group is hyperbolic and virtually compact special; see Theorem~\ref{thm: new no tri}.
These are the manifolds constructed in \cite{DFL14} by gluing together two pieces, one obtained via strict hyperbolization and one obtained via relative strict hyperbolization.
Combining the results of \cite{LR24} and of this paper, we show that these manifolds have virtually special fundamental group.
In particular, they have a rich collection of finite covers, and it makes sense to ask if these manifolds are \textit{virtually triangulable}, i.e., if they admit a finite cover that can be triangulated.
While we do not answer this question, 
we show that no cover of odd degree can be triangulated.
The situation for  covers of even degree is more delicate. For instance, the Galewski-Stern $5$-manifold from \cite{GS79} is non-triangulable and non-orientable, but its orientable double cover is triangulable, since all orientable closed 5-manifolds are triangulable by \cite{SI70}.

\subsection*{Outline}
In \S\ref{sec:preliminaries} we collect background notation and terminology.
In \S\ref{sec:rel hyp} we present the relative strict hyperbolization procedure, introduce the spaces of interest in this paper, and present some lemmas about the action of the relatively hyperbolized groups on them.
In \S\ref{sec:dual complex} we construct the dual cubical complex, prove it is $\cat 0$, and complete the analysis of stabilizers. 
The proofs of Theorem~\ref{thm:intro main} and Theorem~\ref{introthm: res fin special main} appear in \S\ref{sec:proofs}.
In \S\ref{sec:applications} we discuss some applications to the theory of manifolds.
The Appendix \ref{sec:appendix} by Groves and Manning contains the proof that a group satisfying the conclusion of Theorem~\ref{thm:intro main} with respect to residually finite peripherals has separable full relatively quasiconvex subgroups.

\subsection*{Acknowledgements}
We would like to thank Igor Belegradek and Mike Davis for useful conversations, Corey Bregman and Alain Valette for their comments, and the referees for their careful reading and helpful remarks.
J.-F. Lafont was partly supported by the NSF Grant number DMS-2109683 and DMS-2407438. 
L. Ruffoni acknowledges partial support by INDAM-GNSAGA, the AMS and the Simons Foundation.


\section{Preliminaries}\label{sec:preliminaries}
\addtocontents{toc}{\protect\setcounter{tocdepth}{1}}
In this section we fix review some standard background and  terminology needed in this paper.

\subsection{Cell complexes}
For background on cell complexes we refer the reader to \cite{BH99}. 
A \textit{cell complex} is a topological space $X$ obtained by gluing together cells along their faces, in such a way that each cell embeds in $X$ and the intersection of any two cells is either empty or a cell.
A \textit{simplicial complex} is a cell complex obtained by gluing copies of the standard simplex $\triangle^n$.
A \textit{cubical complex} is a cell complex obtained by gluing copies of the standard cube $\square^n=[0,1]^n$. 
The \textit{dimension} of a cell complex is the maximum dimension of its cells. 
We say that an $n$-dimensional cell complex is \textit{homogeneous} if every cell is contained in a cell of dimension $n$, and that it is \textit{without boundary} if every $(n-1)$-cell is contained in at least two different $n$-cells.

A cell complex $X$ is \textit{piecewise} \textit{spherical}, \textit{Euclidean}, or \textit{hyperbolic} if its cells can be realized as totally geodesic cells in a round sphere $\mathbb S^n$, a Euclidean space $\mathbb E^n$, or the (real) hyperbolic space $\hh^n$, and the gluing maps can be realized by isometries.
If $X$ has finitely many isometry classes of cells then the metrics defined on the cells can be glued together and $X$ is a complete geodesic metric space (see \cite[Theorem I.7.50]{BH99}). 
In particular, if $X$ is simplicial or cubical then it carries a standard piecewise Euclidean metric. 
 
If $X$ and $Y$ are cell complexes, a continuous function $f:X\to Y$ is a \textit{combinatorial map} if for every cell $C$ of $X$ we have that $f$ is a homeomorphism from $C$ to a cell $f(C)$ of $Y$.
A simplicial $n$-dimensional complex $X$ is \textit{foldable} if it admits a combinatorial map $f:X\to \triangle^n$ which is injective on  each simplex.
Such a map will be called a \textit{folding} for $X$.
An analogous definition can be given in the cubical case in terms of a map  to $\square^n$.
The barycentric subdivision of any cell complex is a foldable simplicial complex.
Also note that the cells of a foldable complex  are necessarily embedded.

 The \textit{link} of a point $p$  in a cell complex $X$ is defined to be the space  of unit vectors at $p$ that point into the cells of $X$ that contain $p$, and is denoted $\lk pX$.
 Similarly, the \textit{link} of a cell $C$ in a cell complex $X$ is defined as the space of unit vectors  based at an interior point of $C$ which are orthogonal to $C$ and point into the cells that contain $C$. It is denoted $\lk CX$.
 Observe that links of points or cells are naturally piecewise spherical complexes, and that if $X$ is simplicial or cubical, then all links are simplicial.

Finally, note that the \textit{cone} $\cone X$ over a cell complex $X$ admits two natural topologies, namely the cone topology (i.e., the quotient topology coming from the quotient map $X\times [0,1]\to \cone X$ that defines the cone $\cone X$), and the metric topology (i.e., the one coming from the fact that $\cone X$ has a natural structure of a cell complex.)
If $X$ is finite, then the two topologies agree, but when $X$ is infinite the cone topology is much finer; in particular it is not first countable, hence not metrizable. 
Similarly, if $X$ is a cell complex and $Y$ is a subcomplex, then the \textit{relative cone} $\relcone XY$ obtained by attaching the cone over $Y$ to $X$ is endowed with the metric topology. 
We also say that $\relcone XY$ is obtained from $X$ by \textit{coning off} $Y$.


\subsection{Bounded curvature}
We will consider the usual notions of curvature for metric spaces, see \cite[\S II.1, \S III.H.1]{BH99} for details.
A space is \textit{non-positively curved} if it is locally $\cat 0$, and \textit{negatively curved} if it is locally $\cat k$ for some $k<0$. 
In this paper ``hyperbolic'' always means ``Gromov hyperbolic'' unless otherwise specified.
A $\cat 0$ space is contractible; a $\cat{k}$ space is hyperbolic as soon as $k<0$.
The action of a group on a metric space is \textit{geometric} if it is cocompact, proper, and isometric.
A group is $\cat 0$ (resp. hyperbolic) if it admits a geometric action on a $\cat 0$ (resp. hyperbolic) space.

We recall the following well-known characterization of non-positive curvature for cubical complexes, see \cite[Theorems II.5.20]{BH99}).
Recall that links in a cubical complex are simplicial.
A simplicial complex is \textit{flag} if any $k+1$ pairwise adjacent vertices span a $k$--simplex.

\begin{lemma}[Gromov's link condition]\label{lem:gromov link condition}
Let $X$ be a cubical complex. Then $X$ is non--positively curved if and only if the link of each vertex is flag. 
\end{lemma}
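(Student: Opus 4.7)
The plan is to reduce the global curvature condition to a local one via standard CAT(0) theory, then to a purely combinatorial condition via the special structure of links in cube complexes.

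First, since non-positive curvature is by definition a local condition (existence of a CAT(0) neighborhood around every point), I would invoke the general fact (see \cite[Theorem II.5.2]{BH99}) that for a metric polyhedral complex with finitely many isometry types of cells, being locally $\cat 0$ is equivalent to every link being $\cat 1$. Thus $X$ is non-positively curved if and only if $\lk{p}{X}$ is $\cat 1$ for every point $p$.

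Next I would reduce to checking links only at vertices. If $p$ lies in the interior of a cube $C$ of dimension $k\geq 1$, then $\lk{p}{X}$ is naturally isometric to the spherical join $\mathbb S^{k-1} * \lk{C}{X}$. Since a spherical join $Y_1 * Y_2$ is $\cat 1$ if and only if both factors are $\cat 1$ (see \cite[Theorem II.3.14]{BH99}), and since $\lk{C}{X}$ is itself a link of a vertex in a smaller cubical complex (obtained by passing to an iterated link), the condition reduces to the case where $p$ is a vertex.

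Now I would use the key structural fact: the link of a vertex in a cubical complex is an \emph{all-right spherical simplicial complex}, meaning a piecewise spherical simplicial complex in which every edge has length $\pi/2$ (each $k$-cube at a vertex contributes a spherical simplex with all edges of length $\pi/2$, namely an all-right spherical $(k-1)$-simplex). The core combinatorial step is then Gromov's theorem that an all-right spherical simplicial complex is $\cat 1$ if and only if it is flag. The forward direction is easy: if $k+1$ vertices are pairwise adjacent but do not span a simplex, one can exhibit a locally geodesic loop of length $<2\pi$ in the full subcomplex spanned by those vertices, violating $\cat 1$. The reverse direction is the main obstacle; I would prove it by induction on dimension, using the characterization that a piecewise spherical complex is $\cat 1$ if and only if every link is $\cat 1$ and there are no closed geodesics of length $<2\pi$. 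Flagness is inherited by links, so induction handles the link condition, while the all-right structure together with flagness forces any would-be short closed geodesic to project to a non-backtracking combinatorial loop whose length one can bound below by $2\pi$.

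Combining these three steps yields the equivalence: $X$ is non-positively curved iff every vertex link is $\cat 1$ iff every vertex link is flag.
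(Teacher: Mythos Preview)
Your proposal is correct and follows exactly the standard argument from \cite[Theorem II.5.20]{BH99}, which is precisely what the paper cites in lieu of a proof; the paper does not give its own argument for this lemma. Your three-step outline (reduce to links being $\cat 1$, reduce to vertex links via spherical joins, then invoke the all-right spherical characterization) is the proof in Bridson--Haefliger, so there is nothing to compare.
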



Finally, let us recall some standard terminology about relative hyperbolicity; see \cite{HR10} for details.
Let $G$ be a finitely generated group and let $\mathcal P$ be a finite collection of subgroups of $G$.
Given a Cayley graph $\Gamma$ for $G$, we can construct the \textit{coned-off Cayley graph} $\widehat \Gamma$ by adding a vertex $v_{gP}$ for each coset of a subgroup $P\in \mathcal P$ and attaching $v_{gP}$ to vertices in $gP$ with edges of length $\frac 12$.
We say $(G,\mathcal P)$ is \textit{relatively hyperbolic} if some (every) coned-off Cayley graph $\widehat \Gamma$ is hyperbolic and has the Bounded Coset Penetration property.
We will also say that $G$ is \textit{hyperbolic relative to} $\mathcal P$; see \cite{HR10} for details and other equivalent definitions.
A subgroup  $P\in \mathcal P$ is a \textit{peripheral subgroup} and $\mathcal P$ is a \textit{peripheral structure} on $G$.
A conjugate of a peripheral subgroup is a \textit{maximal parabolic subgroup}, and a \textit{parabolic subgroup} is a subgroup of a maximal parabolic subgroup.
Finally, we say that a subgroup $H\subseteq G$ is \textit{relatively quasiconvex} if it is quasiconvex in $\widehat \Gamma$, i.e., if there is constant $K\geq 0$ such that every geodesic in $\widehat \Gamma$ with endpoints in $H$ lies at distance at most $K$ from $H$.
(In our setting, this is equivalent to being quasiconvex in the \textit{relative Cayley graph}, obtained by adding $\mathcal P$ to the generating set. Indeed, these two graphs are quasi-isometric and induce the same metric on $G$, seen as a subset of the vertex set in each case.)

The following criterion for relative quasiconvexity is well-known to experts and follows from a relative version of Milnor--\v Svarc obtained by Charney and Crisp in \cite{CC07}.
We include a proof for the convenience of the reader.
Here we say an action is \textit{discontinuous} if orbits are discrete and an \textit{isotropy subgroup} is a subgroup with non-empty fixed set.

\begin{lemma}\label{lem:relative MilnorSvarc}
Let $X$ be a hyperbolic length space.
Let $G$ be a finitely generated group admitting a discontinuous, cocompact, isometric action on $X$.
Let $\mathcal P$ be a collection of subgroups of $G$ consisting of exactly one representative from each conjugacy class of the maximal isotropy subgroups for this action.
Assume that $G$ is relatively hyperbolic with respect to $\mathcal P$.
Let $Y\subseteq X$ be a quasiconvex subset. 
Let $H$ be the stabilizer of $Y$ in $G$. 
If $H$ acts cocompactly on $Y$, then $H$ is a relatively quasiconvex subgroup of $(G,\mathcal P)$.  
\end{lemma}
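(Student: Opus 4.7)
The plan is to invoke a relative version of the Milnor-\v{S}varc lemma, in the style of Charney and Crisp \cite{CC07}, to translate the problem into one about quasiconvex subsets of a coned-off version of $X$, where the hyperbolicity of $X$ and the assumed quasiconvexity of $Y$ can be leveraged directly.

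First, I would fix a basepoint $x_0 \in Y$ and form the space $\hat X$ obtained from $X$ by coning off each $G$-orbit of fixed points of the maximal isotropy subgroups. Under the hypotheses of the lemma (cocompact, discontinuous, isometric action with $\mathcal P$ representing the conjugacy classes of maximal isotropy subgroups, and $G$ relatively hyperbolic with respect to $\mathcal P$), the relative Milnor-\v{S}varc lemma provides a $G$-equivariant quasi-isometry $\Phi\colon \widehat\Gamma \to \hat X$ sending the vertex set $G$ coarsely to the orbit $G\cdot x_0$, and cone vertices to cone vertices. Under this correspondence, the subset $H \subset \widehat\Gamma$ is at bounded Hausdorff distance from $H\cdot x_0 \subset \hat X$, so relative quasiconvexity of $H$ is equivalent to quasiconvexity of $H\cdot x_0$ in $\hat X$.

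Next, I would observe that since $H$ acts cocompactly on $Y$, the orbit $H\cdot x_0$ has bounded Hausdorff distance from $Y$ inside $X$, hence also inside $\hat X$. The task therefore reduces to showing that $Y$ itself is quasiconvex in $\hat X$. To this end, I would take any $\hat X$-geodesic joining two points of $Y$, decompose it into $X$-segments separated by short detours through parabolic cone vertices, and straighten each detour into an $X$-geodesic to obtain a path $\gamma^*$ in $X$. Standard quasigeodesic stability in the hyperbolic space $X$ (once one knows $\gamma^*$ is a quasigeodesic) places $\gamma^*$ in a uniform neighborhood of the $X$-geodesic between the endpoints, which lies near $Y$ by hypothesis. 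Since each parabolic detour has diameter bounded by a universal constant in $\hat X$, the original $\hat X$-geodesic therefore stays near $Y$.

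The main obstacle will be justifying that the straightened path $\gamma^*$ is actually a quasigeodesic in $X$: this requires knowing that parabolic detours in $\hat X$-geodesics correspond to controlled excursions into horoball-like regions in $X$, which is essentially the Bounded Coset Penetration property transported across the Charney-Crisp identification. Once this technical ingredient is in place, the remainder is a routine combination of hyperbolic quasigeodesic stability with the definition of relative quasiconvexity.
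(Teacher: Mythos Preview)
Your approach takes an unnecessary detour and leaves a real gap at exactly the point you flag. The key observation you are missing is that the peripheral subgroups in $\mathcal P$ are \emph{isotropy} subgroups for the action on $X$: each $P\in\mathcal P$ already fixes a point of $X$, so every parabolic coset has bounded (indeed zero-diameter) image under the orbit map. This is precisely what Charney--Crisp's relative Milnor--\v{S}varc lemma needs in order to produce a quasi-isometry from $\widehat\Gamma$ directly to $X$ itself, not to any coned-off space $\hat X$. Once you have $f\colon \widehat\Gamma \to X$ a quasi-isometry, the argument is the standard one: a $\widehat\Gamma$-geodesic from $1$ to $h\in H$ maps to a quasigeodesic in $X$ with endpoints in $Y$; the Morse lemma in the hyperbolic space $X$ puts it near the $X$-geodesic; quasiconvexity of $Y$ puts that near $Y$; cocompactness of $H$ on $Y$ puts $Y$ near $f(H)$; and pulling back through $f$ gives quasiconvexity of $H$ in $\widehat\Gamma$.

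By contrast, your plan cones off $X$ (redundantly, since the parabolic orbits are already points), then tries to prove $Y$ is quasiconvex in $\hat X$ by straightening $\hat X$-geodesics back into $X$-paths $\gamma^*$. The step you identify as the ``main obstacle'' --- that $\gamma^*$ is an $X$-quasigeodesic --- is genuinely non-trivial: a short hop through a cone vertex in $\hat X$ can correspond to an arbitrarily long $X$-segment, and bounding this is essentially re-proving distortion estimates for peripherals that the direct approach avoids entirely. You acknowledge the gap but do not close it. The paper's route sidesteps all of this by never leaving $X$.
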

\proof
Fix a basepoint $p\in Y$.
It follows from the proof of \cite[Theorem 5.1]{CC07} that the orbit map
$G\to X, g\mapsto g.p$
is a quasi-isometry, when $G$ is equipped with the metric induced from $ \widehat{\Gamma}$.
Since every point of $ \widehat{\Gamma}$ is at distance at most $\frac 12$ from $G$, this  orbit map extends to  a $(\lambda,\varepsilon)$-quasi-isometry
$f: \widehat \Gamma \to X$
for some constants $\lambda \geq 1, \varepsilon>0$.
Now the proof proceeds as in the absolute case.

Let $\delta$ be the hyperbolicity constant of $X$ and $K$ the quasiconvexity constant of $Y$.
Now pick $h\in H$ and a geodesic $\gamma:I\to \widehat \Gamma $ from $1_G $ to $h$. 
Then $f(\gamma)$  is a $(\lambda,\varepsilon)$-quasi-geodesic in $X$ from $p$ to $f(h)=h.p$.
Let $\alpha$ be a geodesic path in $X$ from $p$ to $f(h)=h.p$.
By the Morse Lemma (see for instance \cite[Theorem III.H.1.7]{BH99}) we get that $f(\gamma) \subseteq \mathcal N_A(\alpha)$ for some constant $A=A(\delta,\lambda, \varepsilon)$. Moreover since $p,h.p\in Y$ and $Y$ is $K$-quasiconvex, we get also that $\alpha\subseteq \mathcal N_K(Y)$. 
Finally since $H$ acts cocompactly on $Y$, there is some $F>0$ such that $Y \subseteq \mathcal N_F(f(H))$.
Combining these statements we see that $f(\gamma) \subseteq \mathcal N_B(f(H))$ for some $B=B(\delta,K,\lambda, \varepsilon,F)$.

In particular, for each $x\in \gamma$ there exists some $h_x\in H$ such that $d_X(f(x),f(h_x))\leq B$.
But then, since $f$ is a $(\lambda,\varepsilon)$-quasi-isometric embedding, we also get
$$d_{\widehat \Gamma}(x,h_x)\leq \lambda ( d_X(f(x),f(h_x)) +\varepsilon)\leq \lambda (B+\varepsilon)$$
which shows that there exists a constant $C=C(\delta,K,\lambda, \varepsilon,F)$ such that $\gamma \subseteq \mathcal N_C(H)$, i.e. $H$ is $C$-quasiconvex in $\widehat \Gamma$.
\endproof

In particular, when $Y$ is just the fixed point of a maximal parabolic subgroup one recovers the fact that each maximal parabolic subgroup is relatively quasiconvex.

Finally, we will need the following definitions.
A subgroup $H \subseteq G$  is \textit{full relatively quasiconvex} if it is relatively quasiconvex and for any $g\in G$ and for any peripheral subgroup $P\in \mathcal P$ the intersection $H \cap g P g^{-1}$ is either finite or of finite index in $g P g^{-1}$.
For instance, since the collection of maximal parabolic subgroups is almost malnormal,  a maximal parabolic subgroup is automatically a full relatively quasiconvex subgroup.
A subgroup $H \subseteq G$  is \textit{strongly relatively quasiconvex} if it is relatively quasiconvex and for any $g\in G$ and for any peripheral subgroup $P\in \mathcal P$ the intersection $H \cap g P g^{-1}$ is finite.



\section{Relative strict hyperbolization}\label{sec:rel hyp}
\addtocontents{toc}{\protect\setcounter{tocdepth}{2}}
Let $K$ be a finite connected simplicial complex, and $L$ a (not necessarily connected) subcomplex.
The main example the reader should keep in mind is when $K$ is a compact manifold with boundary $L$.

\subsection{The basic spaces}\label{sec: basic spaces}
We consider the relative hyperbolization procedure of \cite[\S 2]{DJW01}, as well as its strict version, see \cite{BE07}. See Figure~\ref{fig:rel hyp} for a picture of the main steps involved in the construction.
For an exposition of Gromov's cylinder construction and Charney-Davis strict hyperbolization we refer the reader to \cite{LR24} and references therein.

\begin{figure}[h]
\centering
\def\svgwidth{.7\columnwidth}
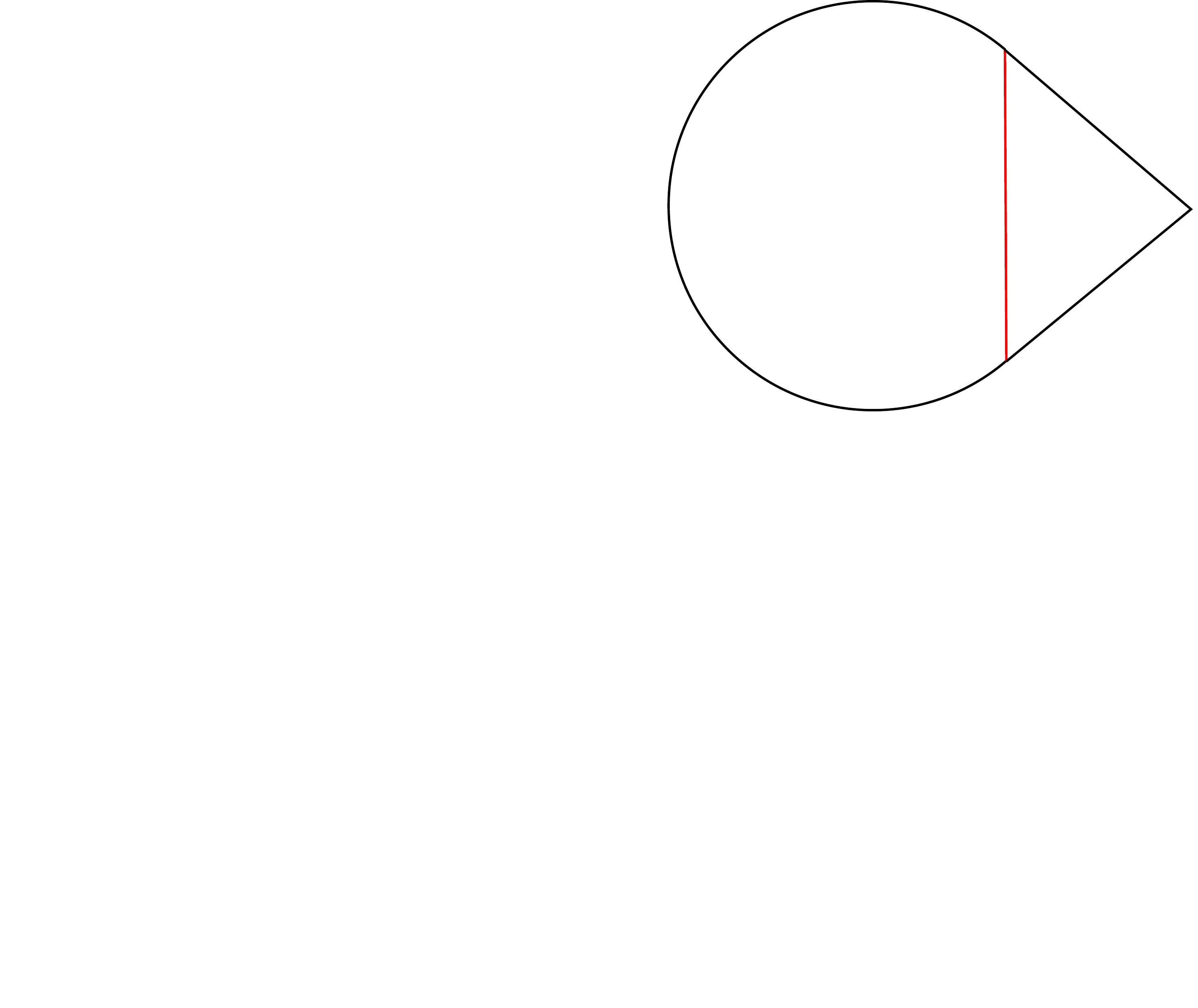
    \caption{The relative strict hyperbolization procedure.}
    \label{fig:rel hyp}
\end{figure}

    Let $\relcone KL$ be the simplicial complex obtained by attaching a cone $C_i$ over each connected component $L_i$ of $L$. 
    Denote by $y_i$ the cone point.
    We will  assume that $\relcone KL$ is foldable, which can always be achieved by taking a barycentric subdivision of $\relcone KL$.
    
    Let $X = \gromov {\relcone KL}$ be the cubical complex obtained by applying Gromov's cylinder construction. 
    Then $X$ is a non-positively curved and foldable cubical complex.
    Cone points arising from the cone points of $\relcone KL$ are still denoted $y_i$.
    
    Let $\hc = \hyperbolize{X}= \hyperbolize{\gromov {\relcone KL}}$ be the piecewise hyperbolic complex obtained by applying the Charney-Davis strict hyperbolization from \cite{CD95} to $X = \gromov {\relcone KL}$. 
    Roughly speaking, this is obtained by replacing each cube of $X$ with a certain hyperbolic manifold with corners  $\hq$.
    We still denote by $y_i$ the points arising from the cone points.
    Links are essentially preserved, in the sense that $\lk{y_i}{\hc}$ is isomorphic to a subdivision of $\lk{y_i}{\relcone KL}=L_i$.

    Let $\relhyp KL$ be the space obtained from $\hc$ by removing a small open  ball around each $y_i$.
    This is the \textit{relative strict  hyperbolization} of $K$ with respect to $L$.
    Note that $L$ embeds (up to subdivision) as a  subcomplex of $\relhyp KL$.
    The following are the  main features of this procedure, see \cite{DJW01,BE07}.
    \begin{enumerate}
        \item $\relhyp KL$ is aspherical if and only if each  component of $L$ is aspherical.
    
        \item If $K$ is a PL manifold with boundary $L$, then $\relhyp KL$ is a PL manifold with boundary $L$.
        The same is true in the smooth category, if one works with smooth triangulations.

        \item Each component $L_i$ is $\pi_1$-injective.
        If $\mathcal P$ denotes a set of representatives of the conjugacy classes of the subgroups $\pi_1(L_i)$, then Belegradek proved in  \cite{BE07} that $G=\pi_1(\relhyp KL)$ is   hyperbolic relative to  $\mathcal P$.
        Note that since $K$ is finite, $\mathcal P$ is finite and $P\in \mathcal P$ is finitely presented.
    \end{enumerate}

\begin{remark}
$\hc$ is negatively curved, but in general the metric induced on $\relhyp KL$ is not even non-positively curved.
\end{remark}

\begin{remark}
 If $K$ is compact and homogeneous, and $L=\partial K$, then $\pi_1(\hc)$ is hyperbolic and virtually compact special by \cite{LR24}. However, $\relhyp KL$ is not $\pi_1$-injective in $\hc$, so $G=\pi_1(\relhyp KL)$ is not naturally a subgroup of $\pi_1(\hc)$.
\end{remark}

\begin{remark}[A standing assumption]\label{rem:assumption}
To use the techniques from \cite{LR24} we need one additional assumption, namely that the pair $(K,L)$ is such that $\relcone KL$ is homogeneous and without boundary.
This condition ensures that $\gromov{\relcone KL}$ is an \textit{admissible} cubical complex in the sense of \cite[\S 3]{LR24}.
We will assume this condition in this paper.
In particular, $X=\gromov{\relcone KL}$ comes with a folding $f:X\to \square^n$.
\end{remark}

\begin{example}
The motivating case is when $K$ is a compact manifold with boundary $L$.
More generally, $K$ could be any homogeneous complex with boundary $L=\partial K$, or we could take $K$ to be homogeneous and without boundary and $L$ to be a homogeneous codimension-1 subcomplex without boundary.
\end{example}

\subsection{Some useful covering spaces}\label{sec:auxiliary covering spaces}
Our goal is to construct a nice action of $G=\pi_1(\relhyp KL)$ on a $\cat 0$ cubical complex.
We follow \cite{LR24} and construct a  cubical complex dual to a certain collection of subspaces (mirrors) defined on a suitable cover of $\hc$.
Some adaptations are needed to work relatively to $L$.
The idea is to follow the treatment in \cite[\S 2]{DJW01}; see Figure~\ref{fig:covers}.

\begin{figure}[h]
\centering
\def\svgwidth{\columnwidth}
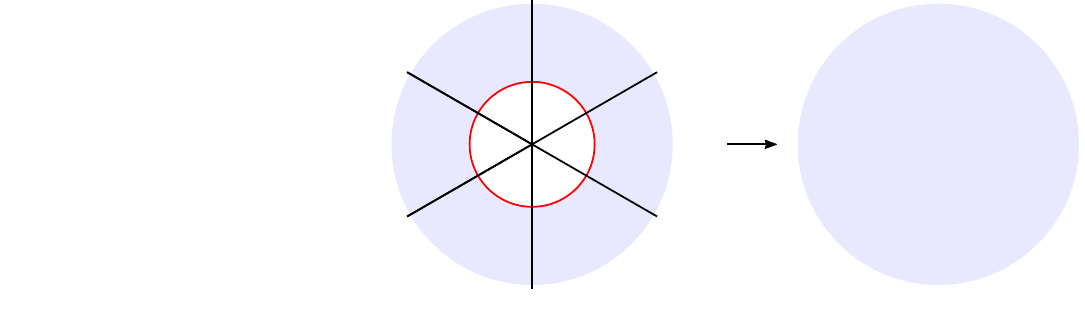
    \caption{Some covering spaces, local pictures around a cone/branch point.}    
    \label{fig:covers}
\end{figure}

    Let $\uchc$ be the universal cover of $\hc$.
    Note that the link of each lift of a cone point $y_i$ is naturally identified with a subdivision of $L_i$.
    
    Let $\icrhc KL$ be the lift of $\relhyp KL$ to $\uchc$.
    This is a covering space of $\relhyp KL$, but it is in general not simply connected.
    Indeed, there is an injection $\pi_1(L_i)\to \pi_1(\icrhc KL)$, because there is a retraction $\icrhc KL \to \lk{y_i}{\uchc} \cong L_i$ for each cone point $y_i$ 
    (see \cite[Lemma 2.2]{DJW01}).

    Let $\buchc$ be the \textit{branched universal cover} of $\hc$, i.e the space obtained by puncturing $\uchc$ at all the cone points, taking the universal cover, and then taking the metric completion.
    The ideal points in the completion are isolated points, covering the cone points, and we call them \textit{branch points}.
    The space $\buchc$ is a piecewise hyperbolic complex, which is locally isometric to $\uchc$, except at the branch points. 
    The link of a branch point above a cone point $y_i$ is isomorphic to a subdivision of the universal cover $\widetilde L_i$ of $L_i$.
    Since $\uchc$ is $\cat{-1}$, we have that $L_i$ is $\cat 1$, and therefore $\widetilde L_i$ is $\cat 1$. It follows that $\buchc$ is a    $\cat{-1}$ piecewise hyperbolic complex; see \cite[\S II.5]{BH99} for details.

     Finally, let $\ucrhc KL$ be the lift of $\icrhc KL$ to $\buchc$.
     Note that $\widetilde L_i$ is simply connected and each branch point has a simply connected neighborhood.
     By Seifert--Van Kampen we get that $\ucrhc KL$ is simply connected, so $\ucrhc KL$ is the universal cover of $\relhyp KL$.

\begin{remark}\label{rem:cover is coned}
The space $\buchc$ is homeomorphic to the space obtained from the universal cover $\ucrhc KL$ by coning off each copy of the universal cover $\widetilde L_i$ of $L_i$.
Moreover, there is a natural continuous map $\pibuchc: \buchc \to \uchc$. This is a covering map in the complement of the branch points.
\end{remark}

 
As a result of the above construction, we obtain a \textit{folding map}

$$
    \foldbuchc: \buchc \overset{\pibuchc}{\to} \uchc \overset{\pi}{\to} \hc \overset{\cdX}{\to} X \overset{f}{\to} \square^n.
$$
As   in \cite{LR24}, we define a \textit{mirror} in $\buchc$, $\uchc$ and $\hc$ to be a connected component of the full preimage of a codimension-1 face of $\square^n$ via this folding map.
The collection of mirrors induces a stratification of $\buchc$, and we can define a $k$-\textit{cell} to be the closure of a connected component of the full preimage of an open $k$-face of $\square^n$.
In particular, a $0$-cell  is a vertex, a $1$-cell is an edge, and an $n$-cell  is called a \textit{tile}, and is isometric to the universal cover $\uchq$ of $\hq$ (recall this is the hyperbolic manifold with corners constructed in \cite{CD95} to define the strict hyperbolization procedure).
This provides a \textit{stratification} and a \textit{tiling} of $\buchc$, analogous to the ones obtained for $\uchc$ in \cite{LR24}.

Any mirror going through a branch point might have some non locally finite behavior, but the collection of mirrors itself is locally finite: at most $n=\dim X$ mirrors go through each point of $\buchc$.


\begin{remark}[The case of $L$ with simply connected components]\label{rem:easy case}
 The link of a cone point of $\hc$ is  homeomorphic to the corresponding component of $L$.
In particular, if each component  of $L$ is simply connected, then each cone point has a simply connected link and a simply connected  neighborhood (namely, the cone over the link).
It follows from the Seifert-Van Kampen theorem that puncturing $\hc$ at the (finitely many) cone points does not change the fundamental group, i.e., the inclusion $\relhyp KL \subseteq \hc$  induces an isomorphism on fundamental groups. 
In particular, the group $G=\pi_1(\relhyp KL) = \pi_1(\hc)$ is hyperbolic and virtually compact special by \cite{LR24}.

For a motivating example, let $K$ be a manifold whose boundary $L=\partial K$ has simply connected components.
In \cite{RU23} this set up was considered to construct manifolds with hyperbolic fundamental group that do not admit any real projective or flat conformal structures, in any dimension at least $5$.
\end{remark}


\subsection{The action of \texorpdfstring{$G$}{G} on \texorpdfstring{$\buchc$}{the branched cover}}
Recall from \S\ref{sec: basic spaces} that if $\mathcal P$ is a set of representatives of the conjugacy classes of the subgroups $\pi_1(L_i)$, where $L_i$ is a connected component of $L$, then the group $G=\pi_1(\relhyp KL)$ is hyperbolic relative to $\mathcal P$.
Moreover, $G$ naturally acts on $\ucrhc KL$ by deck transformations.
As noted in Remark~\ref{rem:cover is coned} this space embeds in $\buchc$, the complement being given by the cones over the various copies of the universal cover $\widetilde L_i$ of $L_i$.
Note that the action permutes these subspaces  $\widetilde L_i$, and each maximal parabolic subgroup stabilizes one of them.

We extend the action of $G$ to the entire $\buchc$ by defining it on the cones in the obvious way.
The resulting action is cocompact and by isometries, but not proper, because each maximal parabolic fixes a branch point.
To address the failure of properness, we now study the cell stabilizers for the action of $G$ on $\buchc$.

\begin{lemma}\label{lem:stab cell}
Let $\sigma$ be a cell of $\buchc$.
\begin{enumerate}
    \item \label{item:stab cell branched} If $\sigma$ is  a branch point, then $\stab{G}{\sigma}$ is a  maximal parabolic subgroup.

    \item \label{item:stab cell unbranched} If $\sigma$ is not a branch point, then $\stab{G}{\sigma}$ is a hyperbolic and virtually compact special group.
\end{enumerate}

\end{lemma}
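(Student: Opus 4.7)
The plan is to handle the two cases separately, leveraging the cone description of $\buchc$ over $\ucrhc KL$ together with the equivariance properties of the folding $\foldbuchc$.

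For (1), I use Remark~\ref{rem:cover is coned}: $\buchc$ is obtained from $\ucrhc KL$ by coning off each lifted copy of $\widetilde L_i$, and the branch points are exactly the resulting cone points. Since the $G$-action on $\buchc$ is defined to extend the deck action on $\ucrhc KL$ by acting on each cone through its base, the stabilizer of a branch point $b$ coincides with the $G$-stabilizer of the associated copy of $\widetilde L_i$ inside $\ucrhc KL$. Because $L_i\hookrightarrow \relhyp KL$ is $\pi_1$-injective, this setwise stabilizer is a conjugate of $\pi_1(L_i)$, and therefore a maximal parabolic subgroup by the choice of $\mathcal P$.

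For (2), I would show the stronger statement that $\stab G \sigma$ is in fact trivial, which is certainly hyperbolic and virtually compact special. Three inputs drive the argument: (a) $G$ acts freely on $\ucrhc KL$, as its deck transformation group; (b) the folding $\foldbuchc = f\circ \cdX\circ \pi\circ \pibuchc$ is $G$-invariant, because $\pi\circ \pibuchc \colon \buchc\to \hc$ realizes the quotient of $\buchc$ by the $G$-action; and (c) $\foldbuchc$ is injective on each cell of $\buchc$, since cells are defined as closures of connected components of preimages and each intermediate map can be checked to embed the relevant cells. Given (b) and (c), any $g\in \stab G \sigma$ preserves every fiber of $\foldbuchc|_\sigma$; since such fibers are singletons by (c), $g$ fixes $\sigma$ pointwise. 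As $\sigma$ is not a branch point, it contains at least one non-branch point (either $\sigma$ itself if it is a non-branch vertex, or an interior point if $\dim\sigma>0$), which lies in $\ucrhc KL$. Then (a) forces $g=e$, so $\stab G \sigma$ is trivial.

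The most delicate step is (c), namely that the cells of $\buchc$ embed under $\foldbuchc$ even when they touch branch points. This reduces to verifying that each tile of $\buchc$ is mapped homeomorphically by $\pibuchc$ onto the corresponding tile of $\uchc$ (so that the branching at a corner is a one-to-one identification inside the tile, rather than a folding), and then using the foldability of $X$ to obtain injectivity of $f\circ \cdX$ on each cell of $\hc$. Should injectivity of the folding on some higher-dimensional cell be problematic, one can instead analyze the induced action of $\stab G \sigma$ on the link of a branch point in $\partial\sigma$ and appeal to the free action of the corresponding peripheral $\pi_1(L_i)$ on $\widetilde L_i$ to reach the same conclusion.
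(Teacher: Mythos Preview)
Your argument for part~(1) is fine and matches the paper. The problem is part~(2): the claim that $\stab G\sigma$ is trivial is \emph{false}, and the error is precisely step~(c). The folding $\foldbuchc$ is \emph{not} injective on cells. An $n$--cell (tile) of $\buchc$ is isometric to $\uchq$, the universal cover of the Charney--Davis piece $\hq$, which is a non-compact hyperbolic manifold; such a tile cannot embed in $\square^n$. Concretely, the intermediate map $\pi\co\uchc\to\hc$ sends each tile $\uchq$ onto the compact tile $\hq$ by the universal covering map, so your assertion that ``each intermediate map can be checked to embed the relevant cells'' already fails here. Correspondingly, the stabilizer of a tile acts freely and cocompactly on $\uchq$ with quotient $\hq$, hence is isomorphic to $\pi_1(\hq)$, a highly nontrivial hyperbolic group. (Compare Lemma~\ref{lem:quasiconvex_stabilizers}, where it is noted that the quotient of a cell by its stabilizer is a face of $\hq$.) Your backup plan via links of branch points cannot help either, since the conclusion you are aiming for is simply not true.

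The paper's approach is different: rather than folding to $\square^n$, one folds $\buchc$ to $\hq$ and observes that cell stabilizers inject onto quasiconvex subgroups of $\hgq=\pi_1(\hq)$. Since $\hgq$ is hyperbolic and virtually compact special, its quasiconvex subgroups inherit both properties. So the correct statement is that non-branch cell stabilizers are (typically nontrivial) quasiconvex subgroups of $\hgq$, not that they are trivial.
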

\begin{proof}
Since \eqref{item:stab cell branched} follows directly from the construction of the space $\buchc$, we just prove \eqref{item:stab cell unbranched}.
Following the second part of the proof of  \cite[Lemma 5.13]{LR24}, we have that $\buchc$ folds to $\hq$, and cell stabilizers map isomorphically to quasiconvex subgroups of $\hg = \pi_1(\hq)$. 
This group is hyperbolic and virtually compact special (see \cite[Lemma 5.12]{LR24}), so cell stabilizers are hyperbolic and virtually compact special too (see \cite[Lemma 5.10]{LR24}).
\end{proof}

\begin{lemma}\label{lem:trivial int stab}
Let $\sigma$ be an $n$-cell of $\buchc$ which is not a branch point, and let $\hat y$ be a branch point of $\buchc$. Then $\stab G\sigma \cap \stab{G}{\hat y} =1$.
\end{lemma}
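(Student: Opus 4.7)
The plan is to combine the injection $\stab G \sigma \hookrightarrow \hg$ from Lemma~\ref{lem:stab cell} with the fact that $\hg = \pi_1(\hq)$ acts freely on its universal cover $\uchq \cong \sigma$. Concretely, I will show that any $g \in \stab G \sigma \cap \stab G {\hat y}$ has a fixed point inside $\sigma$; transporting this fixed point via the fold $\buchc \to \hq$, the corresponding element $\bar g \in \hg$ fixes a point in $\uchq$, and freeness of the $\hg$-action then forces $\bar g = 1$, hence $g = 1$.

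To exhibit a fixed point of $g$ in $\sigma$, I split into two cases. If $\hat y \in \sigma$, then $\hat y$ itself is fixed by $g$ and lies in $\sigma$, and we are done. Otherwise, the key observation is that $\sigma$, being isometric to $\uchq$ and totally geodesic in $\buchc$, is a closed convex subspace of the complete $\cat{-1}$ space $\buchc$. By convexity, the nearest-point projection of $\hat y$ onto $\sigma$ is a single point $p_0 \in \sigma$. Since $g$ is an isometry that fixes $\hat y$ and preserves $\sigma$ setwise, it preserves the restriction of $d(\hat y,\cdot)$ to $\sigma$ and therefore fixes its unique minimizer $p_0$.

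The conclusion then follows from the construction underlying Lemma~\ref{lem:stab cell}: the restriction of the fold $\buchc \to \hq$ to the tile $\sigma$ is precisely the universal covering $\uchq \to \hq$, and the injection $\stab G \sigma \hookrightarrow \hg$ sends $g$ to the unique deck transformation $\bar g$ of $\uchq$ that reproduces its action on $\sigma$. Hence $\bar g$ fixes the image of $p_0$ in $\uchq$; as $\hg$ acts freely on $\uchq$, we deduce $\bar g = 1$, and by injectivity of $\stab G \sigma \to \hg$ we obtain $g = 1$.

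The step I expect to require the most care is justifying convexity of the tile $\sigma$ in $\buchc$, needed for uniqueness of the nearest-point projection. This should follow from the fact that the hyperbolic blocks $\hq$ in the Charney--Davis construction glue along totally geodesic faces, so that tiles in $\buchc$ are totally geodesic $\cat{-1}$ subspaces, hence convex. If this point is delicate, one can bypass it by an alternative link-based argument: $g$ acts as an elliptic isometry of $\buchc$ with fixed point $\hat y$, and the induced action on $\lk{\hat y}{\buchc}\cong \widetilde L_i$ is by an element of $\pi_1(L_i)=\stab G {\hat y}$ acting freely, so the initial direction of any geodesic from $\hat y$ landing in $\sigma$ must be fixed by $g$, forcing $g=1$.
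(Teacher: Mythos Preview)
Your main argument is correct and shares the two-case structure of the paper's proof (nearest-point projection when $\hat y\notin\sigma$).  The difference is in the conclusion mechanism: the paper observes that the $G$--action on $\buchc$ is free away from branch points, so it must produce a fixed point that is \emph{not} a branch point; this forces the extra work in their Case~2 (taking the barycenter of the $0$--cells of $\sigma$ nearest to $\hat y$).  You instead route the conclusion through the injection $\stab G\sigma\hookrightarrow\hg$ from Lemma~\ref{lem:stab cell} and the freeness of the deck action of $\hg$ on $\uchq\cong\sigma$.  This buys you a cleaner Case~2: $\hat y$ itself already serves as the required fixed point in $\sigma$, with no barycenter construction needed.  (Note that the injectivity you invoke ultimately rests on the same freeness-away-from-branch-points fact, so the two arguments are close cousins.)

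Your concern about convexity of tiles is legitimate but not specific to your approach: the paper's proof uses it in exactly the same place.  It is established in \cite{LR24} and is used elsewhere in the paper (see e.g.\ the proof of Lemma~\ref{lem:quasiconvex_stabilizers}).

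The alternative link-based argument you sketch at the end does not actually bypass convexity when $\hat y\notin\sigma$.  You assert that ``the initial direction of any geodesic from $\hat y$ landing in $\sigma$ must be fixed by $g$'', but $g$ only permutes such directions; to single one out you need the nearest point on $\sigma$ to be unique, which is precisely the convexity you were trying to avoid.  (When $\hat y\in\sigma$ the link argument does work---$g$ preserves the simplex in $\lk{\hat y}{\buchc}$ corresponding to $\sigma$, hence fixes its barycenter---but your main argument already handles that case.)
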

\begin{proof}
Let $g\in \stab G\sigma \cap \stab{G}{\hat y}$.
We distinguish two cases, but in each case we construct a fixed point for $g$ in $\buchc$ which is not a branch point.
Since the action of $G$ is free away from branch points, this forces $g=1$.

Case 1: $\hat y \not \in \sigma$. Since $\sigma$ is closed and convex in the $\cat{-1}$ space $\buchc$, we can consider the nearest point projection $\pi_\sigma(\hat y)$ of $\hat y$ to $\sigma$. Note that  $\pi_\sigma$ is $g$-equivariant, because $g\in \stab G\sigma$. Hence $g$ fixes $\pi_\sigma (\hat y)$.

Case 2: $\hat y  \in \sigma$. 
Let $p_1,\dots,p_m$ be the $0$-cells of $\sigma$ which are adjacent to $\hat y$ and of minimal distance from $\hat y$ ($m\leq n$).
Then $g$ permutes this collection of points.
Let $z$ be their barycenter inside $\sigma$.
Then $g$ fixes $z$.
\end{proof}


Finally, we show that stabilizers are full relatively quasiconvex subgroups of $(G,\mathcal P)$; see \S\ref{sec:preliminaries} for definitions.

\begin{lemma}\label{lem:quasiconvex_stabilizers}
Let $\sigma$ be a cell of $\buchc$. Then $\stab{G}{\sigma}$ is a full relatively quasiconvex subgroup of $(G,\mathcal P)$. 
\end{lemma}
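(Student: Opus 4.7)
My plan is to split the proof on whether $\sigma$ is a branch point. If it is, Lemma~\ref{lem:stab cell}(1) identifies $\stab{G}{\sigma}$ as a maximal parabolic subgroup, and the discussion at the end of Section~\ref{sec:preliminaries} already records that such subgroups are automatically full relatively quasiconvex: relative quasiconvexity follows from Lemma~\ref{lem:relative MilnorSvarc} applied to their fixed branch point, and fullness follows from almost malnormality of the peripheral structure. So it remains to handle the case when $\sigma$ is not a branch point.

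For this case I would obtain relative quasiconvexity by applying Lemma~\ref{lem:relative MilnorSvarc} with $X=\buchc$ and $Y=\sigma$. The space $\buchc$ is CAT$(-1)$, hence a hyperbolic length space. The $G$-action on it is isometric, cocompact with quotient $\hc$, and discontinuous since the action is free off the isolated branch points while each branch point has a maximal parabolic stabilizer acting discontinuously on a cone neighborhood. By Lemma~\ref{lem:stab cell}(1) the maximal isotropy subgroups are exactly the branch point stabilizers, whose conjugacy classes are represented by $\mathcal{P}$, and $(G,\mathcal{P})$ is relatively hyperbolic by \cite{BE07}. The cell $\sigma$ is a closed convex subset of $\buchc$, as already used in the proof of Lemma~\ref{lem:trivial int stab}, so in particular it is quasiconvex. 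Cocompactness of the $\stab{G}{\sigma}$-action on $\sigma$ follows because $\sigma/\stab{G}{\sigma}$ surjects onto the image of $\sigma$ under the quotient map $\buchc\to\hc$, which is a compact cell. Lemma~\ref{lem:relative MilnorSvarc} then yields that $\stab{G}{\sigma}$ is relatively quasiconvex in $(G,\mathcal{P})$.

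For fullness I would need to bound $\stab{G}{\sigma}\cap gPg^{-1}$ for every $g\in G$ and $P\in\mathcal{P}$. Since each such conjugate equals $\stab{G}{\hat y}$ for some branch point $\hat y$, I would show $\stab{G}{\sigma}\cap\stab{G}{\hat y}$ is finite by running the proof of Lemma~\ref{lem:trivial int stab} in slightly greater generality. Whether $\hat y\notin\sigma$ (nearest point projection) or $\hat y\in\sigma$ (barycenter of minimal-distance adjacent $0$-cells), each element of the intersection fixes a point $p\in\sigma$. If $p$ is not a branch point, freeness of the $G$-action away from branch points forces the element to be trivial; if $p$ is a branch point distinct from $\hat y$, the element lies in the intersection of two distinct maximal parabolic subgroups, which is finite by almost malnormality. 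The main obstacle I expect is adapting the projection and barycenter arguments of Lemma~\ref{lem:trivial int stab} from $n$-cells to arbitrary non-branch cells, and carefully absorbing via almost malnormality the case where the fixed point produced is itself a branch point lying in $\sigma$.
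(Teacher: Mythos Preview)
Your proposal is correct and follows essentially the same route as the paper: split on whether $\sigma$ is a branch point, invoke Lemma~\ref{lem:relative MilnorSvarc} with $Y=\sigma$ for relative quasiconvexity, and obtain fullness from the triviality of $\stab{G}{\sigma}\cap\stab{G}{\hat y}$. The only difference is that the paper simply cites Lemma~\ref{lem:trivial int stab} directly for an arbitrary non-branch cell (the quotient of $\sigma$ by its stabilizer being identified with a face of $\hq$), whereas you flag that the lemma is stated for $n$--cells and propose to redo the argument; your extra care is reasonable, and the anticipated complication (the produced fixed point being itself a branch point) is harmless, since any element fixing two distinct branch points fixes the geodesic between them, which contains non-branch points, forcing the element to be trivial.
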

\proof
By \eqref{item:stab cell branched} in Lemma~\ref{lem:stab cell}, if $\sigma$ is a branch point, then $\stab{G}{\sigma}$ is a maximal parabolic subgroup of $(G,\mathcal P)$ and the statement follows.
So, let us assume that $\sigma$ is not a branch point.
Notice that the action of $G$ on $\buchc$ fits in the setting of Lemma~\ref{lem:relative MilnorSvarc}.
Indeed, it is an action by deck transformations in the complement of the branch points.
Moreover, $\sigma$ is convex in $\buchc$, and $\stab{G}{\sigma}$ acts on it cocompactly (the quotient is a face of $\hq$, which is a compact manifold with boundary).
Therefore, $\stab{G}{\sigma}$ is a relatively quasiconvex subgroup of $(G,\mathcal P)$.
Finally, let $P \in \mathcal P$ be any peripheral subgroup and $g\in G$.
Then  $gPg^{-1}=\stab{G}{\hat y}$ for some branch point $\hat y$, so we conclude by Lemma~\ref{lem:trivial int stab}.
\endproof

\begin{remark}\label{rem: strong 0}
    The above proof actually shows that if a cell stabilizer is not maximal parabolic then it is strongly relatively quasiconvex.
\end{remark}


\section{The dual cubical complex}\label{sec:dual complex}
Following \cite{LR24} we construct a cubical complex that is dual to the stratification induced by the collection of mirrors. This is defined as follows; see Figure~\ref{fig:dccx}.
\begin{itemize}
    \item vertices are $k$-cells in the stratification of $\buchc$,
    
    \item edges represent codimension-1 inclusions,
    
    \item higher-dimensional cubes are glued in whenever their 1-skeleton appears.
\end{itemize}
The resulting cubical complex is denoted by  $\dcc \buchc$.
It comes with a \textit{height} function on the 0-skeleton, recording the dimension of the dual cell.
The action of $G=\pi_1(\relhyp KL)$ on $\buchc$ extends to an action on $\dcc \buchc$ by cubical isometries.
Since $K$ is finite, the action is cocompact. However, it is not proper.
Note that running the same construction on $\uchc$ leads to the $\cat 0$ cubical complex $\dccx$ studied in \cite{LR24}.

\begin{figure}[h]
\centering
\def\svgwidth{.7\columnwidth}
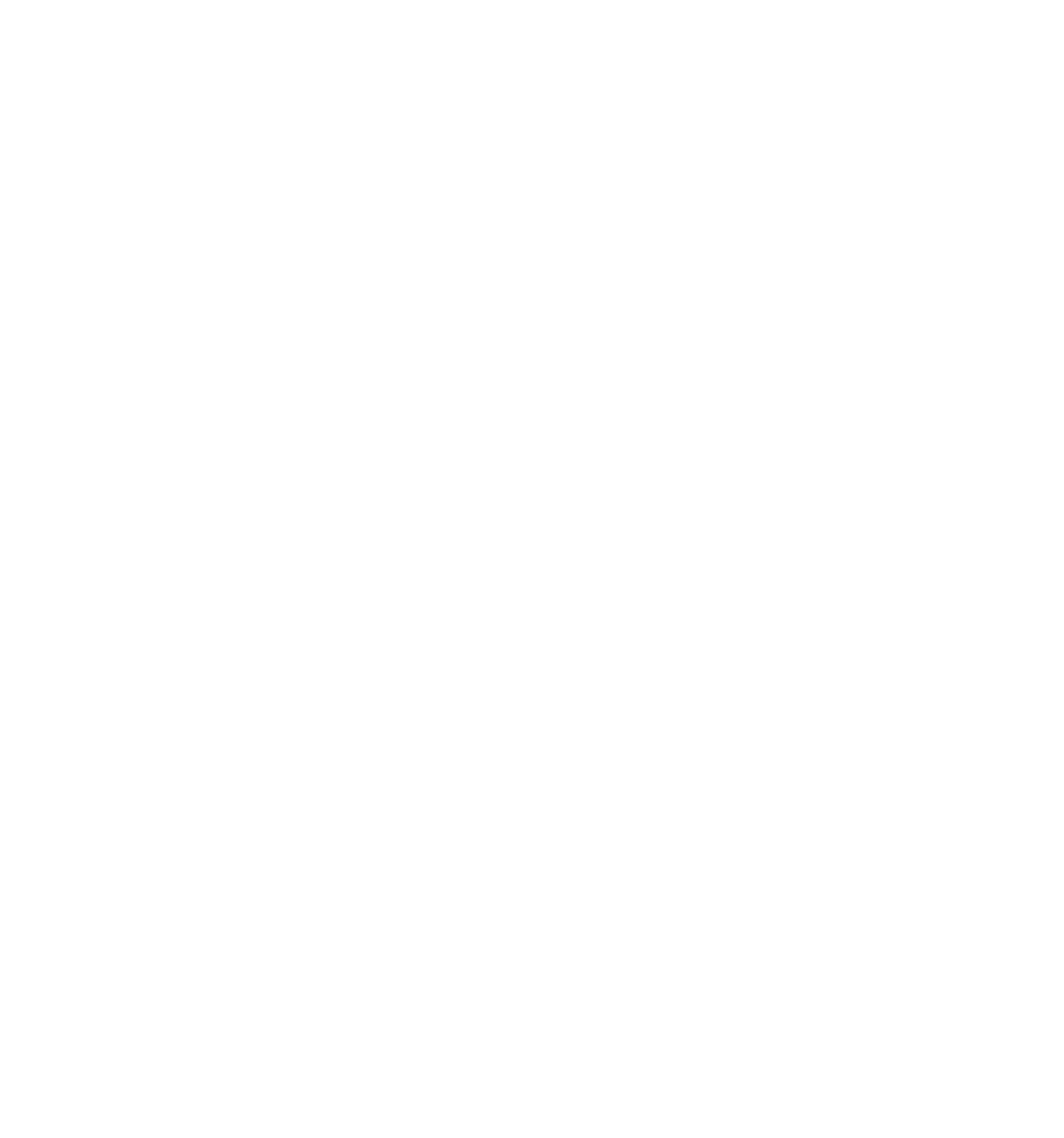
    \caption{The dual cubical complex $\dcc \buchc$ superimposed on the stra\-ti\-fi\-ca\-tion of $\buchc$. Key: $\fullmoon$, $\astrosun$, and $\newmoon$  denote a vertex of height $0$, $1$, and $2$.}    
    \label{fig:dccx}
\end{figure}

\begin{remark}[The case of $L$ with simply connected components, continued]\label{rem:easy case 2}
 An argument analogous to the one in Remark~\ref{rem:easy case} shows that if each component of $L$ is simply connected, then puncturing $\uchc$ at its cone points does not change the fundamental group. 
 Therefore $\buchc = \uchc$, and it follows that  $\dcc \buchc = \dccx$, which is known to be $\cat 0$ by \cite{LR24}.
\end{remark}

Vertices of height at least $2$ in $\dcc \buchc$  do not admit compact neighborhoods.
The vertex dual to a branch point will be called a \textit{branch vertex} of $\dcc \buchc$. 
Branch vertices have height $0$ and are the only vertices of height less than $2$ that do not have compact neighborhoods.
Let $\bv$ denote the collection of branch vertices of $\dcc \buchc$.
Analogously, let $\cv$ denote the collection of \textit{cone vertices} of $\dccx$, i.e. the vertices of $\dccx$ that arise from cone points of $X$.
The branched covering map $\pibuchc: \buchc  \to \uchc$ from \S\ref{sec:auxiliary covering spaces} induces a cubical branched covering map $\dcc \buchc \to \dccx$, see \S\ref{sec:dual npc} for details.

We introduce certain subcomplexes on $\dcc \buchc$, following \cite{LR24}, where the analogous subcomplexes were defined for $\dccx$.
For each tile $\tau$  of $\buchc$, we define the \textit{dual tile} $\dcc \tau$ of $\dcc \buchc$ to be the full subcomplex consisting of vertices dual to cells of $\tau$.
Note that if $v$ is the vertex dual to $\tau$, then $\dcc \tau$ is the cubical $1$-neighborhood of $v$.
Similarly, for each mirror $M$  of $\buchc$, we define the \textit{dual mirror} $\dcc M$ of $\dcc \buchc$ to be the full subcomplex consisting of vertices dual to cells of $M$.


\subsection{\texorpdfstring{$\dcc \buchc$}{The dual cubical complex} is non-positively curved}\label{sec:dual npc}
Thanks to Gromov's link condition, it is enough to check that links of vertices of $\dcc \buchc$ are flag simplicial complexes.
Since the definition of adjacency in the dual cubical complex $\dcc \buchc$ is given in terms of codimension-$1$ inclusion of cells in the stratification of $\buchc$,  the combinatorics of the link of a cell in $\buchc$ completely determine the combinatorics of the link of the dual vertex in $\dcc \buchc$.
Note that the branched covering map $\pibuchc:\buchc \to \uchc$ respects the stratifications of these spaces, in the sense that it sends cells of $\buchc$ to cells of $\uchc$, preserving inclusions. Hence it induces combinatorial maps on the dual cube complexes.

First, let $v \in \dcc \buchc$ be a vertex that is not a branch vertex, and let $\sigma \subseteq \buchc$ be its dual cell.
Note that $\pibuchc:\buchc \to \uchc$ is a covering map in the complement of branch points. 
In particular, it induces an isomorphism  $\lk{\sigma}{\buchc} \to \lk{\pibuchc(\sigma)}{\uchc}$, and therefore an isomorphism $\lk{v}{\dcc \buchc} \to \lk{w}{\dccx}$, where $w$ is the vertex of $\dccx$ dual to the cell $\pibuchc (\sigma)$. The latter link is known to be flag by \cite[Proposition 4.10 (3)]{LR24}.

Let us now consider the link of a branch vertex of $\buchc$.
Recall from  \S\ref{sec:auxiliary covering spaces} that the link of a branch vertex of $\buchc$ is isomorphic to a subdivision of $\widetilde L_i$  for some connected component $L_i$ of $L$.
The argument is similar to that in \cite[Proposition 4.10 (2)]{LR24} (note that a branch vertex has height $0$).
The main difference is that the map

  $$ \cdXbu: \buchc \overset{\pibuchc}{\to} \uchc \overset{\pi}{\to} \hc \overset{\cdX}{\to} X $$
does not induces an isomorphism on links at the branch points.
This is addressed by the following lemma, which is the analogue of \cite[Lemma 3.17]{LR24}.

\begin{lemma}\label{lem:link branch point}
Let $\hat y \in \buchc$ be a branch point, and let $y=\cdXbu(\hat y)\in X$ be the corresponding cone point.
Then the following hold.
\begin{enumerate}
    \item \label{item:link branch point is universal}  
    $\cdXbu$ induces the universal covering map $ \lk{\hat y}{\buchc} \to  \lk {y}{X}$.

    \item \label{item:link branch point is simplicial}   $\lk{\hat y}{\buchc}$ is a piecewise spherical simplicial complex with vertices given by the edges containing $\hat y$, and in which $m+1$ vertices span an $m$--simplex if and only if the corresponding edges are contained in a $(m+1)$--cell.  
\end{enumerate}
\end{lemma}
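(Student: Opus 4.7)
The plan is to analyze the composition $\cdXbu = \cdX \circ \pi \circ \pibuchc$ one map at a time, tracking how each map acts on the relevant link. Claim (2) will then follow essentially for free from the description of the link of a cone point of the cubical complex $X$ and the covering description in (1).

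For (1), I would argue as follows. The map $\pi \colon \uchc \to \hc$ is an ordinary covering projection, hence a local isometry at any lift $\tilde y$ of $y$, and so induces a simplicial isomorphism $\lk{\tilde y}{\uchc} \to \lk{y}{\hc}$. The folding $\cdX \colon \hc \to X$ comes from Charney-Davis strict hyperbolization applied to $X$, and a fundamental feature of every hyperbolization procedure is that links at vertices are preserved up to the chosen subdivision; in particular, $\cdX$ induces an isomorphism from $\lk{y}{\hc}$ onto $\lk{y}{X}$, where the latter is identified with (a subdivision of) the component $L_i$. The essential step is the third map $\pibuchc \colon \buchc \to \uchc$. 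By the construction in \S\ref{sec:auxiliary covering spaces}, $\buchc$ is the metric completion of the universal cover of $\uchc$ punctured at its cone points. A small punctured metric ball around $\hat y$ in $\buchc$ is therefore identified with a universal cover of a small punctured metric ball around $\tilde y$ in $\uchc$, which in turn is (up to homotopy) $\lk{\tilde y}{\uchc} \cong L_i$. Passing to links, this shows that $\pibuchc$ induces the universal covering $\widetilde{L_i} \to L_i$ on links. Composing the three induced maps gives the universal cover $\lk{\hat y}{\buchc} \to \lk{y}{X}$.

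For (2), the link $\lk{y}{X}$ is a piecewise spherical simplicial complex whose vertices correspond to the edges of $X$ at $y$ and whose $m$-simplices correspond to the $(m+1)$-cells of $X$ containing $y$, and the combinatorial structure of $\lk{\hat y}{\buchc}$ is inherited via the covering map from (1): edges of $\buchc$ at $\hat y$ project bijectively locally to edges of $X$ at $y$, and cells of $\buchc$ containing $\hat y$ project to cells of $X$ containing $y$, with inclusions preserved. The piecewise spherical metric on $\lk{\hat y}{\buchc}$ is the pullback of the piecewise spherical metric on $\lk{y}{X}$ under the covering map, which is well-defined since covers of piecewise spherical complexes inherit a canonical piecewise spherical structure. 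The main obstacle, and the place that most deserves care, is the second step above: verifying that the metric completion in the definition of $\buchc$ really does produce, locally near $\hat y$, the open cone on the universal cover $\widetilde{L_i}$ of the link. This requires unpacking the construction, checking that punctured neighborhoods of $\tilde y$ in $\uchc$ deformation retract onto $\lk{\tilde y}{\uchc}$, and observing that their universal covers together with the metric completion indeed produce a single branch point with the desired link.
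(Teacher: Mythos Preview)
Your proof is correct and follows essentially the same strategy as the paper: decompose $\cdXbu$ into its three factors, observe that $\pi$ and $\cdX$ induce isomorphisms on links of vertices, and argue that $\pibuchc$ induces the universal covering map on links at branch points. The paper handles this last step by invoking Remark~\ref{rem:cover is coned} (that $\buchc$ is obtained from $\ucrhc KL$ by coning off copies of $\widetilde L_i$), which is exactly the content of your metric-completion analysis of punctured neighborhoods. For part~\eqref{item:link branch point is simplicial} the paper simply refers to the analogous computation in the non-relative setting \cite[Lemma 3.17(3)]{LR24}, while you instead deduce it from~\eqref{item:link branch point is universal} by pulling back the known simplicial structure of $\lk{y}{X}$ through the covering map; both routes are valid and comparable in effort.
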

\begin{proof}
The maps $\piuchc$ and $\cdX$ induce isomorphisms on the link of any vertex. 
Recall from Remark~\ref{rem:cover is coned} that $\buchc$ can be obtained from $\ucrhc KL$ by coning off each copy of the universal cover $\widetilde L_i$ of the components $L_i$ of $L$.
In particular, the map $\pibuchc$ induces the universal covering map on the links of the branch points. 
Therefore we obtain \eqref{item:link branch point is universal}.
To prove \eqref{item:link branch point is simplicial} just argue as in \cite[Lemma 3.17 (3)]{LR24} with $k=0$.
\end{proof}

\begin{lemma}\label{lem:link branch point is flag}
Let $v\in \dcc \buchc$ be a branch vertex. 
Then $\lk{v}{\dcc \buchc}$ is a flag simplicial complex, isomorphic to $\lk{\hat y}{\buchc}$.
\end{lemma}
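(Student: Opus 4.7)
The plan is to first establish a simplicial isomorphism $\lk{v}{\dcc \buchc} \cong \lk{\hat y}{\buchc}$, where $\hat y$ is the branch point dual to $v$, and then deduce flagness from a covering-space argument combined with Gromov's link condition applied to the cube complex $X$.

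For the isomorphism, I would unpack the construction of $\dcc \buchc$: a vertex adjacent to $v$ is dual to a $1$-cell of $\buchc$ containing $\hat y$, and an $m$-simplex of $\lk{v}{\dcc \buchc}$ records an $(m+1)$-cube of $\dcc \buchc$ containing $v$. Such a cube corresponds to an interval $[\hat y, C]$ in the face poset of $\buchc$ whose Hasse diagram is boolean; since each cell $C$ of $\buchc$ is combinatorially a cube (being the closure of a connected component of the preimage of an open face of $\square^n$ under $\cdXbu$), these intervals are in bijection with the $(m+1)$-cells of $\buchc$ containing $\hat y$. Comparing with the description of $\lk{\hat y}{\buchc}$ in Lemma~\ref{lem:link branch point}\eqref{item:link branch point is simplicial} yields the claimed isomorphism: vertices correspond to edges through $\hat y$, and $m$-simplices to $(m+1)$-cells through $\hat y$, with the obvious incidences.

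For flagness, by Lemma~\ref{lem:link branch point}\eqref{item:link branch point is universal} the folding $\cdXbu$ induces the universal covering map $\lk{\hat y}{\buchc} \to \lk{y}{X}$, where $y = \cdXbu(\hat y)$. The base $\lk{y}{X}$ is flag by Gromov's link condition (Lemma~\ref{lem:gromov link condition}), since $X$ is non-positively curved cubical. It then remains to verify that flagness passes to simplicial covers: given pairwise adjacent vertices $a_0, \dots, a_m$ in $\lk{\hat y}{\buchc}$, their images in $\lk{y}{X}$ are distinct and pairwise adjacent (a covering map does not identify the two endpoints of an edge), so they span a simplex $\bar\sigma$ by flagness of the base; lifting $\bar\sigma$ to the unique simplex containing $a_0$ and using the uniqueness of lifts of edges with prescribed starting vertex forces the remaining vertices of the lift to coincide with $a_1, \dots, a_m$, producing the required simplex upstairs.

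The main point requiring care is this last covering-space argument for flagness, which is the essential new ingredient compared to the proof of \cite[Proposition~4.10(2)]{LR24}, where the vertex has a compact neighborhood and no branching is involved. Once the two steps above are in place, combining them yields the lemma.
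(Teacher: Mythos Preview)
Your proof is correct and uses the same ingredients as the paper's, organized in the opposite order: the paper first shows the two links share a $1$-skeleton, then proves each is flag separately (asserting without detail that flagness passes from $\lk{y}{X}$ to its universal cover, and proving flagness of $\lk{v}{\dcc\buchc}$ by producing a cell $\mu$ and then the cube on the interval $[\hat y,\mu]$), and finally deduces the isomorphism from ``flag with the same $1$-skeleton''; you instead establish the full simplicial isomorphism up front via the boolean-interval description of cubes at $v$, and then transport flagness across it. Your explicit lifting argument for flagness under covers fills in a step the paper leaves implicit, and the one fact you invoke without proof---that the interval $[\hat y,C]$ in the cell poset is boolean---is exactly what the paper uses when it says ``the collection of cells that contain $\hat y$ and are contained in $\mu$ give rise to a cube.''
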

\begin{proof}
Let $\hat y \in \buchc$ be the branch point dual to $v$, and let $y=\cdXbu(\hat y)\in X$ be the corresponding cone point.
Any vertex $w_i$ of $\lk{v}{\dcc \buchc}$ corresponds to a vertex $v_i$ of $\dcc \buchc$ adjacent to $v$, and therefore to an edge $e_i$ of $\buchc$ that contains $\hat y$.
Note that if $w_i,w_j$ are two adjacent vertices in $\lk{v}{\dcc \buchc}$, then there is a vertex $v_{ij}$ such that $v,v_i,v_j,v_{ij}$ span a square in $\dcc \buchc$. Since $v$ has height $0$, necessarily $v_{ij}$ has height $2$, hence it is dual to a $2$-cell of $\buchc$ containing the edges $e_i,e_j$ dual to $v_i,v_j$.
In particular, $e_i,e_j$ are adjacent in $\lk{\hat y}{\buchc}$.
This shows that  $\lk{v}{\dcc \buchc}$ and  $\lk{\hat y}{\buchc}$ have the same 1-skeleton.

By Lemma~\ref{lem:link branch point}, $\lk{\hat y}{\buchc}$ identifies with the universal cover of $\lk{y}{X}$.
Since $X$ is non-positively curved, $\lk{y}{X}$ is flag, so $\lk{\hat y}{\buchc}$ is flag too.
Therefore, the lemma is proved if we show that $\lk{v}{\dcc \buchc}$ is flag.

Let $w_0,\dots,w_p$ pairwise adjacent vertices in $\lk{v}{\dcc \buchc}$, and let $v_0,\dots,v_p$ be the corresponding vertices of $\dcc \buchc$.
Let $e_0,\dots,e_p$ be the  edges in $\buchc$ that correspond to $v_0,\dots,v_p$. 
These edges contain $\hat y$, and are pairwise adjacent in $\lk{\hat y}{\buchc}$.
Since $\lk{\hat y}{\buchc}$ is flag, there is a cell $\mu$ of $\buchc$ containing $e_0,\dots,e_p$.
The collection of cells that contain $\hat y$ and are contained in $\mu$ give rise to a cube in $\dcc \buchc$ that contains $v_0,\dots,v_p$.
As a result, $w_0,\dots,w_p$ span a simplex in $\lk{v}{\dcc \buchc}$.
\end{proof}

We note that a completely analogous argument shows that the link of a vertex in $\uchc$ and the link of its dual vertex in $\dccx$ are isomorphic.
Also note that it follows from the above discussion that the complex $\dcc \buchc$ is a branched covering of the complex $\dccx$, branching over the set $\bv$ of branched vertices.


\subsection{\texorpdfstring{$\dcc \buchc$}{The dual cubical complex} is simply-connected}
In this section we show that the dual cubical complex is simply connected. 
 Recall that we are working under the standing assumption in Remark~\ref{rem:assumption}.
We follow the approach in \cite{LR24}, which is based on the following two observations about the combinatorial geometry of the dual cubical complex $\dccx$.
\begin{itemize}
    \item[(DT)] An edge-loop entirely contained in a dual tile is nullhomotopic.
    \item[(DM)] If an edge-loop  is not entirely contained in a dual tile, then it can be cut  along dual mirrors and decomposed into a product of edge-loops, each of which is contained in a dual tile.
\end{itemize}

The step (DT) carries over verbatim from \cite[\S 4.2]{LR24}, because the arguments there are completely local, in the sense that they only depend on the geometry and combinatorics of a single tile, and tiles of $\buchc$ are isomorphic to those of $\uchc$.

For the step (DM), we will check that all the arguments from \cite{LR24} carry over to the current setting, because they do not rely in any essential way on the local finiteness of the spaces involved.
Indeed, each point of $\buchc$ is contained in at most finitely many mirrors, and each finite edge-path of $\dcc \buchc$ intersects only finitely many dual mirrors (each of them only finitely many times).

The following statement provides  one of the main properties of mirrors. It is a direct consequence of foldability as in \cite[Proposition 3.14]{LR24}. 

\begin{proposition}\label{prop:mirrors convex}
Each mirror of $\buchc$ is a closed connected convex subspace of $\buchc$.
\end{proposition}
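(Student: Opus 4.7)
The plan is to transport the argument of \cite[Proposition 3.14]{LR24} to the branched setting, verifying that its essential ingredients (foldability, negative curvature, and simple connectedness of the ambient space) survive the passage from $\uchc$ to $\buchc$. Let $M$ be a connected component of $\foldbuchc^{-1}(F)$ for a codimension-$1$ face $F \subseteq \square^n$.

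Closedness and connectedness are immediate. Since $F$ is closed in $\square^n$ and $\foldbuchc$ is continuous, $\foldbuchc^{-1}(F)$ is closed in $\buchc$; and because $\buchc$ is a piecewise hyperbolic complex, in particular locally path-connected, its connected components are closed. Connectedness is part of the definition of $M$.

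The heart of the matter is convexity. The strategy is to exploit foldability tile-by-tile: on each tile $\tau$ of $\buchc$, the folding $\foldbuchc|_\tau$ is an isometry, so $M \cap \tau$ is a union of faces of $\tau$ folding onto $F$, and each such face is a totally geodesic convex subset of $\tau$. Because adjacent tiles meet only along shared mirror-faces, these convex pieces assemble into a locally convex subspace of $\buchc$ at every non-branch point. Combined with the facts that $\buchc$ is complete, $\cat{-1}$, and simply connected (the latter because $\buchc$ is obtained from the universal cover $\ucrhc KL$ of $\relhyp KL$ by coning off the simply-connected subspaces $\widetilde L_i$), local convexity of $M$ can be upgraded to global convexity by the standard developing/Cartan--Hadamard-type argument used in \cite[Proposition 3.14]{LR24}; equivalently, the local isometric reflections across the pieces of $M$ on each tile extend to a global isometric involution of $\buchc$ whose fixed set is $M$, and the fixed set of an involution of a $\cat{0}$ space is convex.

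The main obstacle I expect is verifying local convexity of $M$ at a branch point $\hat y$, where $\buchc$ is not locally isometric to $\uchc$ and the arguments from \cite{LR24} do not apply verbatim. This is resolved by Lemma~\ref{lem:link branch point}: the link $\lk{\hat y}{\buchc}$ is the universal cover $\widetilde L_i$ of a component of $L$, hence simply connected and $\cat{1}$. The intersection of $M$ with this link is a full subcomplex arising from a face of $\square^n$ under the induced folding on the link, and a link-level version of the foldability argument shows it is convex in $\widetilde L_i$. Thus local convexity of $M$ holds even at branch points, and the adaptation of \cite[Proposition 3.14]{LR24} goes through.
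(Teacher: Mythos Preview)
Your approach matches the paper's: the paper gives no standalone proof and simply says the statement ``is a direct consequence of foldability as in \cite[Proposition 3.14]{LR24}.'' You are supplying the details the citation conceals, and your strategy --- verify that $M$ is locally convex and then invoke the fact that a closed connected locally convex subspace of a complete $\cat 0$ space is convex --- is the correct one, including your extra care at branch points (which the paper does not mention explicitly here).

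Two technical points deserve correction. First, the folding $\foldbuchc|_\tau$ is \emph{not} an isometry: a tile $\tau$ is isometric to the unbounded hyperbolic space $\uchq$, whereas $\square^n$ is a compact Euclidean cube. What you want instead is the folding $\buchc\to\hq$ (through which $\foldbuchc$ factors), which restricts on each tile to the local isometry $\uchq\to\hq$; the totally geodesic codimension-$1$ faces of $\hq$ then pull back to the totally geodesic pieces of $M\cap\tau$. Second, the ``global isometric involution of $\buchc$ whose fixed set is $M$'' is not available here and is not how \cite{LR24} argues: local reflections across faces of $\hq$ have no reason to patch together into a global isometry of $\buchc$. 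Your primary local-to-global argument suffices; drop the reflection alternative.
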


Next, we turn to the separation properties of the collection of mirrors.
Following \cite[\S 3.6]{LR24}, for each $i=1,\dots,n$, let $\bmirrors_i$ be the collection of mirrors of $\buchc$ that fold to one of the two parallel $i^{th}$ faces of $\square^n=[0,1]^n$, i.e. $\{ x_i=0\} $ and $\{ x_i=1\} $. 
Notice that by construction any two elements of $\bmirrors_i$ are disjoint, and even have disjoint $\varepsilon$--neighborhoods for $\varepsilon$ sufficiently small (because $\Gamma$ is cocompact).
Let $\bcomponents_i$ be the collection of connected components of $\buchc \setminus \cup_{M\in \bmirrors_i} M$.
For each mirror $M\in \bmirrors_i$ and for each component $C\in \bcomponents_i$, consider the following \textit{equidistant space}, obtained by pushing the mirror $M$ into the component $C$. 
\[
\bedgespace=\{ x\in C \ | \ d(x,M)=\varepsilon\}.
\]

Note that the local geometry of a component $C\in \bcomponents_i$ in a neighborhood of a mirror $M\in \bmirrors_i$ is not sensitive to the fact that the stratification of $\buchc$ is not locally finite around a branch point.
More precisely, if $\mirrors_i$ and $\components_i$ denote the analogous collections of mirrors and complementary components in $\uchc$, then 
we have that the map $\pibuchc:\buchc\to \uchc$ maps 
each component $C\in \bcomponents_i$ locally  isometrically to a component $\pibuchc(C) \in \components_i$ in $\uchc$. 
So, we obtain the following analogue of \cite[Lemma 3.27]{LR24}.

\begin{lemma}
For $\varepsilon >0$ small enough there is $k\in (-1,0)$ such that the metric induced on $\edgespace$ is $\cat k$. 
\end{lemma}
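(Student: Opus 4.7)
My plan is to follow the proof of the absolute analogue \cite[Lemma 3.27]{LR24}, transferring the result to $\buchc$ via the branched covering map $\pibuchc$, which as recalled just before the statement restricts to a local isometry $C\to\pibuchc(C)$. Under this local isometry, $M\in\bmirrors_i$ is sent to a mirror in $\mirrors_i$ and $\edgespace$ is sent locally isometrically to the corresponding equidistant space $E^\varepsilon_{\pibuchc(M),\pibuchc(C)}$, which carries a CAT($k$) metric for some $k\in(-1,0)$ (for $\varepsilon>0$ small enough) by \cite[Lemma 3.27]{LR24}. At any point of $\edgespace$ not lying over a branch point, this local isometry immediately yields local $\cat{k}$.

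The first new step is to verify the local $\cat{k}$ condition at branch points $\hat y$ that may lie in (or near) $\edgespace$. Here I would invoke Lemma~\ref{lem:link branch point}, which identifies $\lk{\hat y}{\buchc}$ with the universal cover of $\lk{\pibuchc(\hat y)}{\uchc}$. Since $\uchc$ is $\cat{-1}$, the target link is $\cat{1}$, and the universal cover of a $\cat{1}$ space is again locally $\cat{1}$ with the pulled-back metric; intersecting with $\edgespace$ and using the piecewise hyperbolic structure, the link condition for the corresponding vertex in $\edgespace$ still holds, giving local $\cat{k}$ at $\hat y$.

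The final step is to upgrade local $\cat{k}$ to global $\cat{k}$ via the Cartan–Hadamard theorem \cite[Theorem II.4.1]{BH99}. This requires $\edgespace$ to be complete and simply connected. Completeness follows since $\edgespace$ sits at positive distance $\varepsilon$ from $\partial C$ inside the complete space $\buchc$. Simple connectivity follows from deformation retracting $\edgespace$ onto $M$ via nearest-point projection in the $\cat{-1}$ space $\buchc$, combined with the fact that $M$ is contractible as a closed convex subset of a $\cat{-1}$ space (by Proposition~\ref{prop:mirrors convex}).

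The main obstacle is the branch-point analysis. In the absolute setting of \cite{LR24} no such singular points arise, whereas in $\buchc$ mirrors may pass through branch points and additional branch points may lie in the interior of $C$, so $\edgespace$ can genuinely meet or come close to the branching locus. The covering relationship of links recorded by Lemma~\ref{lem:link branch point} is precisely the ingredient that preserves the $\cat{1}$ link condition under the universal cover and thereby allows the $\cat{k}$ property of equidistant spaces to transfer from $\uchc$ to $\buchc$.
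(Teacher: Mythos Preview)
Your core idea---transfer the result from \cite[Lemma 3.27]{LR24} via the local isometry $\pibuchc|_C$---is exactly the paper's approach, and the paper's proof is in fact nothing more than the sentence preceding the lemma.  However, you are working harder than necessary in one place and this reflects a small misreading of the geometry.

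Your ``first new step'' addresses a non-issue.  Branch points are $0$--cells of the stratification, so the folding map $\foldbuchc$ sends each one to a vertex of $\square^n$; every vertex of $\square^n$ lies on one of the two faces $\{x_i=0\}$ or $\{x_i=1\}$, so every branch point lies on some mirror in $\bmirrors_i$.  Hence branch points never lie in an open component $C\in\bcomponents_i$, and in particular never lie in $\edgespace\subset C$.  This is precisely why the paper can assert that $\pibuchc$ maps $C$ \emph{locally isometrically} to $\pibuchc(C)$, and why it says the local geometry near $M$ is ``not sensitive'' to the non--local-finiteness at branch points.  No separate link analysis at branch points is needed.

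Your Cartan--Hadamard step is a reasonable elaboration the paper leaves implicit.  Two small corrections there: first, what you call a ``deformation retraction onto $M$'' is really a homotopy equivalence (since $M\not\subset\edgespace$), realized by nearest-point projection and its inverse ``push into $C$ by $\varepsilon$''; second, the relevant target is not all of $M$ but the portion of $M$ bounding $C$, which is still convex in $\buchc$ and hence contractible.  With those adjustments your global argument goes through.
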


As a consequence, we obtain the following analogue of \cite[Proposition 3.29]{LR24}.

\begin{proposition}\label{prop:graph of spaces}
$\buchc$ admits the structure of a graph of spaces, with underlying graph a connected tree.
\end{proposition}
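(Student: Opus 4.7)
The plan is to establish this proposition by adapting the proof of \cite[Proposition 3.29]{LR24} to the branched setting. Fix an index $i \in \{1,\dots,n\}$ and consider the pairwise disjoint family $\bmirrors_i$ together with the collection $\bcomponents_i$ of complementary components. Define a graph $T_i$ whose vertex set is $\bcomponents_i$ and whose edge set is $\bmirrors_i$, declaring each mirror $M \in \bmirrors_i$ to join the two components on either side of it via the collars $\bedgespace$. The graph of spaces structure will then assign to each vertex the closure of the corresponding component, to each edge the associated mirror (with $\bedgespace$ playing the role of an $\varepsilon$-collar), and use the obvious inclusions as attaching maps.

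First I would verify that each $M \in \bmirrors_i$ separates $\buchc$ into exactly two connected components, so that the incidences of $T_i$ are well defined. Away from branch points the map $\pibuchc$ is a local isometry sending mirrors of $\buchc$ onto mirrors of $\uchc$ and respecting the stratification, so separation transfers from the corresponding statement in $\uchc$, which is proved in \cite{LR24} using foldability and convexity (Proposition~\ref{prop:mirrors convex}). At a branch point $\hat y \in M$, Lemma~\ref{lem:link branch point} identifies $\lk{\hat y}{\buchc}$ with the universal cover of $\lk{y}{X}$, and the part of the link corresponding to $M$ is a lift of the separating subcomplex cut out by the face of $\square^n$ to which $M$ folds; since $\widetilde L_i$ is simply connected, this lift is connected and locally separates $\buchc$ near $\hat y$.

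Once separation is established, connectedness of $T_i$ follows from path-connectedness of $\buchc$ together with local finiteness of $\bmirrors_i$ (any compact path meets only finitely many mirrors, since mirrors in the same parallel family have pairwise disjoint $\varepsilon$-neighborhoods). The tree property is the standard mod-$2$ intersection argument: a non-trivial cycle in $T_i$ would produce a loop in $\buchc$ crossing some fixed mirror $M$ an odd number of times, contradicting the fact that $M$ separates $\buchc$ into two pieces. With $T_i$ shown to be a tree, the graph-of-spaces data is immediate: vertex spaces are the closed components, edge spaces are the mirrors, and the inclusions of the two $\bedgespace$-collars of $M$ into the two adjacent closed components serve as attaching maps, each an isometry onto its image by the analysis of the equidistant spaces.

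The main obstacle is verifying separation at a branch point, since the link there is no longer a sphere but the universal cover $\widetilde L_i$ of a component of $L$. The key point is that $\widetilde L_i$ is simply connected and $\cat 1$, so the separating subcomplex of $\lk{y}{X}$ pulls back to a single connected separating subcomplex upstairs; this is morally the same phenomenon exploited in Lemma~\ref{lem:link branch point is flag}, where the universal cover structure is used to transfer combinatorial properties from $X$ to $\buchc$. Modulo this verification, the rest of the argument from \cite{LR24} goes through essentially unchanged, since all the relevant local arguments depend only on foldability and on the behavior of a single tile, both of which are inherited by $\buchc$ from $\uchc$.
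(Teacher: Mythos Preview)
Your proposed graph structure does not match the paper's, and the mismatch is not cosmetic: you take the vertex set to be $\bcomponents_i$ and the edge set to be $\bmirrors_i$, so that each mirror $M$ is a single edge joining ``the two components on either side of it.'' This requires that every mirror be adjacent to exactly two elements of $\bcomponents_i$, and you spend most of the argument trying to establish that $\buchc\setminus M$ has exactly two components. But the paper explicitly notes (in the paragraph following the proposition) that \emph{a mirror that goes through a branch point intersects the closure of possibly infinitely many complementary components}. The graph of spaces in \cite{LR24} and in the paper is bipartite: both the mirrors in $\bmirrors_i$ and the components in $\bcomponents_i$ are vertices, and an edge joins $M$ to $C$ whenever $M\subseteq \overline C$, with edge space $E^\varepsilon_{M,C}$ (note the indexing by the pair). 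In the locally finite setting of $\uchc$ this bipartite tree may happen to be the barycentric subdivision of the tree you describe, but in $\buchc$ a mirror-vertex can have infinite valence, so your tree simply cannot be formed.

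Your local argument at a branch point does not rescue this. Simple connectivity of $\widetilde L_j$ tells you that the trace of $M$ on $\lk{\hat y}{\buchc}$ is obtained by lifting a separating subcomplex, but it does not force the complement of that trace in $\widetilde L_j$ to have only two components; it can have infinitely many, and this is precisely the source of the infinite valence. Proposition~\ref{prop:mirrors separate} in the paper only asserts that the two tiles of a framing lie in \emph{distinct} components of $\buchc\setminus M$, not that there are only two such components. The remainder of your outline (connectedness from path-connectedness of $\buchc$, acyclicity via a parity/separation argument) does transfer to the bipartite tree, so the fix is to change the underlying graph rather than the strategy.
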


The construction and the proof are the same as in the case of $\uchc$.
The only difference is that in the case of $\buchc$ the tree is not locally finite: a mirror that goes through a branch point intersects the closure of possibly infinitely many complementary components, so the corresponding vertex has possibly infinitely many neighbors. 
However, this property is not needed.
The relevant property of this tree is that it has no boundary, and this is still true in our setting, as we now explain.

The arguments  from \cite[\S 3.7]{LR24} are local in nature: they deal with a cell $\sigma$ on a mirror $M$ and a   \textit{framing} for it, i.e. a choice of two tiles  $\tau_1, \tau_2$ such that $\sigma \subseteq \tau_1\cap \tau_2 \subseteq M$.
Once again, the map $\pibuchc:\buchc\to \uchc$ preserves the structure of framings, and this can be used to obtain the following statement, which is analogous to \cite[Proposition 3.37]{LR24}.

\begin{proposition}\label{prop:mirrors separate}
Each $M\in \bmirrors_i$ separates $\buchc$. 
More precisely, let $M\in \bmirrors_i$ be a mirror, let $\sigma \subseteq M$  be a $k$--cell, and let $\{\tau_1,\tau_2\}$ be a framing for $\sigma$. 
Then $\tau_1,\tau_2$ are contained in the closure of two distinct connected components of $\buchc\setminus M$.
\end{proposition}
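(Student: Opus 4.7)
The strategy is to adapt the proof of \cite[Proposition 3.37]{LR24} to the branched setting, using the covering map $\pibuchc:\buchc\to\uchc$ and the local-to-global structure provided by Propositions~\ref{prop:mirrors convex} and \ref{prop:graph of spaces}. The argument splits naturally into a global separation statement and a local one identifying the sides of the framing.

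For the global separation, I would invoke the graph of spaces structure from Proposition~\ref{prop:graph of spaces}: the mirror $M$ corresponds to a vertex (or edge) in the underlying tree, and removing it disconnects the tree. More concretely, $\buchc\setminus M$ decomposes as the disjoint union of the complementary components in $\bcomponents_i$ for the family $\bmirrors_i$ containing $M$, and by definition no two such components can meet without crossing a mirror parallel to $M$. A quick Mayer--Vietoris-style argument — pushing $M$ slightly to the two CAT($k$) equidistant spaces $\edgespace$ for the two adjacent components $C$ in $\bcomponents_i$ — shows that $\buchc\setminus M$ has at least two components, one containing each side.

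For the local step, pick an interior point $x$ of $\sigma$. If $x$ is not a branch point, then $\pibuchc$ is a local isometry at $x$ and the local picture around $x$ in $\buchc$ is identified with that around $\pibuchc(x)$ in $\uchc$; the framing maps to a framing for $\pibuchc(\sigma)$, and \cite[Proposition 3.37]{LR24} places $\pibuchc(\tau_1)$ and $\pibuchc(\tau_2)$ on opposite local sides of $\pibuchc(M)$, whence $\tau_1,\tau_2$ sit in distinct components of a small ball $B(x,\varepsilon)\setminus M$. If $x$ is a branch point, then $\sigma$ is a single vertex and the link $\lk{x}{\buchc}$ is the universal cover of $\lk{\foldbuchc(x)}{X}$ by Lemma~\ref{lem:link branch point}; the trace of $M$ on this link is a convex codimension-one subcomplex, and the framing picks out two top-dimensional simplices of $\lk{x}{\buchc}$ sharing that face. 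Since $\lk{x}{\buchc}$ is CAT($1$), these two simplices lie on opposite sides of the trace of $M$, which gives the local separation of $\tau_1$ from $\tau_2$ in $B(x,\varepsilon)\setminus M$.

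Combining the two steps finishes the proof: a continuous path in $\buchc\setminus M$ from an interior point of $\tau_1$ to one of $\tau_2$ would begin in the local component of $B(x,\varepsilon)\setminus M$ containing $\tau_1$, but by global separation it cannot reach the other local component without meeting $M$, a contradiction. I expect the only subtlety is the branch-point case of the local argument: one must verify that the link of $M$ inside $\lk{x}{\buchc}$ really does separate this (non-locally-finite) simplicial sphere into at least two components containing the two framing simplices. This is where the branched covering $\pibuchc$ and the foldability of $X$ are essential — the trace of $M$ lifts to the universal cover the trace of $\foldbuchc(M)$, which separates $\lk{\foldbuchc(x)}{X}$ locally at the shared face by the corresponding local result in $X$, and separation of the cover then follows from the fact that both simplices are adjacent to the separating subcomplex on opposite sides.
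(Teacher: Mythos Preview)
Your overall strategy matches the paper's: the paper also gives essentially no proof here, merely noting that the arguments of \cite[\S 3.7]{LR24} are local in nature and that $\pibuchc$ preserves the structure of framings, so that the proof of \cite[Proposition 3.37]{LR24} goes through verbatim once Proposition~\ref{prop:graph of spaces} is in hand.  Your proposal is a reasonable attempt to unpack that sketch.

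There is, however, a genuine logical gap in your ``combining'' step.  Knowing that $M$ globally disconnects $\buchc$ and that $\tau_1,\tau_2$ lie in distinct components of $B(x,\varepsilon)\setminus M$ does \emph{not} by itself force them into distinct global components; a path could leave the ball and come back on the other local side without ever touching $M$.  What you need is that the local sides of $M$ inject into the set of global complementary components, and this is precisely what the tree of Proposition~\ref{prop:graph of spaces} buys you (not a Mayer--Vietoris count of components, which in any case need not equal two when $M$ passes through a branch point).  The clean statement is: the local argument shows that $\tau_1$ and $\tau_2$ lie in distinct elements $C_1\neq C_2$ of $\bcomponents_i$, and since the underlying graph is a tree with $C_1,C_2$ both adjacent to the vertex $M$, they lie in distinct components of $\buchc\setminus M$.

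Your separate treatment of the branch-point case via the link is more work than needed.  The paper's observation is simply that $\pibuchc$ sends cells to cells and hence framings to framings, regardless of whether a branch point is present; the local separation of $\tau_1$ from $\tau_2$ into distinct members of $\bcomponents_i$ can then be read off from the corresponding statement for the image framing in $\uchc$, where \cite[Proposition 3.37]{LR24} already applies.  Your link argument can be made to work, but as you yourself note, it requires checking that the trace of $M$ in the (possibly non--locally-finite) link actually separates the two top simplices, and this is neither immediate from $\cat 1$ nor obviously simpler than pushing down via $\pibuchc$.
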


Arguing as in \cite[\S 4.3]{LR24} it follows that each dual mirror $\dcc M$ separates the dual complex $\dcc \buchc$.
When an edge-loop is not entirely contained in a tile, we want to decompose it into subpaths by cutting it along dual mirrors.
The following definitions are taken from \cite[\S 4.3.2]{LR24}.
Let $p=(v_0,\dots,v_s)$ be an edge--path in $\dcc \buchc$, and let $\sigma_0,\dots, \sigma_s$ be the  cells of $\buchc$ dual to its vertices. 
We say that $p$ is a \textit{bridge} if there exists a mirror $M$ of $\buchc$ such that $v_0,v_s\in \dcc M$, but $p\not \subseteq \dcc M$.
In other words, $\sigma_0,\sigma_s\subseteq M$ but some of the other cells $\sigma_1,\dots,\sigma_{s-1}$ are not contained in $M$. 
In this case, we say that $p$ is \textit{supported} by $M$.
We say $p$ is a \textit{minimal bridge} if none of its subpaths is a bridge

The arguments in \cite[\S 4.3-4]{LR24} do not use the local finiteness of $\uchc$, so they carry over to $\buchc$.
Indeed, they just rely on the orthogonality properties of the collection of mirrors and tiles, such as the fact that either two mirrors are disjoint, or they intersect orthogonally and the projection of one to the other is contained in their intersection; see Lemma~\cite[Lemma 4.22]{LR24}.
This is enough to turn the nearest point projection $\pi_M:\buchc \to M$ to a mirror $M$ into a length-decreasing projection from a certain neighborhood  of $\dcc M$ in $\dcc \buchc$ onto $\dcc M$, see \cite[Lemma 4.24]{LR24} and the discussion after it.
This neighborhood is the one consisting of all the dual tiles that intersect $\dcc M$.
Moreover, if a minimal bridge $p$ is supported by  $M$, then $p$ remains inside this neighborhood of $\dcc M$; see Lemma 4.25 in \cite{LR24}.
It follows that we can project $p$ to $\dcc M$ and obtain a shorter path; see Figure~\ref{fig:projection_minimal_bridge}.
This is stated in the following lemma, which is the core of the step (DM), and is the analogue of \cite[Lemma 4.26]{LR24}.

\begin{lemma}\label{lem:projection control}
Let $p$ be a minimal bridge supported on a mirror $M$. 
Then there exists an edge--path $p^M\subseteq \dcc M$, such that $p^M$ has the same endpoints as $p$ and $\length{p^M} \leq \length p -2$.
\end{lemma}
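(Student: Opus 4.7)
The plan is to directly adapt the proof of \cite[Lemma 4.26]{LR24}, since all the relevant tools have been transferred to $\buchc$. Write $p=(v_0,\dots,v_s)$ and let $\sigma_i$ be the cell of $\buchc$ dual to $v_i$. By the analogue of \cite[Lemma 4.25]{LR24} discussed above, each $\sigma_i$ lies in a tile that meets $M$, so the nearest-point projection $\pi_M(\sigma_i)$ is a well-defined cell of $M$; let $v_i^M$ denote its dual vertex in $\dcc M$. Since $\sigma_0,\sigma_s\subseteq M$ by the definition of a bridge, we have $v_0^M=v_0$ and $v_s^M=v_s$, so the endpoints of $p$ are preserved.

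Next, I would verify that for each $i$, either $v_i^M=v_{i+1}^M$ or else $v_i^M$ and $v_{i+1}^M$ are joined by an edge of $\dcc M$. This is a direct cube-by-cube analysis: the codim-1 inclusion between $\sigma_i$ and $\sigma_{i+1}$ lives inside a single tile $\tau$ meeting $M$, and the orthogonality of mirrors (\cite[Lemma 4.22]{LR24}) implies that the cubical projection $\tau\to \tau\cap M$ sends codim-1 inclusions of faces of $\tau$ either to codim-1 inclusions in $\tau\cap M\subseteq M$ or to equalities (the latter occurring precisely when the two faces differ in a direction orthogonal to $M$). Concatenating the non-degenerate transitions produces an edge-path $p^M\subseteq \dcc M$ with the same endpoints as $p$.

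For the length estimate, the minimality of $p$ forces the first and last edges of $p$ to collapse under the projection. Indeed, if $v_1\in \dcc M$ then the proper subpath $(v_1,\dots,v_s)$ would have both endpoints in $\dcc M$ but not be contained in $\dcc M$ (since $p\not\subseteq \dcc M$ and $v_0\in \dcc M$), giving a shorter bridge supported on $M$ and contradicting minimality; hence $\sigma_1\not\subseteq M$, and a symmetric argument yields $\sigma_{s-1}\not\subseteq M$. Moreover $s\ge 2$, since an edge of $\dcc \buchc$ with both endpoints in $\dcc M$ automatically lies in $\dcc M$, so the two edges $(v_0,v_1)$ and $(v_{s-1},v_s)$ are distinct. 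Each of them realises the ``bridging'' case of the projection analysis above, where the smaller cell equals the intersection of the larger cell with $M$, hence both edges collapse under $\pi_M$. This yields $\length{p^M}\le \length{p}-2$.

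The main obstacle is the cube-by-cube verification in the second step: one has to keep track of when a codim-1 inclusion of cubical faces is orthogonal to the mirror (giving an equality under projection) and when it is not (giving another codim-1 inclusion). This is elementary within a single cube, but one must be careful that the argument is truly local, so that the non-locally-finite behaviour of mirrors near branch points does not interfere. This is ensured by performing the projection tile by tile and invoking only the already-established orthogonality and separation properties of mirrors in $\buchc$.
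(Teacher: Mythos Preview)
Your proposal is correct and takes essentially the same approach as the paper: the paper does not give a self-contained proof of this lemma but simply asserts that the argument of \cite[Lemma 4.26]{LR24} carries over verbatim, since the relevant orthogonality and separation properties (in particular the analogues of \cite[Lemmas 4.22, 4.24, 4.25]{LR24}) have already been transported to $\buchc$ and do not require local finiteness. Your write-up unpacks exactly that argument, so it matches the paper's intended proof.
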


\begin{figure}[h]
\centering
\def\svgwidth{\columnwidth}
\begingroup%
  \makeatletter%
  \providecommand\color[2][]{%
    \errmessage{(Inkscape) Color is used for the text in Inkscape, but the package 'color.sty' is not loaded}%
    \renewcommand\color[2][]{}%
  }%
  \providecommand\transparent[1]{%
    \errmessage{(Inkscape) Transparency is used (non-zero) for the text in Inkscape, but the package 'transparent.sty' is not loaded}%
    \renewcommand\transparent[1]{}%
  }%
  \providecommand\rotatebox[2]{#2}%
  \newcommand*\fsize{\dimexpr\f@size pt\relax}%
  \newcommand*\lineheight[1]{\fontsize{\fsize}{#1\fsize}\selectfont}%
  \ifx\svgwidth\undefined%
    \setlength{\unitlength}{2544.61889072bp}%
    \ifx\svgscale\undefined%
      \relax%
    \else%
      \setlength{\unitlength}{\unitlength * \real{\svgscale}}%
    \fi%
  \else%
    \setlength{\unitlength}{\svgwidth}%
  \fi%
  \global\let\svgwidth\undefined%
  \global\let\svgscale\undefined%
  \makeatother%
  \begin{picture}(1,0.46359536)%
    \lineheight{1}%
    \setlength\tabcolsep{0pt}%
    \put(0.05159611,0.09543082){\color[rgb]{0,0,1}\makebox(0,0)[lt]{\lineheight{1.25}\smash{\begin{tabular}[t]{l}$\dcc M$\end{tabular}}}}%
    \put(0,0){\includegraphics[width=\unitlength,page=1]{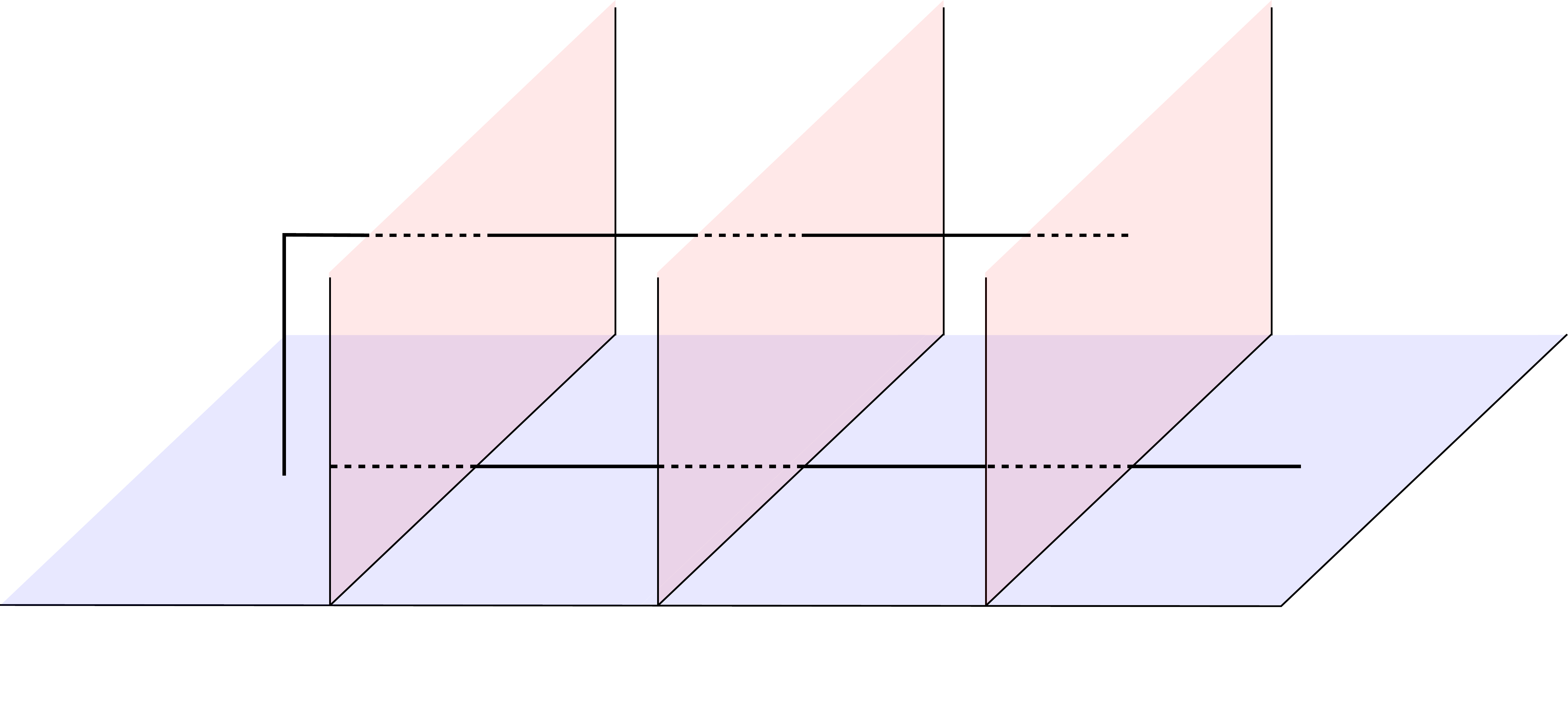}}%
    \put(0.40528107,0.33711708){\color[rgb]{0,0,0}\makebox(0,0)[lt]{\lineheight{1.25}\smash{\begin{tabular}[t]{l}$p$\end{tabular}}}}%
    \put(0.31096457,0.12490456){\color[rgb]{0,0,0}\makebox(0,0)[lt]{\lineheight{1.25}\smash{\begin{tabular}[t]{l}$p^M$\end{tabular}}}}%
    \put(0.88156205,0.34549883){\color[rgb]{0,0,0}\makebox(0,0)[lt]{\lineheight{1.25}\smash{\begin{tabular}[t]{l}$\pi_M$\end{tabular}}}}%
    \put(0,0){\includegraphics[width=\unitlength,page=2]{projection_minimal_bridge.pdf}}%
  \end{picture}%
\endgroup%

    \caption{A minimal bridge $p$ supported by $M$, and its projection  to $\dcc M$.}    
    \label{fig:projection_minimal_bridge}
\end{figure}

We are now ready to prove the main result of this section.

\begin{theorem}\label{thm:dual cubical complex is CAT(0)}
The complex $\dcc \buchc$ is a connected $\cat 0$ cubical complex.
\end{theorem}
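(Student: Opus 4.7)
The plan is to verify the two standard conditions characterizing a $\cat 0$ cube complex, namely non-positive curvature via Gromov's link condition and simple connectivity. Connectedness of $\dcc \buchc$ is immediate: $\buchc$ is connected, and every pair of top-dimensional cells in $\buchc$ can be joined by a gallery of codimension-$1$ incidences, which dualizes to an edge-path in $\dcc \buchc$.

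For non-positive curvature, I would apply Lemma~\ref{lem:gromov link condition} and check flagness of all vertex links. For a vertex $v \in \dcc \buchc$ that is not a branch vertex, the branched covering $\pibuchc \colon \buchc \to \uchc$ restricts to a local isometry in a neighborhood of the cell $\sigma$ dual to $v$, so it induces an isomorphism $\lk{v}{\dcc \buchc} \to \lk{w}{\dccx}$ for the image vertex $w$, and the latter link is flag by the analogue of \cite[Proposition 4.10]{LR24}. For a branch vertex the flagness is exactly the content of Lemma~\ref{lem:link branch point is flag}. Together these handle all vertex links.

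For simple connectivity, I would argue by induction on the edge-length of an edge-loop $\gamma$ in $\dcc \buchc$, following the (DT)/(DM) scheme outlined in the paper. If $\gamma$ is contained in a single dual tile $\dcc{\tau}$, then $\gamma$ is nullhomotopic by step (DT); since tiles of $\buchc$ are isometric copies of $\uchq$ and the argument in \cite[\S 4.2]{LR24} is entirely local in a single tile, this transfers without change. Otherwise, I would locate a minimal bridge $p$ inside $\gamma$ supported by some mirror $M$ and apply Lemma~\ref{lem:projection control} to produce a path $p^M \subseteq \dcc M$ with the same endpoints as $p$ and $\length{p^M}\leq \length{p}-2$. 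Replacing $p$ by $p^M$ in $\gamma$ yields a loop $\gamma'$ with $\length{\gamma'} < \length{\gamma}$, and the inductive hypothesis applies to $\gamma'$.

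The remaining point, and the main obstacle, is to show that $\gamma$ and $\gamma'$ represent the same class in $\pi_1(\dcc \buchc)$, i.e.\ that the concatenation $p \cdot \overline{p^M}$ is nullhomotopic. This follows from the cubical realization of the nearest-point projection $\pi_M$: because the mirrors are either disjoint or meet orthogonally with projection equal to intersection (the analogue of \cite[Lemma 4.22]{LR24}), each edge of the minimal bridge $p$ together with its projection spans a square in $\dcc \buchc$, and these squares tile a combinatorial disk bounded by $p \cdot \overline{p^M}$. All the orthogonality, convexity, and framing inputs from \cite[\S 3.6--3.7 and \S 4.3--4.4]{LR24} used for this step are local in nature and survive the passage from $\uchc$ to $\buchc$ since $\pibuchc$ preserves tiles, mirrors and framings (the only loss of local finiteness happens at branch points, which play no role in these local orthogonality statements). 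This completes (DM) and hence the proof.
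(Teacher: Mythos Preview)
Your proof is correct and follows essentially the same approach as the paper: non-positive curvature via Gromov's link condition (using Lemma~\ref{lem:link branch point is flag} at branch vertices and \cite[Proposition 4.10]{LR24} elsewhere), and simple connectivity via the (DT)/(DM) induction on edge-loop length using Lemma~\ref{lem:projection control}. The only difference is bookkeeping in the inductive step: the paper inserts the shortcut $p^M$ to split the loop into a \emph{product of two shorter edge-loops} and inducts on both, whereas you replace $p$ by $p^M$ to get a single shorter loop and then separately argue that $p\cdot\overline{p^M}$ is nullhomotopic via an explicit strip of squares; this extra disk argument is not needed (and your claim that \emph{each} edge of $p$ spans a square with its projection is slightly imprecise, since edges transverse to $\dcc M$ collapse), but the underlying homotopy is of course there.
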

\proof
By construction, the complex $\dcc \buchc$ is a path-connected cubical complex. 
Moreover, the link of any vertex is a flag simplicial complex (see Lemma~\ref{lem:link branch point is flag} for the link of a branch vertex, and \cite[Proposition 4.10]{LR24} for the other ones).
So, $\dcc \buchc$ is non--positively curved by Gromov's link condition. 
  
To conclude, we need to show that $\dcc \buchc$ is simply connected.
As in the proof of \cite[Theorem 4.29]{LR24}, we argue that edge--loops are nullhomotopic by induction on their length.
Let $p$ be an edge--loop in $\dcc \buchc$. 
If $p$ does not cross any mirror, then   $p$ stays in a tile and is therefore nullhomotopic.
So let us assume that $p$ crosses a mirror.
Then it must cross it an even number of time. 
Each pair of crossings determines a decomposition of $p$ into two bridges.
Make a choice of a minimal bridge, and use the projection from Lemma~\ref{lem:projection control} to introduce a shortcut along the supporting mirror, which allows us to write $p$ as the product of two shorter edge-loops.
Iterating this process decomposes $p$ into a product of loops that are entirely contained in a dual tile and are therefore nullhomotopic.
\endproof


\subsection{The action of \texorpdfstring{$G$}{G} on \texorpdfstring{$\dcc \buchc$}{the dual cubical complex}}\label{sec:proofs}
We now turn to the study of cube stabilizers.
The idea is to follow the approach in \cite{LR24}, and relate the cube stabilizers for the action of $G=\pi_1(\relhyp KL)$ on $\dcc \buchc$ to the cell stabilizers for the action of $G$ on $\buchc$.
Recall that the action of $G$ on $\ucrhc KL$ by deck transformations extends to an action on $\buchc$ in which each maximal parabolic subgroup identifies with the stabilizer of a branch point.
The following is the leading observation, which follows directly from the definition of the  action of $G$ on $\dcc \buchc$.

\begin{lemma}\label{lem:vertex stab equals cell stab}
The stabilizer of a vertex in $\dcc \buchc$ coincides with the stabilizer of its dual  cell in $\buchc$. 
In particular, the stabilizer of a branch vertex is a  maximal parabolic subgroup.
\end{lemma}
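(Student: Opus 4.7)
The plan is to unpack the definitions, since this statement is essentially tautological given the construction of $\dcc \buchc$ and the extended action of $G$ on $\buchc$.

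First I would recall that the vertices of $\dcc \buchc$ were defined as the cells in the stratification of $\buchc$; this is a canonical bijection between the vertex set of $\dcc \buchc$ and the set of cells of $\buchc$. The higher cubes of $\dcc \buchc$ encode codimension-one inclusions of cells. Since the action of $G$ on $\buchc$ permutes the cells of the stratification (this uses that the stratification was defined via the folding $\foldbuchc$, which is $G$-invariant in the relevant sense, and that the action was constructed to permute the mirrors), the action descends to a combinatorial action on $\dcc \buchc$ under which the vertex dual to a cell $\sigma$ is sent to the vertex dual to $g\sigma$. Consequently, for any cell $\sigma$ of $\buchc$ with dual vertex $v$, one has $g\cdot v = v$ if and only if $g\cdot \sigma = \sigma$ as a subset of $\buchc$, so $\stab{G}{v} = \stab{G}{\sigma}$.

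For the ``in particular'' clause, I would observe that branch vertices are by definition the vertices dual to the $0$-cells that are branch points of $\buchc$. Applying the first part of the lemma and then Lemma~\ref{lem:stab cell}\eqref{item:stab cell branched}, the stabilizer of a branch vertex is exactly the stabilizer of the corresponding branch point, which is a maximal parabolic subgroup of $(G,\mathcal P)$.

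There is no real obstacle here: the content of the lemma is just that the passage from $\buchc$ to $\dcc \buchc$ is an equivariant combinatorial construction that is bijective on vertices, and the only thing to verify is that fixing a cell setwise under the $G$-action on $\buchc$ is the same as fixing the dual vertex under the induced action on $\dcc \buchc$, which is immediate from how the action on $\dcc \buchc$ was defined.
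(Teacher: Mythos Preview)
Your proposal is correct and matches the paper's approach: the paper does not give an explicit proof but simply states that the lemma ``follows directly from the definition of the action of $G$ on $\dcc \buchc$'', which is precisely what you have unpacked. Your invocation of Lemma~\ref{lem:stab cell}\eqref{item:stab cell branched} for the ``in particular'' clause is also exactly what is intended.
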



To deal with higher dimensional cubes of $\dcc \buchc$, we observe the following.
By invariance of the height function, the stabilizer of a cube $C$ is always contained in the stabilizer of its vertex of minimal height.
If this minimal vertex is not a branch vertex, then they are actually equal, as established by the following result.
It is obtained as in  \cite[Lemma 5.4]{LR24}, where there are no branch vertices.

\begin{lemma}\label{lem:cube stab equals min vertex stab}
Let $C$ be a cube in $\dccx$.
If the vertex of minimal height $v$ of $C$ is not a branch vertex, then $\stab GC = \stab Gv$.
\end{lemma}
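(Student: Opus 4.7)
The plan is to prove the two containments separately. The inclusion $\stab{G}{C} \subseteq \stab{G}{v}$ is immediate: the height function on the $0$--skeleton of $\dcc{\buchc}$ is $G$--invariant (cellular isometries preserve the dimensions of dual cells), and by hypothesis $v$ is the unique minimum--height vertex of $C$, so every element of $G$ stabilizing $C$ setwise must fix $v$.

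For the reverse inclusion, let $\sigma \subseteq \buchc$ be the cell dual to $v$. From the construction of $\dcc{\buchc}$, the $k$--cube $C$ with minimum vertex $v$ is determined by a unique cell $\sigma_{\max}$ of $\buchc$ containing $\sigma$ with $\dim \sigma_{\max} = \dim \sigma + k$, and the vertices of $C$ are in natural bijection with the cells $\mu$ satisfying $\sigma \subseteq \mu \subseteq \sigma_{\max}$. Hence it suffices to show that every $g \in \stab{G}{\sigma}$ also satisfies $g \sigma_{\max} = \sigma_{\max}$: granted this, $g$ permutes the intermediate cells between $\sigma$ and $\sigma_{\max}$ and hence the vertices of $C$, so $g \in \stab{G}{C}$.

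The key ingredient is that the folding $\foldbuchc \colon \buchc \to \square^n$ factors through the map $\pi \circ \pibuchc \colon \buchc \to \hc$, which is a covering in the complement of the branch points and realizes $G$ as a group of deck transformations (extended trivially at the branch points). Consequently $\foldbuchc$ is $G$--equivariant with respect to the trivial action on $\square^n$, so $\foldbuchc(g\tau) = \foldbuchc(\tau)$ for every $g\in G$ and every cell $\tau$ of $\buchc$. Because $v$ is not a branch vertex, $\sigma$ lies away from the branch locus and $\pibuchc$ is a local isometry in a neighborhood of $\sigma$; combined with the admissibility standing assumption (Remark~\ref{rem:assumption}), this should force $\foldbuchc$ to induce a bijection between the cells of $\buchc$ containing $\sigma$ and the faces of $\square^n$ containing $\foldbuchc(\sigma)$. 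Applying this bijection to $\tau = \sigma_{\max}$ together with the equivariance gives $g\sigma_{\max} = \sigma_{\max}$, as required.

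The anticipated main obstacle is verifying this local bijectivity of $\foldbuchc$ on the star of a non--branch cell. This is a purely local statement in the complement of the branch locus, where $\buchc$ is canonically identified with $\uchc$, so it should carry over verbatim from the non--branched setting treated in \cite[Lemma 5.4]{LR24}; the authors explicitly signal that their argument goes through in this case.
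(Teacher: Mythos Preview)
Your proposal is correct and follows exactly the approach the paper indicates (the paper simply defers to \cite[Lemma 5.4]{LR24}, and you have spelled out that argument: $G$--invariance of the height gives one inclusion, and $G$--equivariance of the folding $\foldbuchc$ together with the local bijection between cells containing $\sigma$ and faces of $\square^n$ containing $\foldbuchc(\sigma)$ gives the other). One small wording fix: you should not say ``$\sigma$ lies away from the branch locus'' --- a positive-dimensional cell $\sigma$ may well contain branch points in its boundary --- but rather that an \emph{interior} point of $\sigma$ does, which is all you need since the set of cells containing $\sigma$ is determined locally at any interior point.
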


We now consider the case in which $C$  contains a branch vertex.
(Note that if $C$ contains a branch vertex $v$, then $v$ is necessarily the vertex of minimal height.)

\begin{lemma}\label{lem:branched cube stab}
Let $C$ be a cube in $\dccx$ such that the vertex of minimal height $v$ of $C$ is a branch vertex.
\begin{enumerate}

    \item \label{item:stab vertex branched} If $C=v$, then $\stab{G}{C}$ is a maximal parabolic subgroup.

    \item \label{item:stab cube branched} If $C\neq v$, then $\stab GC = 1$.
    
\end{enumerate}
\end{lemma}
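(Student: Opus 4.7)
The plan is as follows. Part~\eqref{item:stab vertex branched} is immediate from Lemma~\ref{lem:vertex stab equals cell stab}: the stabilizer of $C=v$ coincides with the stabilizer of the branch point $\hat y \in \buchc$ dual to $v$, which is a maximal parabolic subgroup by the construction in \S\ref{sec:auxiliary covering spaces}. All the work lies in part~\eqref{item:stab cube branched}, which I would prove by producing, for any $g \in \stab{G}{C}$, a fixed point of $g$ in $\buchc$ that is \emph{not} a branch point, and then invoking the fact that the action of $G$ on $\buchc$ is free away from branch points.

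The key step is to reduce from the cube $C$ to a single positive-dimensional cell of $\buchc$ preserved by $g$. Every cube in $\dcc\buchc$ has a unique vertex of maximum height, dual to a single top cell; in our case this top vertex $w$ is dual to a cell $\mu$ of $\buchc$ with $\hat y \subseteq \mu$. Since the height function is $G$-equivariant, $g$ must fix both $v$ and $w$, so it fixes $\hat y$ and preserves $\mu$; moreover $\dim\mu \geq 1$ because $C \neq v$. I would then replay Case~2 of the proof of Lemma~\ref{lem:trivial int stab} applied to the pair $(\mu,\hat y)$: let $p_1,\dots,p_m$ (with $m \geq 1$) be the $0$-cells of $\mu$ adjacent to $\hat y$ at minimum distance, and let $z \in \mu$ be their barycenter inside the convex cell $\mu$; then $g$ permutes the $p_i$ and fixes $z$.

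The main obstacle is to verify that $z$ is not a branch point, so that freeness of the action away from branch points forces $g=1$. For this I would use that the cone points $y_1,\dots,y_r$ of $\hc$ arise from coning off \emph{distinct} components of $L$, and hence no $0$-cell of $\hc$ adjacent to some $y_i$ is itself a cone point. This property lifts through $\pibuchc:\buchc \to \hc$ to show that any $0$-cell of $\buchc$ adjacent to a branch point is non-branch. Consequently each $p_i$ is a non-branch $0$-cell of $\mu$, and therefore the barycenter $z$ inside $\mu$ is not a branch point either, concluding the argument.
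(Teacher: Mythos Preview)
Your proposal is correct and follows essentially the same approach as the paper. The paper's proof is more compressed: it observes that $g$ fixes the minimal-height vertex $v$, identifies the cell $\sigma$ dual to (the top vertex of) $C$, and then simply invokes Lemma~\ref{lem:trivial int stab} to conclude that $\stab G\sigma \cap \stab G{\hat y}=1$. You instead make explicit the role of the unique maximal-height vertex $w$ (which is what the paper's somewhat loose phrase ``the cell dual to $C$'' really points to), and then replay Case~2 of Lemma~\ref{lem:trivial int stab} rather than citing it; your added justification that the barycenter $z$ is not a branch point fills in a detail the paper leaves implicit.
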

\begin{proof}
The proof of \eqref{item:stab vertex branched} just follows from Lemma~\ref{lem:vertex stab equals cell stab} and \eqref{item:stab cell branched} in Lemma~\ref{lem:stab cell}.
So, let us prove \eqref{item:stab cube branched}.
Let $g \in \stab GC$. Since the height function is invariant, $g$ must fix $v$, by uniqueness of the vertex of minimal height. So, $g\in \stab Gv$ too.
If $\sigma$ is the cell dual to $C$ and $\hat y$ is the branch point dual to $v$, then it follows from Lemma~\ref{lem:vertex stab equals cell stab}  that $g\in \stab GC \cap \stab Gv = \stab G\sigma \cap  \stab{G}{\hat y}.$
But this intersection is trivial by Lemma~\ref{lem:trivial int stab}.

\end{proof}


We are now ready to collect the ideas, and prove that the action of $G$ on $\dcc \buchc$ looks like a relatively geometric action in the sense of \cite[Definition 1.1]{EG22} or \cite[Definition 1.9]{GM22}, but with some larger stabilizers ``away from the parabolics''.
Recall from \S\ref{sec:rel hyp} that $\mathcal P$ is a set of representatives of the conjugacy classes of the subgroups $\pi_1(L_i)$, where $L_i$ is a connected component of $L$.

\begin{maintheoremc}{A}\label{thm:action main}
The action of the relatively hyperbolic group $G=\pi_1(\relhyp KL)$ on the $\cat 0$ cubical complex $\dcc \buchc$ satisfies the following properties.
\begin{enumerate}
    \item  $\leftQ{\dcc \buchc}{G}$ is compact.
    \item Each $P \in \mc{P}$ acts elliptically on $\dcc \buchc$.
    
    \item For each cube $C$ of $\dcc \buchc$, $\stab{G}{C}$ is either maximal parabolic, or else is full relatively quasi-convex, hyperbolic, and virtually compact special. 
\end{enumerate}
\end{maintheoremc}
\begin{proof}
First of all, $\dcc \buchc$ is $\cat 0$ by Theorem~\ref{thm:dual cubical complex is CAT(0)}.
The action of $G$ on it is cocompact because $K$ is finite.
Each $P\in \mathcal P$ fixes a branch point in $\buchc$, and therefore it fixes the dual vertex in $\dcc \buchc$.
This proves the first two statements, so let us prove the third one.
Let $C\subseteq \dcc \buchc$ be a cube.
There are three cases to consider (recall that if a cube contains a branch vertex, then that vertex is the vertex of minimal height).

First, if $C$ is a branch vertex, then $\stab GC$ is a  maximal parabolic subgroup of $G$ by \eqref{item:stab vertex branched} in Lemma~\ref{lem:branched cube stab}. 
Next, consider the case that $C$ is not a branch vertex but its vertex of minimal height $v$ is a branch vertex.
    By \eqref{item:stab cube branched} in Lemma~\ref{lem:branched cube stab} we have that $\stab GC =1$. 
 Finally, suppose that $C$ is not a branch vertex and its vertex of minimal height $v$ is not a branch vertex.
    In this case, $\stab GC = \stab Gv$ by Lemma~\ref{lem:cube stab equals min vertex stab}.
    Moreover, by Lemma~\ref{lem:vertex stab equals cell stab} this also equals $\stab G\sigma$, where $\sigma $ is the cell of $\buchc$ dual to $v$.
    By \eqref{item:stab cell unbranched} in Lemma~\ref{lem:stab cell} we know that this is a hyperbolic and virtually compact special group.
    Moreover, by Lemma~\ref{lem:quasiconvex_stabilizers} we also know that it is a full relatively quasiconvex subgroup of $(G,\mathcal P)$. 
    This concludes the proof.
\end{proof}

\begin{remark}\label{rem: strong}
It follows from Remark~\ref{rem: strong 0} that if a cube stabilizer is not a maximal parabolic subgroup then it is strongly relatively quasiconvex.
\end{remark}

\begin{maintheoremc}{B}\label{thm: res fin special main}
Let $G=\pi_1(\relhyp KL)$ and $\mathcal P$ be as above.
Then the following hold.
\begin{enumerate}
    \item \label{item: res fin main} If each $P\in \mathcal P$ is residually finite, then $G$ is residually finite and each $P\in \mathcal P$ is separable.

    \item  \label{item: special main} If each $P\in \mathcal P$ is hyperbolic and virtually compact special, then $G$ is hyperbolic and virtually compact special.
\end{enumerate}
\end{maintheoremc} 
\begin{proof}
By Theorem~\hyperref[thm:action main]{A}, the pair $(G,\mathcal P)$ satisfies the conditions in Theorem~\ref{thm: appendix main} from the Appendix. 
Since the trivial subgroup and the peripheral subgroups are full relatively quasiconvex in $(G,\mathcal P)$, the statement in \eqref{item: res fin main} follows.

To prove \eqref{item: special main}, 
we argue as follows.
First of all, if a group is hyperbolic relative to a hyperbolic subgroup then it is itself hyperbolic, see \cite[Corollary 2.41]{OS06}.
So, we have that $G$ is hyperbolic.
Consider the action of $G$ on 
 the $\cat 0$ cubical complex $\dcc{\buchc}$ and let $H$ be a non-trivial cube stabilizer.
We claim that $H$ is quasiconvex in $G$ and virtually compact special.
Indeed, by Theorem~\hyperref[thm:action main]{A} we have two cases.
If $H$ is a maximal parabolic, then it is quasiconvex by \cite[Corollary 8.2]{HR10}, and virtually compact special by assumption.
Otherwise, $H$ is virtually compact special, and strongly relatively quasiconvex in $(G,\mathcal P)$; see Remark~\ref{rem: strong}. 
By \cite[Theorem 1.9]{OS06} $H$ is quasi-isometrically embedded in $G$, hence $H$ is quasiconvex, since $G$ is hyperbolic.
It follows that the action of $G$ on $\dcc{\buchc}$ satisfies the conditions in \cite[Theorem D]{GM23}, so $G$ is virtually compact special.
\end{proof}


\section{Applications to manifolds}\label{sec:applications}
We collect some applications to the study of aspherical manifolds in \S\ref{sec:new examples}, cobordism of manifolds in \S\ref{sec: cobordism}, and (non-)triangulability of manifolds in \S\ref{sec: triangulability}.

\subsection{Aspherical manifolds with residually finite fundamental groups}\label{sec:new examples}
The purpose of this section is to obtain 
examples of closed aspherical manifolds whose fundamental groups have interesting algebraic properties.
In Theorem~\ref{thm: new main thm} we construct closed aspherical manifolds in each dimension $n\geq 6$, whose fundamental group is residually finite.
These examples can be chosen to be Riemannian and non-positively curved, and even negatively curved for $n\geq 9$; see Remark~\ref{rmk: new neg curved}.
These examples are new, in the sense that they are not homotopy equivalent to manifolds for which residual finiteness of the fundamental group was previously known by other methods.
Finally, in Theorem~\ref{thm: new special} we construct negatively curved Riemannian manifolds of dimension $n\geq 5$, which are not locally symmetric and whose fundamental groups are virtually compact special.

\bigskip
All of the examples constructed in this section are based on a procedure that associates to a closed triangulable manifold $M$ another closed triangulable manifold $\hypmt M$, which contains $M$ as a codimension-1 submanifold. 
We call  $\hypmt M$  the \textit{hyperbolized mapping torus} of $M$.
The construction of $\hypmt M$ is as follows, see Figure~\ref{fig:hypmt}.
Let $K_0$ be a triangulation of $M$.
Extend $K_0$ to a triangulation $K_1$ of $M\times [0,1]$. 
Glue two copies of $K_1$ via the identity on $M\times \{0\}$ to obtain a triangulation $K$ of $M\times [-1,1]$.
Denote by $L$ the boundary of $K$; it consists of two components, each homeomorphic to $M$.
Notice that $j:K\to K, (m,t)\mapsto (m,-t)$ is a simplicial involution exchanging the two components of $L$.
Now, apply relative strict hyperbolization to the pair $(K,L)$ to obtain a compact manifold  with boundary $\relhyp {K}{L}$ whose boundary $\partial \relhyp {K}{L}$ is homeomorphic to $M\times \{\pm 1\}$.
(Alternatively, one could apply strict hyperbolization to the simplicial suspension of $K_0$, and then remove sufficiently small open neighborhoods of the cone points.)
Finally, let $\hypmt M$ be the closed manifold obtained by gluing the two boundary components of $\relhyp {K}{L}$ together.
By construction,  $\hypmt M$ contains  a $\pi_1$-injective codimension-1 submanifold homeomorphic to $M$ arising from the two boundary components of $\relhyp {K}{L}$ that have been glued together. 
We keep referring to this submanifold as $M$.

\begin{figure}[h]
\centering
\def\svgwidth{.8\columnwidth}
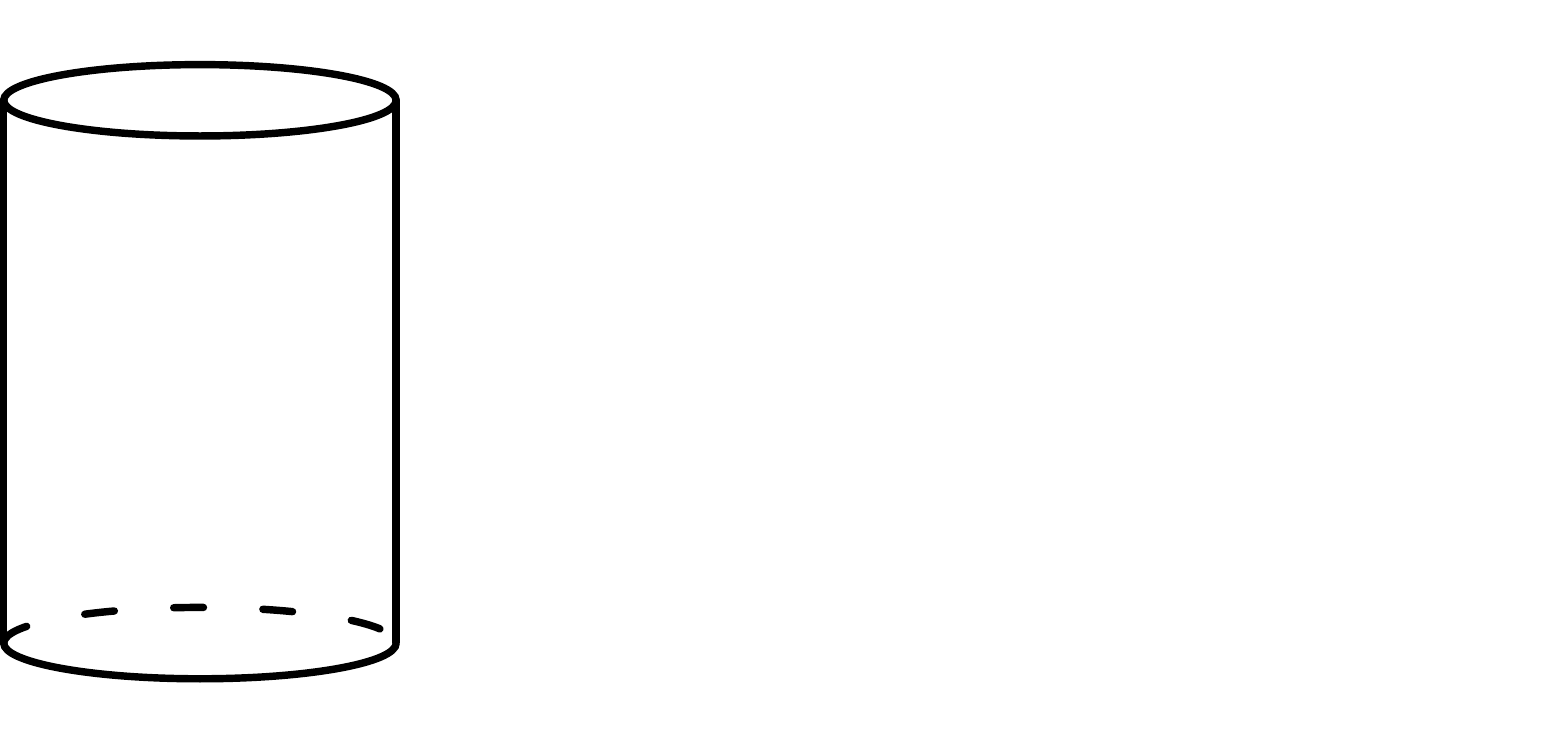
    \caption{The construction of the hyperbolized mapping torus $\hypmt M$.}
    \label{fig:hypmt}
\end{figure}

\begin{lemma}\label{lem: new res fin}
    If $\pi_1(M)$ is residually finite, 
    then $\pi_1(\hypmt{M})$ is residually finite.
\end{lemma}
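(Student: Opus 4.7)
The strategy is to realize $\pi_1(\hypmt M)$ as an HNN extension, recognise it as a relatively hyperbolic group with peripheral $\pi_1(M)$, and verify that it satisfies the conclusion of Theorem~\hyperref[thm:action main]{A}, so that Theorem~\hyperref[thm: res fin special main]{B}(1) applies directly.

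Since $\hypmt M$ is obtained from $\relhyp{K}{L}$ by identifying its two boundary copies of $M$ via the involution $j$, Seifert--van Kampen presents
\[
\pi_1(\hypmt M)\;=\;\bigl\langle H,\,t\bigm|\,tP_1t^{-1}=P_2\bigr\rangle,
\]
where $H=\pi_1(\relhyp{K}{L})$ and $P_1,P_2\leq H$ are the two peripheral subgroups of $H$ coming from the two boundary components, each isomorphic to $\pi_1(M)$. A combination theorem for relatively hyperbolic groups (amalgamation along peripheral subgroups) then shows that $\pi_1(\hypmt M)$ is hyperbolic relative to a single conjugacy class $\{\pi_1(M)\}$, as the stable letter $t$ fuses $P_1$ onto $P_2$.

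To produce the required $\cat 0$ cube complex I plan to glue copies of $\dcc{\buchc}$ along branch vertices following the Bass--Serre tree $T$ of this HNN extension. The universal cover $\widetilde{\hypmt{M}}$ decomposes as a tree of copies of $\ucrhc{K}{L}$ indexed by $T$, glued along lifts of $M$; coning off each such lift produces a branched analogue $\widehat{Y}$ which, piece by piece, looks like $\buchc$. The dual cube complex $\dcc{\widehat{Y}}$ is defined exactly as in \S\ref{sec:dual complex}, and the action of $\pi_1(\hypmt M)$ on $\widehat{Y}$ induces a cocompact cubical action on $\dcc{\widehat{Y}}$ in which each conjugate of $\pi_1(M)$ acts elliptically at the corresponding branch vertex.

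The main technical obstacle is to verify that $\dcc{\widehat Y}$ is $\cat 0$. Gromov's link condition at a branch vertex reduces, exactly as in Lemma~\ref{lem:link branch point is flag}, to the flagness of the universal cover of a component of $L$, while at non-branch vertices it reduces to the fact already proved for $\dcc{\buchc}$. Simple-connectedness follows from the tree-of-spaces structure indexed by $T$: edge-loops are first nullhomotoped inside a single $\dcc{\buchc}$-piece via Theorem~\ref{thm:dual cubical complex is CAT(0)}, then the pieces are combined along $T$. The cube-stabilizer analysis proceeds as in \S\ref{sec:proofs}: branch vertex stabilizers are maximal parabolics conjugate to $\pi_1(M)$, and the remaining cube stabilizers are hyperbolic, virtually compact special, and full relatively quasi-convex in $(\pi_1(\hypmt M),\{\pi_1(M)\})$, inherited from Lemmas~\ref{lem:stab cell} and~\ref{lem:quasiconvex_stabilizers} applied to the pieces. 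Once Theorem~\hyperref[thm:action main]{A}'s conclusion is established for $(\pi_1(\hypmt M),\{\pi_1(M)\})$, Theorem~\hyperref[thm: res fin special main]{B}(1) together with the assumption that $\pi_1(M)$ is residually finite yields residual finiteness of $\pi_1(\hypmt M)$.
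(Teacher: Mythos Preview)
Your strategy is considerably more ambitious than the paper's and carries real technical overhead that the paper avoids entirely. The paper does \emph{not} attempt to verify Theorem~\hyperref[thm:action main]{A} for $\pi_1(\hypmt M)$. Instead, it writes $\pi_1(\hypmt M)$ as the HNN extension of $G=\pi_1(\relhyp KL)$ along $H_1\cong\pi_1(M)$ via the automorphism $j_\ast$ induced by the simplicial involution, and then invokes a classical criterion of Baumslag--Tretkoff \cite[Lemma~4.4]{BT78}: such an HNN extension is residually finite provided $G$ is finitely generated and residually finite, and $H_1$ is separable in $G$. Both conditions follow immediately from Theorem~\hyperref[thm: res fin special main]{B}\eqref{item: res fin main} applied to $(G,\{H_1,H_2\})$, since $H_1,H_2\cong\pi_1(M)$ are residually finite by hypothesis. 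No new cube complex is built.

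Your construction of $\widehat Y$ is problematic as written. Coning off a two-sided copy of $\widetilde M$ inside $\widetilde{\hypmt M}$ does not produce a space that ``piece by piece looks like $\buchc$'': in $\buchc$ a branch point has a neighborhood isometric to a cone on $\widetilde L_i$, whereas attaching a cone to a two-sided codimension-one submanifold creates an extra fin, and there is no global folding map to $\square^n$ giving a coherent mirror structure. If instead you mean a tree of copies of $\dcc{\buchc}$ wedged at branch vertices along the Bass--Serre tree, this can indeed be made $\cat 0$, but then you must still prove that the non-parabolic cube stabilizers are full relatively quasiconvex in $(\pi_1(\hypmt M),\{\pi_1(M)\})$ rather than merely in $(G,\{H_1,H_2\})$; this requires a transitivity argument (vertex groups of the HNN extension are full relatively quasiconvex, and relative quasiconvexity passes up through compatible peripheral structures) that you do not supply. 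Your route is likely completable, but the Baumslag--Tretkoff argument is both shorter and cleaner.
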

\begin{proof}
First of all, note that $\pi_1(\hypmt{M})$ splits as a nice HNN extension.
Indeed, the involution $j:K\to K$ defined above induces an automorphism  $j_\ast$ of $G=\pi_1(\relhyp KL)$ that exchanges the subgroups $H_1,H_2$ corresponding to the two boundary components of $\relhyp KL$.
So, $\pi_1(\hypmt M)$ can be presented as the HNN extension  
$$\langle t, G \mid tht^{-1} = j_\ast (h), h \in H_1 \rangle.$$
The problem of residual finiteness for HNN extensions of this type was considered by Baumslag--Tretkoff in \cite[Lemma 4.4]{BT78}.
More precisely, they proved that an HNN extension induced by an automorphism of $G$ as above is residually finite as soon as $G$ and $H_1$ satisfy the following two properties: (1) $G$ is finitely generated and residually finite; (2) for any $x_1,\dots,x_n \in G\setminus H_1$ there is a normal subgroup $N$ of finite index in $G$ such that $x_iH_1\cap N = \varnothing$ for all $i=1,\dots, n$.
We will now verify that these two properties are satisfied in our case.

To check (1), we argue as follows. 
We know that $G$ is finitely generated and  hyperbolic relative to  $\mathcal P = \{H_1,H_2\}$.
Notice that $H_1,H_2$ are both isomorphic to $\pi_1(M)$, which is assumed to be residually finite. 
So, by Theorem~\hyperref[thm: res fin special main]{B} we have that $G$ is residually finite.

Next, note that (2) is satisfied as soon as $H_1$ is separable in $G$.
To see this, let $x_1,\dots,x_n \in G\setminus H_1$.
If $H_1$ is separable in $G$, then for each $i=1,\dots, n$ we can find a normal subgroup of finite index $N_i\leq G$ such that $H_1\leq N_i$ and $x_i\notin N_i$.
Let $N=\cap N_i$.
Then $N$ is still normal and of finite index in $G$. 
Moreover, $H_1\leq N$, and thus $x_iH_1\subseteq x_iN$.
Since $x_i\notin N$, it follows that $x_iN\cap N = \varnothing$, hence $x_iH_1\cap N=\varnothing$ for all $i=1,\dots, n$, as desired.
To conclude, note that our $H_1$ is indeed separable in $G$ thanks to Theorem~\hyperref[thm: res fin special main]{B} and the assumption that $\pi_1(M)$ is residually finite.
\end{proof}

\begin{remark}[Smoothness]\label{rem: new smooth}
When $M$ is smooth, one can ensure that $\hypmt{M}$ is smooth too, by applying strict hyperbolization with a sufficiently large hyperbolizing cell to a smooth triangulation $K$ of $M\times [-1,1]$ as in \cite{O20}.
Moreover, if $M$ admits a non-positively (resp. negatively) curved Riemannian metric, then $\hypmt{M}$ admits a Riemannian metric of non-positive (resp. negative) curvature in which $M$ embeds totally geodesically; see \cite{NP13} for details.
The sectional curvatures of $\hypmt M$ can be pinched arbitrarily close to $-1$ along planes that are sufficiently far away from the tangent bundle to $M$.
While smoothness is not essential in the following discussion, we will work in this Riemannian setting for convenience.
\end{remark}

We now use the above construction to obtain the desired examples. 
Let $\Lambda$ be a torsion-free uniform lattice in $\slr 3$ and let  $M=\doubleQ{\slr 3}{\sor 3}{\Lambda}$ be the corresponding locally symmetric space.
Then $M$ is a closed non-positively curved Riemannian manifold of dimension $5$, whose fundamental group $\pi_1(M)=\Lambda$ is residually finite and has property (T).
Let $N_5=M$ and recursively define $N_{n}=\hypmt{N_{n-1}}$.

\begin{theorem}\label{thm: new main thm}
  For all $n\geq 6$, $N_n$ is a closed Riemannian $n$-manifold 
 of non-positive curvature with residually finite fundamental group and not homotopy equivalent to any of the following:
    \begin{enumerate}
        \item \label{item:loc sym} a locally symmetric space,
        \item \label{item:gt} a Gromov-Thurston manifold,
        \item \label{item: hypman} a strictly hyperbolized manifold,
        \item \label{item: kahler} a closed K\"ahler manifold.
    \end{enumerate}
\end{theorem}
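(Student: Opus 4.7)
The plan has two halves: first verify that $N_n$ is a closed smooth non-positively curved Riemannian $n$-manifold with residually finite fundamental group, then rule out each of the four listed classes. For the structural part, I would induct on $n \geq 5$. The base case $N_5 = \doubleQ{\slr 3}{\sor 3}{\Lambda}$ is a closed smooth locally symmetric manifold of non-positive curvature, with fundamental group $\Lambda$ residually finite by Mal'cev (as a linear group). For the inductive step, Remark~\ref{rem: new smooth} guarantees that $\hypmt{N_{n-1}}$ is a closed smooth Riemannian $n$-manifold of non-positive curvature once one uses a sufficiently large smooth hyperbolizing cell, and Lemma~\ref{lem: new res fin} promotes residual finiteness from $\pi_1(N_{n-1})$ to $\pi_1(N_n)$.

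The non-homotopy-equivalence statements rest on two features of $\pi_1(N_n)$: by construction $M$ embeds as a $\pi_1$-injective submanifold of $N_n$ via the iterated chain $M \subseteq N_6 \subseteq \dots \subseteq N_n$, so $\Lambda \leq \pi_1(N_n)$; and the mapping-torus structure presents $\pi_1(N_n)$ as a non-trivial HNN extension of $G = \pi_1(\relhyp{K_{n-1}}{L_{n-1}})$ with stable letter $t$, inducing a surjection $\pi_1(N_n) \twoheadrightarrow \zz$. Items (2) and (3) then follow at once: $\Lambda$ contains a unipotent $\zz^2$, so $\pi_1(N_n)$ contains $\zz^2$ and is therefore not word hyperbolic, whereas Gromov--Thurston manifolds and Charney--Davis strictly hyperbolized manifolds are negatively curved and thus have word hyperbolic fundamental groups.

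For (1), if $N_n \simeq Y$ with $Y$ a closed non-positively curved locally symmetric space, then $\pi_1(N_n) \cong \Gamma$ is a cocompact lattice in a semisimple Lie group $G_0$ of non-compact type. The approach is a case analysis on the simple factors of $G_0$ using Margulis' normal subgroup theorem (to exclude higher-rank simple factors, since they would prohibit the infinite-index infinite normal subgroup $\ker(\pi_1(N_n) \twoheadrightarrow \zz)$), Howe--Moore together with the fact that torsion-free rank-one lattices contain no infinite property-(T) subgroup (while $\Lambda$ does have property (T)), and Margulis superrigidity to control the embedding of $\Lambda$ in mixed reducible cases. For (4), the HNN splitting yields a non-trivial action of $\pi_1(N_n)$ on a Bass--Serre tree with edge stabilizer $H_1 \cong \pi_1(N_{n-1})$ of infinite index in $G$ (as a peripheral of the relatively hyperbolic pair $(G, \mc P)$); combining this with the property-(T) subgroup $\Lambda \leq H_1$ and Delzant--Gromov-type restrictions on splittings of Kähler groups (or, alternatively, arranging an odd $b_1$ by a careful choice of triangulation in the construction) rules out (4). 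The main obstacle will be the reducible case of (1), where the property-(T) subgroup $\Lambda$ must be pinned into a higher-rank factor via superrigidity, and (4), where one needs to pin down the precise form of the Kähler obstruction; the remaining steps invoke classical structure results for lattices together with the material already developed in the paper.
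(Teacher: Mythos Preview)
Your structural setup and the argument for (2) and (3) are sound, though note that a \emph{uniform} lattice $\Lambda$ in $\slr 3$ contains no unipotent elements; the copy of $\zz^2$ you want comes from a maximal flat, not from unipotents. With that correction your argument for (2) and (3) is actually simpler than the paper's, which instead invokes the cubulation results of Giralt and of Lafont--Ruffoni together with Niblo--Reeves (a property~(T) group cannot act properly on a $\cat 0$ cube complex). Your route uses only that $\pi_1(N_n)\supseteq \zz^2$ is non-hyperbolic; the paper's route, by contrast, also applies verbatim when $M$ is replaced by a quaternionic hyperbolic manifold (Remark~\ref{rmk: new neg curved}), where $\Lambda$ is itself hyperbolic and contains no $\zz^2$.

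There is a genuine gap in your treatment of (1). The claim that ``torsion-free rank-one lattices contain no infinite property-(T) subgroup'' is false: lattices in $\spr n$ and in $F_4^{(-20)}$ have property~(T) themselves. Your case split via the Normal Subgroup Theorem handles irreducible higher-rank lattices, but neither of your two branches covers these rank-one (T) groups, and the reducible case is left to a vague appeal to superrigidity. The paper sidesteps the case analysis entirely: since $\pi_1(N_n)$ is an HNN extension it fails property~(FA), hence fails property~(T), and this single observation rules out all Lie groups with~(T) (higher rank, $\spr n$, $F_4^{(-20)}$) at once. For the remaining cases $\ishyp n$ and $\operatorname{Isom}(\comphyp n)$ the paper uses the Delorme--Guichardet/de~la~Harpe--Valette fixed-point theorem: a group with property~(T) acting isometrically on real or complex hyperbolic space has a global fixed point, contradicting the free action of $\Lambda\subseteq\pi_1(N_n)$.

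Your plan for (4) is too sketchy to assess; neither the Delzant--Gromov restrictions on splittings of K\"ahler groups nor an odd-$b_1$ argument is worked out, and it is unclear either can be pushed through for this specific construction. The paper's argument is quite different and much more direct: it first upgrades the homotopy equivalence to a homeomorphism via the Borel conjecture for $\cat 0$ groups (Bartels--L\"uck), so that $N_n\setminus N_{n-1}$ becomes homeomorphic to an open subset of a closed K\"ahler manifold, and then invokes Belegradek's theorem \cite[Theorem~10.5]{BE07} that the interior of a relatively strictly hyperbolized manifold cannot occur this way.
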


Here, a \textit{Gromov--Thurston manifold}  is one of the examples constructed in \cite{GT87}, and a \textit{strictly hyperbolized manifold} is the result of applying Charney-Davis strict hyperbolization or Ontaneda's Riemannian hyperbolization to any closed triangulable manifold.
Moreover, recall that all the examples of negatively curved manifolds constructed by Mostow--Siu in \cite{MS80}, Deraux in \cite{DE05}, or Stover--Toledo in \cite{ST22,ST22n} are K\"ahler manifolds.

\begin{proof}
First of all, by construction $N_n$ is a closed non-positively curved Riemannian manifold of dimension $n$ that contains $N_{n-1}$ as a totally geodesic codimension-1 submanifold; see Remark~\ref{rem: new smooth}.
In particular, since $N_5=M$, we have that $\pi_1(N_n)$ contains a subgroup isomorphic to $\Lambda=\pi_1(M)$. 
Moreover, $\pi_1(N_n)$ is residually finite by Lemma~\ref{lem: new res fin}.

By construction $\pi_1(N_n)$ splits as an HNN extension, hence it acts on the associated Bass-Serre tree without a global fixed point.
In particular, it does not have property (FA), hence it does not have property (T), see \cite{WA82}. 
It follows that $\pi_1(N_n)$ cannot be a lattice in a Lie group of higher rank, nor in the isometry group of a quaternionic hyperbolic space or the Cayley hyperbolic plane, because all of these Lie groups have property (T). 
To prove \eqref{item:loc sym} we need to deal with the possibility that $\pi_1(N_n)$ is a lattice  in the isometry group of a real or complex hyperbolic space. 
If this were the case, we would obtain a free isometric action of $\Lambda \subseteq \pi_1(N_n)$ on a real or complex hyperbolic space.
But this would be impossible: since  $\Lambda$ has property (T), any isometric action on a real or complex hyperbolic space must have a fix point by Corollary 23 in \cite{DV89}. 
This proves \eqref{item:loc sym}.

To prove \eqref{item:gt} and \eqref{item: hypman} we argue as follows.
Suppose by contradiction that $N_n$ is homotopy equivalent to a Gromov-Thurston manifold or a strictly hyperbolized manifold. 
Then $\pi_1(N_n)$ acts geometrically on a finite dimensional $\cat 0$ cubical complex by \cite{GI17} or \cite{LR24} respectively.
In particular, we obtain a proper cubical action of $\Lambda$ on a $\cat 0$ cube complex.
Since $\Lambda$ has property (T), this is in contradiction with Theorem B in \cite{NR97}.

Finally, we prove \eqref{item: kahler}. 
Suppose by contradiction that $N_n$ is homotopy equivalent to a closed K\"ahler manifold $X$.
Since $N_n$ is non-positively curved, $N_n$ and $X$ are 
 actually homeomorphic by \cite{BL12}.
Then $N_n\setminus N_{n-1}$ is homeomorphic to an open subset of $X$.
But by definition of $N_n$ as $\hypmt{N_{n-1}}$, we have that $N_n\setminus N_{n-1}$ is the interior of a manifold  obtained by relative strict hyperbolization on $N_{n-1}\times [-1,1]$, so we reached a contradiction with \cite[Theorem 10.5]{BE07}.
\end{proof}

\begin{remark}
The crucial property of $M$ that we have used above is that $M$ is a closed aspherical manifold whose fundamental group is residually finite and contains a subgroup $\Lambda$ with property (T).
Notably, these properties are preserved when passing from $M$ to its hyperbolized mapping torus $\hypmt M$.
So, given any such $M$ one can recursively define a sequence of manifolds as in Theorem~\ref{thm: new main thm} and certain other prescribed features, as in Remark~\ref{rmk: new neg curved} below.
\end{remark}

\begin{remark}[Negatively curved examples]\label{rmk: new neg curved}
Let $\Lambda '$ be a torsion-free uniform lattice in the isometry group of the quaternionic hyperbolic plane $\spisom 2 = \spr 2$ and let 
$M '= \quathyp 2/\Lambda '$ be the associated $8$-dimensional manifold.
Now, let $N_8'=M'$ and recursively define $N_{n}'=\hypmt{N_{n-1}'}$.
Then for all $n\geq 9$, the manifold $N_n'$ satisfies the conclusion of Theorem~\ref{thm: new main thm}, with the added feature of negative curvature; see Remark~\ref{rem: new smooth}.
In particular, $\pi_1(N'_n)$ is a residually finite hyperbolic group.
\end{remark}
\begin{remark}[Property (T) vs Haagerup property]
    Note that the above constructions provide examples of one-ended $\cat 0$ and hyperbolic groups that do not have property (T), as they split as HNN extensions, and do not have the Haagerup property, as they contain an infinite subgroup with (T).
    On the other hand, all these groups have relative property (T) with respect to that subgroup;  see \cite[\S 1.4]{BHV08}.
\end{remark}

The negatively curved examples from Remark~\ref{rmk: new neg curved} do not have cubulated fundamental group, because they are constructed starting with a manifold $M$ whose fundamental group has property (T).
On the other hand, if one starts with a manifold $M$ whose fundamental group is cubulated (e.g., a real hyperbolic manifold defined by a uniform real hyperbolic lattice of simple type; see \cite{HW12}), then one can obtain negatively curved examples whose fundamental groups are cubulated too.
This is based on the following lemma, which is the analogue of Lemma~\ref{lem: new res fin}.

\begin{lemma}\label{lem: new special}
   If $M$ be a negatively curved Riemannian manifold and $\pi_1(M)$ is virtually compact special, then $\hypmt M$ is a negatively curved Riemannian manifold and $\pi_1(\hypmt M)$ is virtually compact special.
\end{lemma}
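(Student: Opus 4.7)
The plan is to handle the geometric and group-theoretic claims separately.

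For the geometric statement, we invoke Remark~\ref{rem: new smooth}: starting from a smooth triangulation of $M \times [-1,1]$ and applying relative strict hyperbolization with a sufficiently large hyperbolizing cell, one obtains a Riemannian metric of negative curvature on $\hypmt M$ in which $M$ sits as a totally geodesic codimension-$1$ submanifold. In particular $\pi_1(\hypmt M)$ is word-hyperbolic, and $\pi_1(M)$ embeds as a convex-cocompact, hence quasiconvex, subgroup.

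For virtual compact specialness, we first dispatch $G := \pi_1(\relhyp{K}{L})$. The peripheral subgroups in $\mathcal P$ are all copies of $\pi_1(M)$, which is virtually compact special by hypothesis and hyperbolic since $M$ is closed and negatively curved. Theorem~\hyperref[thm: res fin special main]{B}\eqref{item: special main} then implies that $G$ is itself hyperbolic and virtually compact special.

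Exactly as in the proof of Lemma~\ref{lem: new res fin}, the final gluing realizes $\pi_1(\hypmt M)$ as the HNN extension
\[
\pi_1(\hypmt M) = \langle t,\, G \mid t h t^{-1} = j_*(h),\ h \in H_1 \rangle,
\]
where $H_1, H_2 \cong \pi_1(M)$ are the two peripheral subgroups of $G$ corresponding to the boundary components of $\relhyp{K}{L}$, and $j_*\colon H_1 \to H_2$ is induced by the boundary-swapping simplicial involution. Since $G$ is word-hyperbolic, its peripheral subgroups are quasiconvex in $G$ in the hyperbolic sense (cf.\ \cite[Corollary 8.2]{HR10} as already used in the proof of Theorem~\hyperref[thm: res fin special main]{B}); quasiconvexity of $H_1$ inside $\pi_1(\hypmt M)$ itself follows from the fact that $M$ is totally geodesic in the negatively curved manifold $\hypmt M$.

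The endgame is Wise's Quasiconvex Hierarchy Theorem: the hyperbolic virtually compact special group $G$ admits a quasiconvex hierarchy terminating in trivial groups, and the HNN splitting along $H_1$ extends this to a quasiconvex hierarchy for $\pi_1(\hypmt M)$; since $\pi_1(\hypmt M)$ is hyperbolic, it is therefore virtually compact special. The main point requiring care is verifying compatibility of the HNN splitting with the hierarchy, i.e., quasiconvexity of the edge subgroup $H_1$ both in $G$ and in the total group, which is precisely what was established in the previous paragraph.
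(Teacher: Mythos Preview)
Your proof is correct and follows essentially the same route as the paper: invoke Remark~\ref{rem: new smooth} for the negatively curved metric with $M$ totally geodesic, apply Theorem~\hyperref[thm: res fin special main]{B}\eqref{item: special main} to get $G$ hyperbolic and virtually compact special, express $\pi_1(\hypmt M)$ as an HNN extension of $G$ over $H_1$, note that $H_1$ is quasiconvex in the hyperbolic group $\pi_1(\hypmt M)$ because $M$ is totally geodesic, and finish with Wise's combination theorem \cite[Theorem~13.1]{W21}. Your additional verification that $H_1$ is quasiconvex in $G$ is not strictly needed for the QVH argument (only quasiconvexity in the total group matters at the top level), but it does no harm.
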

\begin{proof}
As in the proof of Lemma~\ref{lem: new res fin}, 
$G=\pi_1(\relhyp KL)$ is hyperbolic relative to $\mathcal P=\{H_1,H_2\}$ and $\pi_1(\hypmt M)$ can be represented as the HNN extension of $G$ with respect to $H_1$.
Note that $H_i\cong \pi_1(M)$ is hyperbolic and virtually compact special, so by Theorem~\hyperref[thm: res fin special main]{B} we have that $G$ is hyperbolic and virtually compact special.
By Remark~\ref{rem: new smooth} we know that $\hypmt M$ admits a negatively curved Riemannian metric in which $M$ embeds as a totally geodesic submanifold.
In particular, $\pi_1(\hypmt M)$ is hyperbolic and $H_1$ is quasiconvex.
Then by \cite[Theorem 13.1]{W21} we have that $\pi_1(\hypmt M)$ is virtually compact special.
    \end{proof}

This can be used to construct examples of Riemannian manifolds of  negative curvature with cubulated fundamental group which are not real hyperbolic.
Note that manifolds with the same properties can also be obtained directly via strict hyperbolization (see \cite{O20,LR24}).
We suspect that such manifolds are different from the examples in Theorem~\ref{thm: new special}, but we are not aware of invariants that can  distinguish the two classes.

\begin{theorem}\label{thm: new special}
 The hyperbolized mapping torus construction $M\mapsto \hypmt M$ can provide for all $n\geq 5$   a closed Riemannian $n$-manifold $N''_n$  such that:
      \begin{enumerate}
          \item \label{item: new ex special neg} $N''_n$ has negative sectional curvature.
          \item \label{item: new ex special vcs} $\pi_1(N_n'')$ is virtually compact special.
          \item \label{item: new ex special loc sym} $N_n''$ is not homotopy equivalent to a locally symmetric space or K\"ahler manifold.
      \end{enumerate}
\end{theorem}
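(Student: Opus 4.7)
The plan is to follow the strategy of Theorem~\ref{thm: new main thm}, replacing the base manifold by one whose fundamental group is virtually compact special rather than one containing a property-(T) sublattice. Concretely, I would choose $M$ to be a closed negatively curved Riemannian manifold of some dimension $n_0\in\{4,5\}$ whose fundamental group is virtually compact special and which is itself not homotopy equivalent to a closed locally symmetric space or K\"ahler manifold. Natural candidates are Gromov--Thurston manifolds (cubulated by \cite{GI17}, hence virtually compact special via Agol's theorem) or strictly hyperbolized manifolds produced by Ontaneda's Riemannian hyperbolization (virtually compact special by \cite{LR24}, negatively curved by \cite{O20}). One then sets $N''_{n_0}=M$ and recursively $N''_n=\hypmt{N''_{n-1}}$ for $n>n_0$, which covers every dimension $n\geq 5$.

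Items (1) and (2) follow by induction: Remark~\ref{rem: new smooth} yields the negatively curved Riemannian structure at each step, and Lemma~\ref{lem: new special} preserves virtual compact specialness. For item (3), suppose by contradiction that $N''_n$ is homotopy equivalent to a closed K\"ahler manifold or closed locally symmetric space $X$. Since $N''_n$ is non-positively curved, the Bartels--L\"uck topological rigidity theorem \cite{BL12} upgrades this to a homeomorphism, so $N''_n\setminus N''_{n-1}$ sits inside $X$ as an open subset. By construction this open set is the interior of a relative strict hyperbolization of $N''_{n-1}\times[-1,1]$. In the K\"ahler case this contradicts \cite[Theorem~10.5]{BE07} verbatim, as in the proof of Theorem~\ref{thm: new main thm}(4).

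For the locally symmetric case I would split by curvature type, since the only candidates are negatively curved symmetric spaces. Quaternionic hyperbolic and Cayley hyperbolic $X$ are excluded because $\pi_1(X)$ would have property (T), while an infinite virtually compact special group acts properly with unbounded orbits on a $\cat 0$ cube complex and hence cannot have (T) by \cite{NR97}; complex hyperbolic $X$ is K\"ahler and is thus handled above. The real hyperbolic subcase is the main obstacle: the argument of Theorem~\ref{thm: new main thm}(1) relied on a property-(T) sublattice, which is no longer available in the virtually compact special setting. I expect to deal with it either by invoking a stronger open-subset version of \cite[Theorem~10.5]{BE07} that also excludes quotients of negatively curved symmetric spaces, or by exploiting the inductively constructed totally geodesic hypersurface $N''_{n-1}\subset N''_n$: under a hypothetical homeomorphism to a real hyperbolic $n$-manifold, $N''_{n-1}$ would correspond to an incompressible codimension-$1$ submanifold, producing rigidity constraints incompatible with the non-locally-symmetric nature of the base $M$.
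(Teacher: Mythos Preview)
Your setup and the treatment of items \eqref{item: new ex special neg}, \eqref{item: new ex special vcs}, and the K\"ahler and property-(T) cases of \eqref{item: new ex special loc sym} match the paper's proof essentially verbatim (the paper starts from a Gromov--Thurston or strictly hyperbolized $4$--manifold and uses Lemma~\ref{lem: new special} and Remark~\ref{rem: new smooth} inductively, then rules out higher-rank, quaternionic, Cayley, and complex-hyperbolic targets via the HNN splitting and the K\"ahler argument exactly as you do).

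The real hyperbolic subcase, which you flag as the main obstacle, is indeed the only genuine gap in your proposal, and neither of the two routes you suggest is what the paper does. There is no ``open-subset'' strengthening of \cite[Theorem~10.5]{BE07} covering real hyperbolic targets, and merely having an incompressible codimension-$1$ submanifold in a real hyperbolic manifold does not by itself give a contradiction. The missing idea is the involution built into the hyperbolized mapping torus: by construction $\hypmt{N''_{n-1}}$ carries an involution fixing the embedded copy of $N''_{n-1}$ pointwise (coming from $t\mapsto -t$ on $N''_{n-1}\times[-1,1]$). If $\pi_1(N''_n)$ were a uniform lattice $\Gamma$ in $\ishyp n$, Mostow rigidity would realize this involution as an isometric involution $g$ of $\hh^n$, whose fixed set at infinity is a round sphere $S^{k-1}$. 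The limit set $\Lambda(H)$ of $H=\pi_1(N''_{n-1})$ is a topological $S^{n-2}$ (Gromov boundary of a closed negatively curved $(n-1)$--manifold) and is contained in $\operatorname{Fix}_\infty(g)$ since $g$ preserves $H$; hence $\Lambda(H)=\operatorname{Fix}_\infty(g)=S^{n-2}$ is round, so $H$ acts geometrically on a totally geodesic $\hh^{n-1}$ and $N''_{n-1}$ is itself real hyperbolic. Iterating this down to $N''_4=M$ contradicts the choice of $M$ as non--real-hyperbolic. This Mostow-rigidity/involution step is the substantive content you are missing.
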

\begin{proof}
Let $M$ be a negatively curved Riemannian $4$-manifold with virtually compact special fundamental group that is not a real hyperbolic manifold, such as a Gromov--Thurston manifold or a strictly hyperbolized manifold. 
These are known to have virtually special fundamental group by \cite{GI17} and \cite{LR24} respectively.
Let $N''_4=M$ and for $n\geq 5$ let $N''_n=\hypmt{N_{n-1}}$.
By Lemma~\ref{lem: new special} we have that $N''_n$ is a negatively curved Riemannian manifold and that $\pi_1(N''_n)$ is virtually compact special, which proves \eqref{item: new ex special neg} and \eqref{item: new ex special vcs}.

To prove \eqref{item: new ex special loc sym} we argue as follows. 
As in Theorem~\ref{thm: new main thm}, we have that  $\pi_1(N_n'')$ does not have property (T) and $N_n''$ is not K\"ahler. 
So, we only need to check that $N_n''$  cannot be a real hyperbolic manifold.
By contradiction, suppose that 
$\pi_1(N_n'')$ is isomorphic to a real hyperbolic lattice $\Gamma \subseteq \ishyp n$.
Since $N_{n-1}''$ is a totally geodesic submanifold of $N_n''$, its fundamental group identifies with a quasiconvex subgroup $H \subseteq \Gamma$.

By construction, $N_n''$ admits an involution that 
fixes $N_{n-1}''$ pointwise.
This provides an outer automorphism $\phi$ of $\Gamma$ that preserves $H$.
By Mostow rigidity, $\phi$ is realized by an  involution $g \in \ishyp n$.
Since $g$ fixes a totally geodesic $\hh^k \subseteq \hh^n$ for some $k\leq n-1$, 
the fixed set $\operatorname{Fix}_\infty(g)$ for the induced action of $g$ on $\partial_\infty \hh^n$ is a round sphere $S^{k-1}=\partial_\infty \hh^k$.
The Gromov boundary of $H=\pi_1(N_{n-1}'')$ is  $S^{n-2}$ because $N_{n-1}''$ is a negatively curved Riemannian $(n-1)$-manifold.

The limit set $\Lambda(H)=S^{n-2}$ of $H$ in $\partial_\infty \hh^n$ is a priori just a topologically embedded sphere (not necessarily a round one).
However, since $\phi$ preserves $H$, we have $\Lambda(H)=S^{n-2} \subseteq \operatorname{Fix}_\infty(g)=S^{k-1}$. 
Therefore we must have $\Lambda(H)=\operatorname{Fix}_\infty(g)=S^{n-2}$.
This implies that $\pi_1(N_{n-1}'')=H$ acts geometrically on the totally geodesic $\hh^{n-1}$ stabilized by $g$, and therefore  that $N_{n-1}''$ admits a real hyperbolic structure too.
One can repeat this argument all the way down to $M$ and obtain a real hyperbolic structure on $M$, which provides a contradiction.
\end{proof}

\subsection{Cobordism of manifolds}\label{sec: cobordism}
Some of the most classical applications of hyperbolization procedures are to cobordism of manifolds.
For instance,  any closed triangulable manifold $M$  is cobordant to $\hyperbolize{M}$, which is a closed aspherical manifold with negative curvature and  virtually compact special fundamental group; see \cite{CD95,LR24}.
Moreover, if $M$ is smooth, then $\hyperbolize M$ can be chosen to be smooth and have pinched negative sectional curvatures; see \cite{O20}.
The following statements provide more properties about the fundamental group of the cobordism.
It applies in particular when $M_1$ is any closed  triangulable manifold and $M_2=\hyperbolize{M_1}$ 
 is its strict hyperbolization.

\begin{corollary}\label{cor: cobordism}
    Let $M_1,M_2$ be two closed triangulable manifolds with residually finite fundamental group.
    Suppose there is a triangulable cobordism between them.
Then there exists a compact Riemannian manifold with boundary $W$ such that
    \begin{enumerate}
    \item $W$ is a cobordism between $M_1$ and $M_2$
    \item $\pi_1(W)$ is relatively hyperbolic relative to  $\{\pi_1(M_1), \pi_1(M_2)\}$
    \item  $\pi_1(W)$ is residually finite and  $\pi_1(M_i)$ is separable.
\end{enumerate}
\end{corollary}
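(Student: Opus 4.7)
The plan is to obtain $W$ as the relative strict hyperbolization of the given cobordism, with smoothing provided by Ontaneda's version of the construction. Let $K$ denote the given triangulable cobordism, so that $L := \partial K = M_1 \sqcup M_2$, and fix a smooth triangulation of $K$ extending chosen triangulations of $M_1$ and $M_2$ (using \cite{O20}, cf.\ Remark~\ref{rem: new smooth}). Then I would set $W := \relhyp{K}{L}$, applying Ontaneda's smooth strict hyperbolization so that $W$ carries a Riemannian metric of pinched negative curvature on its interior.

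I would then verify the three items in turn. For (1), the pair $(K,L)$ satisfies the standing assumption of Remark~\ref{rem:assumption}, because a compact manifold with boundary is the motivating example: $\relcone{K}{L}$ is homogeneous and without boundary. By the basic features of the relative strict hyperbolization procedure recalled in \S\ref{sec: basic spaces} (in particular, in the smooth-manifold category), $W$ is a compact Riemannian manifold with $\partial W \cong L = M_1 \sqcup M_2$, hence a cobordism between $M_1$ and $M_2$. For (2), Belegradek's theorem from \cite{BE07} applies and gives that $\pi_1(W) = \pi_1(\relhyp{K}{L})$ is hyperbolic relative to $\mathcal{P} = \{\pi_1(M_1), \pi_1(M_2)\}$. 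For (3), since each $\pi_1(M_i)$ is residually finite by hypothesis, item~\eqref{item: res fin main} of Theorem~\hyperref[thm: res fin special main]{B} delivers precisely that $\pi_1(W)$ is residually finite and each peripheral $\pi_1(M_i)$ is separable in $\pi_1(W)$.

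There is no hard step here beyond checking that the hypotheses line up: the corollary is essentially a repackaging of Theorem~\hyperref[thm: res fin special main]{B} in the geometric setting of cobordisms, and all the real work has been absorbed into that theorem (and the appendix of Groves--Manning). The only minor point worth highlighting is the standing homogeneity assumption on $(K,L)$, which is automatic in the manifold-with-boundary setting, and the choice of a smooth triangulation to ensure that $W$ is genuinely Riemannian rather than merely piecewise hyperbolic.
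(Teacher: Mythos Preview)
Your proposal is correct and follows essentially the same route as the paper: take $W=\relhyp{W_0}{M_1\cup M_2}$ for the given triangulable cobordism $W_0$, cite \cite{BE07} for relative hyperbolicity of $\pi_1(W)$ with respect to $\{\pi_1(M_1),\pi_1(M_2)\}$, and invoke Theorem~\hyperref[thm: res fin special main]{B}\eqref{item: res fin main} for residual finiteness and separability of the peripherals. One small caveat: your appeal to Ontaneda's smoothing to obtain a genuine Riemannian metric presupposes a smooth structure on the cobordism, which the hypotheses do not supply (only triangulability); the paper itself does not address this point in the proof and defers the smooth case to the subsequent remark, so ``Riemannian'' here should be read as the piecewise hyperbolic metric coming from Charney--Davis.
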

\begin{proof}
Let $W_0$ be a triangulable cobordism between $M_1$ and $M_2$, 
and let $W=\relhyp{W_0}{M_1 \cup M_2}$.
By construction $\partial W \cong M_1 \cup M_2$ and its fundamental group is relatively hyperbolic with respect to the fundamental groups of the boundary components; see \cite{BE07}.
The statement follows from \eqref{item: res fin main} in Theorem~\hyperref[thm: res fin special main]{B}. 
\end{proof}

\begin{remark}[Variations]
If $M_1$ and $M_2$ are smooth and smoothly cobordant, then $W$ can be taken to be smooth by \cite{O20}.
If $\pi_1(M_i)$ is hyperbolic and virtually compact special, then by \eqref{item: special main} in Theorem~\hyperref[thm: res fin special main]{B} we have that $\pi_1(W)$ is also hyperbolic and virtually compact special.
\end{remark}

 
We also propose the following more geometric application.
Let $N$ be a non-compact complete Riemannian manifold $N$ of finite volume.
A closed Riemannian manifold $M$ \textit{bounds} $N$ \textit{geometrically}
if there is a bounded set $B\subset N$ such that $N\setminus B$ is diffeomorphic to $M\times [0,+\infty)$.
As shown by Long and Reid in \cite{LR00} there are closed flat manifolds that do not bound geometrically a real hyperbolic manifold.
On the other hand, Ontaneda proved that every closed flat manifold bounds geometrically a manifold of pinched negative curvature in \cite{O20}.
The following statement provides information about its fundamental group.

\begin{corollary}\label{cor: flat}
Let $\varepsilon>0$ and let $M$ be a closed  flat manifold.
Then there exists a complete Riemannian manifold of finite volume $N$ such that
\begin{enumerate}
    \item  $N$ has sectional curvatures in $[-1-\varepsilon,-1]$.
    \item $M$ bounds $N$ geometrically.
    \item $\pi_1(N)$ is relatively hyperbolic relative to  $\pi_1(M)$. 
    \item $\pi_1(N)$ is residually finite and  $\pi_1(M)$ is separable.
\end{enumerate}
\end{corollary}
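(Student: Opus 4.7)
The plan is to invoke Ontaneda's construction from \cite{O20}, which already furnishes a manifold $N$ realizing (1) and (2), and then to read off the algebraic properties (3) and (4) from Belegradek's theorem and from Theorem~\hyperref[thm: res fin special main]{B} of this paper. The work splits cleanly into a geometric step (which is not new here) and an algebraic step (which is the direct payoff of Theorem~\hyperref[thm: res fin special main]{B}).

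For the geometric step, I first note that every closed flat manifold bounds topologically (by a classical result of Hamrick--Royster), so there exists a compact smooth manifold $W_0$ with $\partial W_0 = M$. Fix a smooth triangulation $K$ of $W_0$ inducing a triangulation $L$ of $M$. Apply smooth relative strict hyperbolization, with a hyperbolizing cell large enough to ensure the curvature bounds, as in \cite{O20}, to produce a compact Riemannian manifold $W = \relhyp K L$ with sectional curvatures in $[-1-\varepsilon, -1]$ and totally geodesic boundary isometric to $(M, g_M)$. Glue to $\partial W$ a warped cusp $C = M \times [0,\infty)$ carrying a metric of the form $dt^2 + f(t)^2 g_M$ with $f(t) = e^{-t}$ for large $t$, smoothly interpolated near $t=0$ to match the metric on $W$ while preserving the pinching, as done in \cite{O20}. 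The resulting $N$ is complete, has finite volume (the warping factor is integrable), satisfies $N \setminus W \cong M \times [0,\infty)$, and has curvatures in $[-1-\varepsilon, -1]$; this proves (1) and (2).

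For the algebraic step, the cusp $C$ deformation retracts onto $M$ and $M$ is $\pi_1$-injective in both $W$ and $C$, so van Kampen yields $\pi_1(N) \cong \pi_1(W) = \pi_1(\relhyp K L)$. By \cite{BE07} this group is hyperbolic relative to $\mathcal{P}=\{\pi_1(M)\}$, establishing (3). For (4), I use that $\pi_1(M)$ is a Bieberbach group, hence virtually $\zz^{\dim M}$, and in particular residually finite. Theorem~\hyperref[thm: res fin special main]{B}\eqref{item: res fin main} then applies directly and yields that $\pi_1(N)$ is residually finite and that $\pi_1(M)$ is separable in $\pi_1(N)$.

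The main obstacle is entirely on the geometric side and is already resolved by Ontaneda: arranging smoothness throughout, pinching the sectional curvatures to $[-1-\varepsilon, -1]$, and smoothly attaching a finite-volume cusp compatibly with the flat metric on $M$. Everything else is essentially immediate: relative hyperbolicity is \cite{BE07}, and the separability and residual finiteness drop out of Theorem~\hyperref[thm: res fin special main]{B} the moment one observes that Bieberbach groups are residually finite.
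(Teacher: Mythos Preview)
Your proposal is correct and follows essentially the same route as the paper: defer items (1) and (2) to Hamrick--Royster plus Ontaneda's Riemannian hyperbolization \cite{O20}, cite Belegradek \cite{BE07} for (3), and deduce (4) from Bieberbach's theorem (so $\pi_1(M)$ is virtually abelian, hence residually finite) together with Theorem~\hyperref[thm: res fin special main]{B}\eqref{item: res fin main}. You spell out a bit more of Ontaneda's cusp construction and the van Kampen identification $\pi_1(N)\cong\pi_1(\relhyp KL)$ than the paper does, but the argument is the same.
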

\begin{proof}
The first two statement are due to Ontaneda, see \cite[Corollary 7]{O20}. 
They are based on the fact that a closed flat manifold bounds smoothly a smooth compact manifold (see \cite{HR82}), so that hyperbolization can be applied.
The third one is due to Belegradek; see \cite{BE07}.
For the last statement, note that a flat manifold is virtually a torus by Bieberbach's Theorem.
In particular, $\pi_1(M)$ is virtually abelian hence residually finite.
Then we conclude again by \eqref{item: res fin main} in Theorem~\hyperref[thm: res fin special main]{B}.
\end{proof}

As in \cite{O20}, one can  get a similar statement for an almost flat manifold. In this case one needs to additionally assume that the manifold bounds smoothly and has residually finite fundamental group.


\subsection{Aspherical manifolds that cannot be triangulated}\label{sec: triangulability}
Manolescu has shown in \cite{MA16} that for each $n\geq 5$ there is a  closed topological $n$-manifold that cannot be triangulated, i.e., is not homeomorphic to a simplicial complex.
Davis, Fowler, and Lafont  showed in \cite{DFL14} that for $n\geq 6$ one can take such a manifold to be aspherical and have hyperbolic fundamental group.
We now show that the fundamental groups of these manifolds are cubulated.

\begin{theorem}\label{thm: new no tri}
    For each $n\geq 6$ there is a closed aspherical $n$-manifold $N^n$ that cannot be triangulated and whose fundamental group is hyperbolic and virtually compact special.
\end{theorem}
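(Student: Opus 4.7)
The manifold $N^n$ is constructed in \cite{DFL14} by gluing together two pieces along a common boundary $\Sigma$: a piece $N_1$ obtained from strict (Charney-Davis) hyperbolization, and a piece $N_2$ obtained from relative strict hyperbolization. That $N^n$ is a closed aspherical manifold which cannot be triangulated is the content of \cite{DFL14}. By Van Kampen,
\[
\pi_1(N^n) \;\cong\; \pi_1(N_1) *_{\pi_1(\Sigma)} \pi_1(N_2),
\]
so the only remaining task is to verify that this amalgam is hyperbolic and virtually compact special.

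My plan proceeds in three steps. First, I would use \cite{LR24} to conclude that $\pi_1(N_1)$ is hyperbolic and virtually compact special, that $\pi_1(\Sigma)$ embeds as a quasiconvex subgroup of $\pi_1(N_1)$, and that $\pi_1(\Sigma)$ is itself hyperbolic and virtually compact special (which holds because in the \cite{DFL14} construction the boundary piece is again of a hyperbolized form, so \cite{LR24} applies to it). Second, I would invoke Theorem~\hyperref[thm: res fin special main]{B}(2) on $N_2$, with $\pi_1(\Sigma)$ playing the role of the peripheral subgroup: since this peripheral is hyperbolic and virtually compact special by Step 1, the theorem delivers the same conclusion for $\pi_1(N_2)$.

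Third, I would combine the two factors. The amalgamated subgroup $\pi_1(\Sigma)$ is a maximal parabolic in $\pi_1(N_2)$, hence almost malnormal and quasiconvex there, and it is also quasiconvex in $\pi_1(N_1)$ by Step 1. Hyperbolicity of the amalgam follows from the Bestvina--Feighn combination theorem, while virtual specialness follows from a Wise-type amalgamation result for hyperbolic virtually compact special groups over almost malnormal quasiconvex subgroups (analogous in spirit to the use of \cite[Theorem 13.1]{W21} in the proof of Lemma~\ref{lem: new special}). The main technical obstacle I anticipate is verifying the precise almost malnormality and quasiconvexity conditions on $\pi_1(\Sigma)$ inside $\pi_1(N_1)$; this will require combining the cubical complex $\dccx$ from \cite{LR24} with the specifics of the \cite{DFL14} gluing, in order to show that $\pi_1(\Sigma)$ sits inside $\pi_1(N_1)$ as a quasiconvex subgroup with the appropriate intersection properties.
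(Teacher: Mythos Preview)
Your overall strategy---show each factor is hyperbolic and virtually compact special, then combine---matches the paper's. However, there is a genuine gap in Step~1, and Step~3 is more complicated than needed.

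The gap: you invoke \cite{LR24} directly on $N_1$ to conclude that $\pi_1(N_1)$ is virtually compact special. But in the \cite{DFL14} construction, $N_1=\hyperbolize{P'}$ is the strict hyperbolization of a manifold $P'$ \emph{with boundary}, and the main theorem of \cite{LR24} (like the standing assumption in Remark~\ref{rem:assumption} here) requires the underlying complex to be homogeneous and without boundary. The paper fills this gap with a doubling trick: let $P''$ be the double of $P'$ along its boundary; then $\hyperbolize{P''}$ is closed, so \cite{LR24} applies to it, and the subcomplex inclusion $N_1=\hyperbolize{P'}\hookrightarrow\hyperbolize{P''}$ is a local isometry of locally $\cat{-1}$ spaces. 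Hence $\pi_1(N_1)$ is a quasiconvex subgroup of the hyperbolic virtually compact special group $\pi_1(\hyperbolize{P''})$, and is therefore virtually compact special itself. Your claim that $\pi_1(\Sigma)$ is hyperbolic and virtually compact special is fine, since $\Sigma=\hyperbolize{\partial P'}$ is the hyperbolization of a \emph{closed} manifold.

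On Step~3: you do not need almost malnormality of $\pi_1(\Sigma)$ in $\pi_1(N_1)$, and verifying it would indeed be awkward. The paper simply notes that hyperbolicity of $\pi_1(N^n)$ was already established in \cite{DFL14}. Given that, quasiconvexity of $\pi_1(\Sigma)$ in each factor (coming from the local isometric inclusions) yields quasiconvexity of $\pi_1(\Sigma)$ in the amalgam by \cite{KM98,PA93}, and then \cite[Theorem~13.1]{W21} applies directly. No Bestvina--Feighn argument or malnormality check is required.
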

\begin{proof}
    The manifolds are the ones constructed in [\S 3, pp. 800-801]\cite{DFL14}, by applying suitable strict hyperbolization procedures to a construction by Galewski and Stern in \cite{GS79}.
    We consider the case $n=6$. 
    Higher dimensional examples are obtained in a similar way by taking product with tori of the initial building pieces.
    The manifold $N^6 $ is a closed aspherical $6$-manifold obtained by gluing two suitable manifolds with boundary $P_1$ and $P_2$ along their boundary, so $\pi_1(N^6)$ splits as the associated amalgamated free product.

The manifold with boundary $P_1$ is obtained via strict hyperbolization (i.e., $P_1=\hyperbolize{P'}$ for a certain triangulable manifold with boundary $P'$) and $P_2$ is obtained via relative strict hyperbolization (i.e., $P_2=\relhyp{W}{\partial P_1}$ for a certain triangulable manifold $W$ with boundary  $\partial W=\partial P_1$).
A priori, the former has hyperbolic fundamental group and the latter has relatively hyperbolic fundamental group.
However,  since $\partial P_1=\partial \hyperbolize{P'}=\hyperbolize{\partial P'}$ is a  closed manifold, we have that $\pi_1(\partial P_1)$ is a hyperbolic group.
As observed in \cite{DFL14}, it follows that
$\pi_1(N^6)$ is hyperbolic.
Moreover, we also have that $\pi_1(\partial P_1)$ is quasiconvex in $\pi_1(N^6)$, since it is quasiconvex in both factors; see \cite{KM98,PA93}.
Thanks to \cite[Theorem 13.1]{W21}, we are left to show that $\pi_1(P_1)$ and $\pi_1(P_2)$ are virtually compact special.

Since $\partial P_1=\hyperbolize{\partial P'}$ is the strict hyperbolization of a closed manifold,  it follows from \cite{LR24} that $\pi_1(\partial P_1)$ is virtually compact special.
Then $\pi_1(P_2)$ is virtually compact special by \eqref{item: special main} in Theorem~\hyperref[thm: res fin special main]{B}.
Finally, let us consider the manifold $P''$ obtained by doubling $P'$ along its boundary.
The fundamental group of its strict hyperbolization $\hyperbolize{P''}$ is hyperbolic, and also virtually compact special  by \cite{LR24}. 
Since $P'$ is a subcomplex of $P''$ (with respect to any triangulation), the inclusion $P_1=\hyperbolize{P'} \hookrightarrow \hyperbolize{P''}$ is a local isometry. 
It follows that $\pi_1(P_1)$ is a quasiconvex subgroup of $\pi_1(\hyperbolize{P''})$, and therefore it is virtually compact special too.
\end{proof}

\begin{remark}[Virtual triangulability]
For a topological manifold $M$ of dimension at least $5$ the Kirby-Siebenmann class  $\Delta(M) \in H^4(M,\zz_2)$ is an obstruction to the existence of a PL-structure on $M$, in the sense that $M$ admits a PL-structure if and only if $\Delta(M)=0$, see \cite[Theorem 5]{GS80}.
Similarly, in dimension at least $6$ there is a class $\delta(\Delta(M))\in H^5(M,\ker (\mu))$ such that $M$ admits a triangulation if and only if $\delta(\Delta(M))=0$.
Here $\mu: \Theta^3 \to \zz_2$ is the Rokhlin homomorphism for the homology cobordism group $\Theta^3$, and $\delta:H^4(M,\zz_2) \to H^5(M,\ker (\mu))$ is the connecting homomorphism associated to the short exact sequence
$$0 \to \ker (\mu) \to \Theta^3 \overset{\mu}{\to} \zz_2\to 0.$$
It turns out that if $Sq^1(\Delta(M))\neq 0$ then $\delta(\Delta(M))\neq 0$, where $Sq^1:H^4(M,\zz_2) \to H^4(M,\zz_2)$ is the first Steenrod square.

The manifold $N^n$ from Theorem~\ref{thm: new no tri} has residually finite fundamental group, hence it admits a lot of finite covers.
While $N^n$ is not triangulable, one can ask if it is \textit{virtually triangulable}, i.e., if it admits a finite cover that is triangulable.
We note that a triangulable cover must have even degree.
Indeed, if $\pi:\widehat N^n_d \to N^n$ is a  cover of odd degree $d$, then the induced map on cohomology $\pi^*:H^k(N^n,\zz_2)\to H^k(\widehat N^n_d,\zz_2)$ is injective.
It is shown in \cite{DFL14} that  $Sq^1(\Delta(N^n))\neq 0$.
By naturality of the Kirby-Siebenmann class and the Steenrod square, it follows that $Sq^1(\Delta(\widehat N^n_d))\neq 0$, so $\widehat N^n_d$ does not admit a triangulation. 

On the other hand, for a cover $\pi:\widehat N^n_d \to N^n$ of even degree $d$, the map $\pi^*$ can have non-trivial kernel, so it is not clear whether either $\Delta(\widehat N^n_d)$ or $\delta(\Delta(\widehat N^n_d))$ vanishes.
The existence of a triangulable cover would provide an example of an action of a finite group of even order $d$ that cannot be made simplicial even up to changing the triangulation.
Notice that in dimension $5$ this happens already for $d=2$: the Galewski-Stern $5$-manifold from \cite{GS79} is non-triangulable and non-orientable, but all orientable closed 5-manifolds are triangulable; see \cite{SI70}.

This problem about virtual vanishing of certain $\zz_2$-cohomology classes is reminiscent of the following question about vector bundles: given a \textit{flat} vector bundle (i.e., a vector bundle with discrete structure group) over a compact polyhedron $B$, is there a finite cover of $B$ on which the bundle becomes trivial? 
Triviality of a bundle over $B$ with discrete structure group contained in $\slr n$ is obstructed by a class in $H^2(\pi_1(B),\zz_2)$, when $n\geq 3$.
Millson constructed in \cite{MI79} examples of flat real bundles for which this class remains non-trivial in all finite covers, i.e., flat  real bundles that are not virtually trivial.
The analogous problem in the complex case is different: Deligne and Sullivan showed in \cite{DS75} that all  flat complex vector bundles are virtually trivial.
\end{remark}


\appendix
\section{A criterion for residual finiteness}\label{sec:appendix}
\begin{center} \textsc{by Daniel Groves and Jason  Manning}\end{center}
\medskip

In this appendix, we explain why groups satisfying the conclusions of Theorem~\hyperref[thm:action main]{A} are residually finite whenever the peripheral subgroups are.
In fact we prove the somewhat stronger statement that such groups are separable on full quasi-convex subgroups.  Our proof relies on various group theoretic \emph{Dehn filling} results so we begin by recalling the relevant definitions.
\begin{definition}
  Let $(G,\mc{P})$ be relatively hyperbolic.  A \emph{Dehn filling} of $(G,\mc{P})$ is a quotient $\overline{G}$ of $G$ so that $\ker(G\to \overline{G})$ is generated by the union of a collection $\mc{N} = \{N_P\mid P\in \mc{P}\}$ where each $N_P\lhd P$.  We may also write $\overline{G}$ as $G(\mc{N})$ if we want to keep track of the kernel.
  The filling is \emph{peripherally finite} if $[P:N_P]<\infty$ for all $P\in \mc{P}$.  If $\mc{H}$ is a family of subgroups of $G$, and we have $N_P<H^g$ whenever $H^g\cap P$ is infinite, then the filling is called an \emph{$\mc{H}$--filling}.
\end{definition}

Most Dehn filling results require the assumption that the filling is ``sufficiently long''.  To be precise, we have the following.
\begin{definition}
  A statement $\mathsf{F}$ \emph{holds for sufficiently long fillings} of $(G,\mc{P})$ if there is a finite set $S\subset G\setminus\{1\}$ so that the statement $\mathsf{F}$ holds for $G(\mc{N})$ whenever $\mc{N}$ satisfies $\bigcup\mc{N}\cap S = \emptyset$.

  If $\mathsf{G}$ is a property of fillings, we say that $\mathsf{F}$ \emph{holds for sufficiently long $\mathsf{G}$ fillings} if ``$\mathsf{F}$ or not $\mathsf{G}$'' holds for sufficiently long fillings.  For example $\mathsf{G}$ could be the property of being peripherally finite, or of being an $\mc{H}$--filling (or both).
\end{definition}
Notice that a conjunction of \emph{finitely many} statements which hold for sufficiently long fillings also holds for sufficiently long fillings.

Recall that a relatively quasiconvex subgroup $H \subseteq G$  is \textit{full} if for every $g\in G$ and $P\in \mathcal P$ the intersection $H^g \cap P$ is either finite or of finite index in $P$.
We will also need the following definition of Osin in the course of the proof.
\begin{definition}\label{def:strong rqc} \cite[Definition 1.8]{OS06}
A relatively quasi-convex subgroup $H \le G$ is \emph{strongly relatively quasi-convex} if for every $g \in G$ and $P \in \mc{P}$ the intersection $H^g \cap P$ is finite.
\end{definition}

\begin{theorem}\label{thm: appendix main}
Suppose that $(G,\mc{P})$ is a relatively hyperbolic pair with each element of $\mc{P}$ residually finite.  Suppose also that $G$ admits an isometric and cubical action on a $\CAT(0)$ cube complex $X$ so that
\begin{enumerate}
\item\label{itm:cocompact} $\leftQ{X}{G}$ is compact;
\item\label{itm:Pelliptic} Each $P \in \mc{P}$ acts elliptically on $X$; and
\item\label{itm:cubestabs} For each cube $\sigma \in X$, $\Stab_G(\sigma)$ is either conjugate to an element of $\mc{P}$, or else is full relatively quasi-convex, hyperbolic, and virtually compact special. 
\end{enumerate}
Then every full relatively quasi-convex subgroup of $G$ is separable.  In particular, $G$ is residually finite.
\end{theorem}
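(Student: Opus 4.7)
The plan is to reduce, via a sufficiently long peripherally finite Dehn filling, to the setting of a hyperbolic group acting cocompactly on a $\cat 0$ cube complex with virtually compact special quasi-convex cube stabilizers, and then invoke \cite[Theorem D]{GM23} to conclude that the filled group is virtually compact special.  Separability of a full relatively quasi-convex subgroup $H$ of $G$ then reduces to separability of its image, which follows from Haglund--Wise separability of quasi-convex subgroups in virtually compact special hyperbolic groups.  The case $H = \{1\}$ yields residual finiteness of $G$.

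Fix a full relatively quasi-convex $H \le G$ and $g \in G \setminus H$.  Using residual finiteness of each $P \in \mathcal P$, I would choose finite-index normal subgroups $N_P \lhd P$ avoiding a prescribed finite subset of $P \setminus \{1\}$, set $\mathcal N = \{N_P\}$, $K = \ker(G \to G(\mathcal N))$, and $\bar G = G/K$.  Standard Dehn filling results (Osin, Groves--Manning) guarantee that for sufficiently long fillings $\bar G$ is hyperbolic, and $\bar H := HK/K$ is quasi-convex in $\bar G$; fullness of $H$ guarantees that $H \cap K$ is concentrated in the peripheral part of $H$, so $H/(H \cap K) \to \bar H$ is an isomorphism.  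By enlarging the avoided finite set to depend on $g$ and a bounded neighborhood of $H$, one arranges in addition that $g \notin \bar H$ in $\bar G$.

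To make $\bar G$ virtually compact special I would exploit the cube complex $X$.  Shrinking each $N_P$ further if needed, arrange that the filling kernel $K$ acts cellularly without inversions on $X$ in a manner compatible with the cubical structure, so that a suitably defined quotient $Y$ is again a $\cat 0$ cube complex on which $\bar G$ acts cocompactly by cubical isometries.  Each cube stabilizer in this action is an extension of the image of a cube stabilizer of the original $G$--action: by Remark~\ref{rem: strong} and hypothesis (3), the non-parabolic stabilizers are strongly relatively quasi-convex and virtually compact special, and so by standard stability arguments their images remain quasi-convex and virtually compact special in $\bar G$; the parabolic stabilizers have finite image in $\bar G$.  Applying \cite[Theorem D]{GM23} to this action of $\bar G$ on $Y$, we conclude $\bar G$ is virtually compact special.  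Haglund--Wise separability then provides a finite-index subgroup of $\bar G$ containing $\bar H$ but not the image of $g$, and its preimage in $G$ separates $H$ from $g$.

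The principal technical obstacle lies in the construction of the quotient cube complex $Y$ and in verifying that its cube stabilizers in $\bar G$ are quasi-convex and virtually compact special.  Peripheral ellipticity means that each $P \in \mathcal P$ fixes a cube in $X$, so $K$ does not act freely on $X$; one must therefore carry out the quotient by a cubical analogue of Dehn filling that respects these parabolic fixed sets while preserving the flag link condition on vertex links.  The requisite stability of quasi-convexity and virtual specialness under sufficiently long fillings, in the spirit of Agol--Groves--Manning, is the technical heart of the argument and is precisely where the finiteness of the avoided set controlling ``sufficient longness'' must be consolidated into a single choice.
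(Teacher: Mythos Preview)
Your overall strategy coincides with the paper's: perform a long peripherally finite Dehn filling so that the quotient $\bar G$ is hyperbolic, show that the filling kernel $K$ acts on $X$ with $\cat 0$ cubical quotient $\bar X = K\backslash X$, verify that cube stabilizers in $\bar G$ are quasi-convex and virtually compact special, apply \cite[Theorem D]{GM23}, and finish with Haglund--Wise separability.  The paper cites \cite[Corollary 6.6]{GM23} for the fact that $K\backslash X$ is $\cat 0$ and \cite[Propositions 4.5--4.7]{GM21} for the behaviour of relatively quasi-convex subgroups under filling, which fills in the part you flag as the ``principal technical obstacle''.

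There is, however, one genuine gap.  You invoke Remark~\ref{rem: strong} to assert that non-parabolic cube stabilizers are \emph{strongly} relatively quasi-convex.  That remark is a statement about the particular group $\pi_1(\relhyp KL)$ and its action on $\dcc{\buchc}$; it is not part of the hypotheses of Theorem~\ref{thm: appendix main}, where (3) only says stabilizers are \emph{full} relatively quasi-convex.  A full (but not strongly) quasi-convex stabilizer $H$ may meet some $P^g$ in a subgroup of finite index, and then an arbitrary peripherally finite filling need not restrict to a trivial filling on $H$; your ``standard stability arguments'' for quasi-convexity and virtual specialness of $\bar H$ would not go through.  The paper resolves this with an explicit preliminary reduction: it discards from $\mc P$ those peripherals that are hyperbolic and virtually special (which is forced on any $P$ commensurable with a subgroup of such an $H$), observes that full quasi-convex subgroups of $(G,\mc P')$ are still full in $(G,\mc P)$, and checks that with respect to the smaller $\mc P'$ every non-parabolic cube stabilizer becomes strongly quasi-convex.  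Relatedly, the paper insists that the filling be an $\mathcal H$--filling for a family containing both the pointwise cube stabilizers and the target subgroup $L$; you should make this restriction explicit, as it is what guarantees that $\bar L$ is quasi-convex and that the images of cube stabilizers are undisturbed.
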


\begin{proof}  
  We first argue that it is enough to prove the theorem replacing \eqref{itm:cubestabs} with the stronger hypothesis:
  \begin{enumerate}[label = (\arabic*')]
    \setcounter{enumi}{2}  
    \item\label{itm:cubestabsprime} Every cube stabilizer is either maximal parabolic or strongly quasi-convex and virtually compact special.
\end{enumerate}
Let $\mc{P}'\subseteq \mc{P}$ be obtained by omitting those elements of $\mc{P}$ which are both hyperbolic and virtually compact special.
  Every full relatively quasi-convex subgroup of $(G,\mc{P}')$ is full relatively quasi-convex in $(G,\mc{P})$, so if we prove the theorem for $(G,\mc{P}')$ we will have proved it also for $(G,\mc{P})$.
  Replacing $\mc{P}$ by $\mc{P}'$ does not change hypotheses~\eqref{itm:cocompact} or~\eqref{itm:Pelliptic}.  Suppose $H$ is a cube stabilizer which is not maximal parabolic.  With respect to $\mc{P}$ it is therefore full relatively quasi-convex, hyperbolic, and virtually compact special.  By \cite[Theorem 9.1]{HR10}, $H$ is hyperbolic relative to a collection $\mc{D}_H$ of finite index subgroups of conjugates of some parabolics $\mc{P}_H\subset\mc{P}$ (which may occur with multiplicity).  Each of these subgroups is undistorted in $H$ \cite[Lemma 5.4]{OS06}.  In particular each $D\in \mc{D}_H$ is hyperbolic and (using \cite{H08}) virtually special.  It follows that each $P\in \mc{P}_H$ is hyperbolic and virtually special, so $\mc{P}_H\subset \mc{P}\setminus \mc{P}'$.  In particular $H$ is strongly relatively quasi-convex with respect to $\mc{P}'$.  Since $H$ was an arbitrary non-maximal-parabolic cube stabilizer the action of $(G,\mc{P}')$ on $X$ satisfies the stronger condition~\ref{itm:cubestabsprime}.

  We therefore suppose all cube stabilizers are maximal parabolic or strongly quasi-convex and virtually compact special.    Let $\mc{H}$ be a collection of conjugacy representatives of stabilizers of cubes in $X$ which are not maximal parabolic.
  
Let $L$ be a full relatively quasi-convex subgroup of $(G,\mc{P})$, and suppose that $g \in G \smallsetminus L$.
  
Let $\sigma_1, \ldots , \sigma_k$ be representatives of $G$--orbits of cubes in $X$.
For each $i$, let $Q_i$ be the finite-index subgroup of $\Stab(\sigma_i)$ which fixes $\sigma_i$ pointwise, and let $\mc{Q} = \{ Q_1, \ldots Q_k \}$, and $\mc{Q}' = \mc{Q} \cup \{ L \}$.  Note that each element of $\mc{Q}'$ is full relatively quasi-convex.

\begin{claim*} A sufficiently long peripherally-finite $\mc{Q}'$--filling $\pi \co G \to \overline{G} = G/K$ will have the following properties:
\begin{enumerate}
\item $\overline{G}$ is hyperbolic;
\item $\leftQ{X}{K}$ is a $\CAT(0)$ cube complex;
\item For each $H \in \mc{H}$ the map $\pi|_H$ is injective on $H$, and the image $\overline{H}$ is quasi-convex in $\overline{G}$;
\item $\pi(g) \not\in \pi(L)$.
\end{enumerate}
\end{claim*}
\begin{proof}[Proof of Claim]
  The fundamental theorem of relatively hyperbolic Dehn filling \cite{OS07} says that for sufficiently long fillings $\overline{G}$ is hyperbolic relative to the images of the elements of $\mc{P}$.  When these images are finite, this implies that $\overline{G}$ is hyperbolic.

The second item follows from \cite[Corollary 6.6]{GM23}.

The third item follows from \cite[Propositions 4.5 and 4.6]{GM21}
(the induced filling of each $H$ has trivial filling kernels, because $H$ is strongly quasi-convex, so does not intersect maximal parabolics except in finite groups, which can be avoided for sufficiently long fillings).

The fourth item follows from \cite[Proposition 4.7]{GM21}.
\end{proof}
That there exist fillings of the sort in the claim follows from the assumption that elements of $\mc{P}$ are residually finite.  Indeed, for each $P\in \mc{P}$ there are finitely many infinite intersections $P\cap Q^g$ for $Q\in \mc{Q}$, $g\in G$; each such $P\cap Q^g$ is finite index in $P$.  Let $\dot{P}$ be the normal core of the intersection of these $P\cap Q^g$.  For any collection of finite index $\{N_P'\lhd P\}$, the collection $\{N_P = N_P'\cap \dot{P}\}$ determines a peripherally finite $\mc{Q}'$--filling.  Since each $P$ is residually finite, we may choose the $N_P'$ to avoid any given finite set $S$.

Now, $\overline{G}$ acts cocompactly on $\overline{X} = \leftQ{X}{K}$, with stabilizers which are either finite or quasi-convex and virtually special, and hence by \cite[Theorem D]{GM23} $\overline{G}$ is virtually special.  The image $\pi(L)$ of $L$ in $\overline{G}$ is quasi-convex, and $\pi(g) \not\in \pi(L)$, so since quasi-convex subgroups of hyperbolic virtually compact special groups are separable by \cite[Theorem 1.3]{HW08}, $\pi(g)$ can be separated from $\pi(L)$ in a finite quotient of $\overline{G}$.  This is a finite quotient of $G$ separating $g$ from $L$.

Since $L$ and $g \in G \smallsetminus L$ were arbitrary, all full relatively quasi-convex subgroups of $G$ are separable.  Since $\{ 1 \}$ is a full relatively quasi-convex subgroup of $G$, $G$ is residually finite.
\end{proof}

\begin{remark}
By using versions of the Malnormal Special Quotient Theorem on the non-parabolic cell stabilizers, one can weaken Hypothesis \eqref{itm:cubestabs} on these subgroups to, for example, be relatively hyperbolic with respect to the peripheral structure induced from $(G,\mc{P})$, and admitting a (weakly) relatively geometric action on a CAT$(0)$ cube complex.
\end{remark}


\printbibliography

@article {AOS24,
    AUTHOR = {Avramidi, Grigori and Okun, Boris and Schreve, Kevin},
     TITLE = {Homology growth, hyperbolization, and fibering},
   JOURNAL = {Geom. Funct. Anal.},
  FJOURNAL = {Geometric and Functional Analysis},
    VOLUME = {34},
      YEAR = {2024},
    NUMBER = {2},
     PAGES = {303--376},
      ISSN = {1016-443X,1420-8970},
   MRCLASS = {20F65 (20F67 57K50)},
  MRNUMBER = {4715365},
       DOI = {10.1007/s00039-024-00667-w},
       URL = {https://doi.org/10.1007/s00039-024-00667-w},
}

@article {BL12,
    AUTHOR = {Bartels, Arthur and L\"uck, Wolfgang},
     TITLE = {The {B}orel conjecture for hyperbolic and {${\rm
              CAT}(0)$}-groups},
   JOURNAL = {Ann. of Math. (2)},
  FJOURNAL = {Annals of Mathematics. Second Series},
    VOLUME = {175},
      YEAR = {2012},
    NUMBER = {2},
     PAGES = {631--689},
      ISSN = {0003-486X},
   MRCLASS = {57M07 (20F67)},
  MRNUMBER = {2993750},
MRREVIEWER = {Paul D. Mitchener},
       DOI = {10.4007/annals.2012.175.2.5},
       URL = {https://doi-org/10.4007/annals.2012.175.2.5},
}

@article {BT78,
    AUTHOR = {Baumslag, Benjamin and Tretkoff, Marvin},
     TITLE = {Residually finite {HNN} extensions},
   JOURNAL = {Comm. Algebra},
  FJOURNAL = {Communications in Algebra},
    VOLUME = {6},
      YEAR = {1978},
    NUMBER = {2},
     PAGES = {179--194},
      ISSN = {0092-7872},
   MRCLASS = {20E06},
  MRNUMBER = {484178},
       DOI = {10.1080/00927877808822240},
       URL = {https://doi.org/10.1080/00927877808822240},
}

@book {BHV08,
    AUTHOR = {Bekka, Bachir and {de la Harpe}, Pierre and Valette, Alain},
     TITLE = {Kazhdan's property ({T})},
    SERIES = {New Mathematical Monographs},
    VOLUME = {11},
 PUBLISHER = {Cambridge University Press, Cambridge},
      YEAR = {2008},
     PAGES = {xiv+472},
      ISBN = {978-0-521-88720-5},
   MRCLASS = {22-02 (22E40 28D15 37A15 43A07 43A35)},
  MRNUMBER = {2415834},
MRREVIEWER = {Markus\ Neuhauser},
       DOI = {10.1017/CBO9780511542749},
       URL = {https://doi.org/10.1017/CBO9780511542749},
}

@article {BE06,
    AUTHOR = {Belegradek, Igor},
     TITLE = {Aspherical manifolds, relative hyperbolicity, simplicial
              volume and assembly maps},
   JOURNAL = {Algebr. Geom. Topol.},
  FJOURNAL = {Algebraic \& Geometric Topology},
    VOLUME = {6},
      YEAR = {2006},
     PAGES = {1341--1354},
      ISSN = {1472-2747},
   MRCLASS = {57M07 (20F65)},
  MRNUMBER = {2253450},
MRREVIEWER = {Fran\c{c}ois Dahmani},
       DOI = {10.2140/agt.2006.6.1341},
       URL = {https://doi.org/10.2140/agt.2006.6.1341},
}

@article {BE07,
    AUTHOR = {Belegradek, Igor},
     TITLE = {Aspherical manifolds with relatively hyperbolic fundamental
              groups},
   JOURNAL = {Geom. Dedicata},
  FJOURNAL = {Geometriae Dedicata},
    VOLUME = {129},
      YEAR = {2007},
     PAGES = {119--144},
      ISSN = {0046-5755},
   MRCLASS = {20F65 (20F67 52B20 57M07)},
  MRNUMBER = {2353987},
MRREVIEWER = {Armando Martino},
       DOI = {10.1007/s10711-007-9199-8},
       URL = {https://doi-org.ezproxy.library.tufts.edu/10.1007/s10711-007-9199-8},
}

@book {BH99,
    AUTHOR = {Bridson, Martin R. and Haefliger, Andr\'e},
     TITLE = {Metric spaces of non-positive curvature},
    SERIES = {Grundlehren der mathematischen Wissenschaften [Fundamental
              Principles of Mathematical Sciences]},
    VOLUME = {319},
 PUBLISHER = {Springer-Verlag, Berlin},
      YEAR = {1999},
     PAGES = {xxii+643},
      ISBN = {3-540-64324-9},
   MRCLASS = {53C23 (20F65 53C70 57M07)},
  MRNUMBER = {1744486},
MRREVIEWER = {Athanase\ Papadopoulos},
       DOI = {10.1007/978-3-662-12494-9},
       URL = {https://doi.org/10.1007/978-3-662-12494-9},
}

@article {BZH06,
    AUTHOR = {Hu, B. Z.},
     TITLE = {Relative hyperbolization and {P}ontrjagin classes},
   JOURNAL = {C. R. Math. Acad. Sci. Paris},
  FJOURNAL = {Comptes Rendus Math\'ematique. Acad\'emie des Sciences. Paris},
    VOLUME = {342},
      YEAR = {2006},
    NUMBER = {4},
     PAGES = {253--257},
      ISSN = {1631-073X,1778-3569},
   MRCLASS = {57R20 (53C23 55R60)},
  MRNUMBER = {2196008},
MRREVIEWER = {Athanase\ Papadopoulos},
       DOI = {10.1016/j.crma.2005.12.020},
       URL = {https://doi.org/10.1016/j.crma.2005.12.020},
}

@article {CC07,
    AUTHOR = {Charney, Ruth and Crisp, John},
     TITLE = {Relative hyperbolicity and {A}rtin groups},
   JOURNAL = {Geom. Dedicata},
  FJOURNAL = {Geometriae Dedicata},
    VOLUME = {129},
      YEAR = {2007},
     PAGES = {1--13},
      ISSN = {0046-5755},
   MRCLASS = {20F36 (20F67)},
  MRNUMBER = {2353977},
MRREVIEWER = {Ilya Kapovich},
       DOI = {10.1007/s10711-007-9178-0},
       URL = {https://doi.org/10.1007/s10711-007-9178-0},
}

@article {CD95,
    AUTHOR = {Charney, Ruth  and Davis, Michael W.},
     TITLE = {Strict hyperbolization},
   JOURNAL = {Topology},
  FJOURNAL = {Topology. An International Journal of Mathematics},
    VOLUME = {34},
      YEAR = {1995},
    NUMBER = {2},
     PAGES = {329--350},
      ISSN = {0040-9383},
   MRCLASS = {57Q05 (53C23)},
  MRNUMBER = {1318879},
MRREVIEWER = {Colin C. Adams},
       DOI = {10.1016/0040-9383(94)00027-I},
       URL = {https://doi.org/10.1016/0040-9383(94)00027-I},
}

@article {DFL14,
    AUTHOR = {Davis, Michael W. and Fowler, Jim and Lafont, Jean-Fran\c{c}ois},
     TITLE = {Aspherical manifolds that cannot be triangulated},
   JOURNAL = {Algebr. Geom. Topol.},
  FJOURNAL = {Algebraic \& Geometric Topology},
    VOLUME = {14},
      YEAR = {2014},
    NUMBER = {2},
     PAGES = {795--803},
      ISSN = {1472-2747},
   MRCLASS = {57Qxx (20F65)},
  MRNUMBER = {3159970},
MRREVIEWER = {Basudeb Datta},
       DOI = {10.2140/agt.2014.14.795},
       URL = {https://doi-org.proxy.lib.fsu.edu/10.2140/agt.2014.14.795},
}

@article {DJ91,
    AUTHOR = {Davis, Michael W. and Januszkiewicz, Tadeusz},
     TITLE = {Hyperbolization of polyhedra},
   JOURNAL = {J. Differential Geom.},
  FJOURNAL = {Journal of Differential Geometry},
    VOLUME = {34},
      YEAR = {1991},
    NUMBER = {2},
     PAGES = {347--388},
      ISSN = {0022-040X},
   MRCLASS = {57Q05 (53C20)},
  MRNUMBER = {1131435},
MRREVIEWER = {Werner Ballmann},
       URL = {http://projecteuclid.org.proxy.lib.fsu.edu/euclid.jdg/1214447212},
}

@article {DJW01,
    AUTHOR = {Davis, Michael W. and Januszkiewicz, Tadeusz and Weinberger,
              Shmuel},
     TITLE = {Relative hyperbolization and aspherical bordisms: an addendum
              to ``{H}yperbolization of polyhedra'' [{J}. {D}ifferential
              {G}eom. {\bf 34} (1991), no. 2, 347--388; {MR} 1131435 
              (92h:57036)] by {D}avis and {J}anuszkiewicz},
   JOURNAL = {J. Differential Geom.},
  FJOURNAL = {Journal of Differential Geometry},
    VOLUME = {58},
      YEAR = {2001},
    NUMBER = {3},
     PAGES = {535--541},
      ISSN = {0022-040X},
   MRCLASS = {57Q05 (53C20)},
  MRNUMBER = {1906785},
       URL = {http://projecteuclid.org/euclid.jdg/1090348358},
}

@article {DV89,
    AUTHOR = {{de la Harpe}, Pierre and Valette, Alain},
     TITLE = {La propri\'{e}t\'{e} {$(T)$} de {K}azhdan pour les groupes localement
              compacts (avec un appendice de {M}arc {B}urger)},
      NOTE = {With an appendix by M. Burger},
   JOURNAL = {Ast\'{e}risque},
  FJOURNAL = {Ast\'{e}risque},
    NUMBER = {175},
      YEAR = {1989},
     PAGES = {158},
      ISSN = {0303-1179},
   MRCLASS = {22-02 (05C99 22D10 22E99 28C10)},
  MRNUMBER = {1023471},
MRREVIEWER = {Michael Cowling},
}

@article {DS75,
    AUTHOR = {Deligne, Pierre and Sullivan, Dennis},
     TITLE = {Fibr\'{e}s vectoriels complexes \`a groupe structural discret},
   JOURNAL = {C. R. Acad. Sci. Paris S\'{e}r. A-B},
  FJOURNAL = {Comptes Rendus Hebdomadaires des S\'{e}ances de l'Acad\'{e}mie des
              Sciences. S\'{e}ries A et B},
    VOLUME = {281},
      YEAR = {1975},
    NUMBER = {24},
     PAGES = {Ai, A1081--A1083},
      ISSN = {0151-0509},
   MRCLASS = {55F10 (32L05)},
  MRNUMBER = {397729},
MRREVIEWER = {P. E. Newstead},
}

@article {DE05,
    AUTHOR = {Deraux, Martin},
     TITLE = {A negatively curved {K}\"{a}hler threefold not covered by the
              ball},
   JOURNAL = {Invent. Math.},
  FJOURNAL = {Inventiones Mathematicae},
    VOLUME = {160},
      YEAR = {2005},
    NUMBER = {3},
     PAGES = {501--525},
      ISSN = {0020-9910},
   MRCLASS = {32Q05 (32J17 32J27 32Q15)},
  MRNUMBER = {2178701},
MRREVIEWER = {G. K. Sankaran},
       DOI = {10.1007/s00222-004-0414-z},
       URL = {https://doi.org/10.1007/s00222-004-0414-z},
}

@article {EG20,
    AUTHOR = {Einstein, Eduard and Groves, Daniel},
     TITLE = {Relative cubulations and groups with a 2-sphere boundary},
   JOURNAL = {Compos. Math.},
  FJOURNAL = {Compositio Mathematica},
    VOLUME = {156},
      YEAR = {2020},
    NUMBER = {4},
     PAGES = {862--867},
      ISSN = {0010-437X},
   MRCLASS = {20F65},
  MRNUMBER = {4079630},
MRREVIEWER = {Bruno P. Zimmermann},
       DOI = {10.1112/s0010437x20007095},
       URL = {https://doi.org/10.1112/s0010437x20007095},
}

@article {EG22,
    AUTHOR = {Einstein, Eduard and Groves, Daniel},
     TITLE = {Relatively geometric actions on {$\rm CAT(0)$} cube complexes},
   JOURNAL = {J. Lond. Math. Soc. (2)},
  FJOURNAL = {Journal of the London Mathematical Society. Second Series},
    VOLUME = {105},
      YEAR = {2022},
    NUMBER = {1},
     PAGES = {691--708},
      ISSN = {0024-6107},
   MRCLASS = {20F67 (20F65 57M07)},
  MRNUMBER = {4411337},
       DOI = {10.1112/jlms.12556},
       URL = {https://doi.org/10.1112/jlms.12556},
}

@inproceedings {GS79,
    AUTHOR = {Galewski, David E. and Stern, Ronald J.},
     TITLE = {A universal {$5$}-manifold with respect to simplicial
              triangulations},
 BOOKTITLE = {Geometric topology ({P}roc. {G}eorgia {T}opology {C}onf.,
              {A}thens, {G}a., 1977)},
     PAGES = {345--350},
 PUBLISHER = {Academic Press, New York-London},
      YEAR = {1979},
   MRCLASS = {57Q15},
  MRNUMBER = {537740},
MRREVIEWER = {N. Martin},
}

@article {GS80,
    AUTHOR = {Galewski, David E. and Stern, Ronald J.},
     TITLE = {Classification of simplicial triangulations of topological
              manifolds},
   JOURNAL = {Ann. of Math. (2)},
  FJOURNAL = {Annals of Mathematics. Second Series},
    VOLUME = {111},
      YEAR = {1980},
    NUMBER = {1},
     PAGES = {1--34},
      ISSN = {0003-486X},
   MRCLASS = {57Q15 (57Q25)},
  MRNUMBER = {558395},
MRREVIEWER = {R. C. Kirby},
       DOI = {10.2307/1971215},
       URL = {https://doi.org/10.2307/1971215},
}

@article {GI17,
    AUTHOR = {Giralt, Anne},
     TITLE = {Cubulation of {G}romov-{T}hurston manifolds},
   JOURNAL = {Groups Geom. Dyn.},
  FJOURNAL = {Groups, Geometry, and Dynamics},
    VOLUME = {11},
      YEAR = {2017},
    NUMBER = {2},
     PAGES = {393--414},
      ISSN = {1661-7207},
   MRCLASS = {57M50 (11F75 57M12)},
  MRNUMBER = {3668045},
       DOI = {10.4171/GGD/401},
       URL = {https://doi.org/10.4171/GGD/401},
}

@article {GM23,
    AUTHOR = {Groves, Daniel and Manning, Jason Fox},
     TITLE = {Hyperbolic groups acting improperly},
   JOURNAL = {Geom. Topol.},
  FJOURNAL = {Geometry \& Topology},
    VOLUME = {27},
      YEAR = {2023},
    NUMBER = {9},
     PAGES = {3387--3460},
      ISSN = {1465-3060,1364-0380},
   MRCLASS = {20F65 (57M05)},
  MRNUMBER = {4674833},
       DOI = {10.2140/gt.2023.27.3387},
       URL = {https://doi.org/10.2140/gt.2023.27.3387},
}

@article {GM21,
    AUTHOR = {Groves, Daniel and Manning, Jason Fox},
     TITLE = {Quasiconvexity and {D}ehn filling},
   JOURNAL = {Amer. J. Math.},
  FJOURNAL = {American Journal of Mathematics},
    VOLUME = {143},
      YEAR = {2021},
    NUMBER = {1},
     PAGES = {95--124},
      ISSN = {0002-9327},
   MRCLASS = {57K32 (20F65)},
  MRNUMBER = {4201780},
MRREVIEWER = {Michael L. Mihalik},
       DOI = {10.1353/ajm.2021.0007},
       URL = {https://doi.org/10.1353/ajm.2021.0007},
}

@article {GM22,
    AUTHOR = {Groves, Daniel and Manning, Jason F.},
     TITLE = {Specializing cubulated relatively hyperbolic groups},
   JOURNAL = {J. Topol.},
  FJOURNAL = {Journal of Topology},
    VOLUME = {15},
      YEAR = {2022},
    NUMBER = {2},
     PAGES = {398--442},
      ISSN = {1753-8416},
   MRCLASS = {20F67 (57M60)},
  MRNUMBER = {4413505},
       DOI = {10.1112/topo.12226},
       URL = {https://doi.org/10.1112/topo.12226},
}

@incollection {G87,
    AUTHOR = {Gromov, M.},
     TITLE = {Hyperbolic groups},
 BOOKTITLE = {Essays in group theory},
    SERIES = {Math. Sci. Res. Inst. Publ.},
    VOLUME = {8},
     PAGES = {75--263},
 PUBLISHER = {Springer, New York},
      YEAR = {1987},
   MRCLASS = {20F32 (20F06 20F10 22E40 53C20 57R75 58F17)},
  MRNUMBER = {919829},
MRREVIEWER = {Christopher W. Stark},
       DOI = {10.1007/978-1-4613-9586-7_3},
       URL = {https://doi-org.proxy.lib.fsu.edu/10.1007/978-1-4613-9586-7_3},
}

@article {GT87,
    AUTHOR = {Gromov, M. and Thurston, W.},
     TITLE = {Pinching constants for hyperbolic manifolds},
   JOURNAL = {Invent. Math.},
  FJOURNAL = {Inventiones Mathematicae},
    VOLUME = {89},
      YEAR = {1987},
    NUMBER = {1},
     PAGES = {1--12},
      ISSN = {0020-9910},
   MRCLASS = {53C20},
  MRNUMBER = {892185},
MRREVIEWER = {Karsten Grove},
       DOI = {10.1007/BF01404671},
       URL = {https://doi.org/10.1007/BF01404671},
}

@article {H08,
    AUTHOR = {Haglund, Fr\'{e}d\'{e}ric},
     TITLE = {Finite index subgroups of graph products},
   JOURNAL = {Geom. Dedicata},
  FJOURNAL = {Geometriae Dedicata},
    VOLUME = {135},
      YEAR = {2008},
     PAGES = {167--209},
      ISSN = {0046-5755},
   MRCLASS = {20F65 (20E26 20F36 20F55 20F67 51E24)},
  MRNUMBER = {2413337},
MRREVIEWER = {Henry Wilton},
       DOI = {10.1007/s10711-008-9270-0},
       URL = {https://doi.org/10.1007/s10711-008-9270-0},
}

@article {HW08,
    AUTHOR = {Haglund, Fr\'ed\'eric and Wise, Daniel T.},
     TITLE = {Special cube complexes},
   JOURNAL = {Geom. Funct. Anal.},
  FJOURNAL = {Geometric and Functional Analysis},
    VOLUME = {17},
      YEAR = {2008},
    NUMBER = {5},
     PAGES = {1551--1620},
      ISSN = {1016-443X},
   MRCLASS = {20F36 (20F55 20F67)},
  MRNUMBER = {2377497},
MRREVIEWER = {Patrick Bahls},
       DOI = {10.1007/s00039-007-0629-4},
       URL = {https://doi.org/10.1007/s00039-007-0629-4},
}

@article {HW12,
    AUTHOR = {Haglund, Fr\'ed\'eric and Wise, Daniel T.},
     TITLE = {A combination theorem for special cube complexes},
   JOURNAL = {Ann. of Math. (2)},
  FJOURNAL = {Annals of Mathematics. Second Series},
    VOLUME = {176},
      YEAR = {2012},
    NUMBER = {3},
     PAGES = {1427--1482},
      ISSN = {0003-486X},
   MRCLASS = {20F67 (20E06 20E26)},
  MRNUMBER = {2979855},
MRREVIEWER = {Yago Antol\'\i n},
       DOI = {10.4007/annals.2012.176.3.2},
       URL = {https://doi.org/10.4007/annals.2012.176.3.2},
}

@article {HR82,
    AUTHOR = {Hamrick, Gary C. and Royster, David C.},
     TITLE = {Flat {R}iemannian manifolds are boundaries},
   JOURNAL = {Invent. Math.},
  FJOURNAL = {Inventiones Mathematicae},
    VOLUME = {66},
      YEAR = {1982},
    NUMBER = {3},
     PAGES = {405--413},
      ISSN = {0020-9910},
   MRCLASS = {53C20 (57R75)},
  MRNUMBER = {662600},
MRREVIEWER = {R. E. Stong},
       DOI = {10.1007/BF01389221},
       URL = {https://doi.org/10.1007/BF01389221},
}

@article {HR10,
    AUTHOR = {Hruska, G. Christopher},
     TITLE = {Relative hyperbolicity and relative quasiconvexity for
              countable groups},
   JOURNAL = {Algebr. Geom. Topol.},
  FJOURNAL = {Algebraic \& Geometric Topology},
    VOLUME = {10},
      YEAR = {2010},
    NUMBER = {3},
     PAGES = {1807--1856},
      ISSN = {1472-2747},
   MRCLASS = {20F65 (20F67)},
  MRNUMBER = {2684983},
MRREVIEWER = {Eduardo Mart\'{\i}nez-Pedroza},
       DOI = {10.2140/agt.2010.10.1807},
       URL = {https://doi.org/10.2140/agt.2010.10.1807},
}

@article {KM98,
    AUTHOR = {Kharlampovich, O. and Myasnikov, A.},
     TITLE = {Hyperbolic groups and free constructions},
   JOURNAL = {Trans. Amer. Math. Soc.},
  FJOURNAL = {Transactions of the American Mathematical Society},
    VOLUME = {350},
      YEAR = {1998},
    NUMBER = {2},
     PAGES = {571--613},
      ISSN = {0002-9947},
   MRCLASS = {20F32 (20E06)},
  MRNUMBER = {1390041},
MRREVIEWER = {Michael L. Mihalik},
       DOI = {10.1090/S0002-9947-98-01773-5},
       URL = {https://doi.org/10.1090/S0002-9947-98-01773-5},
}

@article {LR24,
    AUTHOR = {Lafont, Jean-Fran\c{c}ois and Ruffoni, Lorenzo},
     TITLE = {Special cubulation of strict hyperbolization},
   JOURNAL = {Invent. Math.},
  FJOURNAL = {Inventiones Mathematicae},
    VOLUME = {236},
      YEAR = {2024},
    NUMBER = {3},
     PAGES = {925--997},
      ISSN = {0020-9910,1432-1297},
   MRCLASS = {20F67 (20E26 53C23 57Q05)},
  MRNUMBER = {4743513},
       DOI = {10.1007/s00222-024-01241-9},
       URL = {https://doi.org/10.1007/s00222-024-01241-9},
}

@article {LW24,
    AUTHOR = {Lim, Geunho and Weinberger, Shmuel},
     TITLE = {Bounds on {C}heeger-{G}romov invariants and simplicial
              complexity of triangulated manifolds},
   JOURNAL = {J. Reine Angew. Math.},
  FJOURNAL = {Journal f\"ur die Reine und Angewandte Mathematik. [Crelle's
              Journal]},
    VOLUME = {808},
      YEAR = {2024},
     PAGES = {271--297},
      ISSN = {0075-4102,1435-5345},
   MRCLASS = {57Q60 (57Q12 57Q15)},
  MRNUMBER = {4708122},
       DOI = {10.1515/crelle-2024-0003},
       URL = {https://doi.org/10.1515/crelle-2024-0003},
}

@article {LR00,
    AUTHOR = {Long, D. D. and Reid, A. W.},
     TITLE = {On the geometric boundaries of hyperbolic {$4$}-manifolds},
   JOURNAL = {Geom. Topol.},
  FJOURNAL = {Geometry and Topology},
    VOLUME = {4},
      YEAR = {2000},
     PAGES = {171--178},
      ISSN = {1465-3060},
   MRCLASS = {57R90 (57M50 58J28)},
  MRNUMBER = {1769269},
MRREVIEWER = {Marko Kranjc},
       DOI = {10.2140/gt.2000.4.171},
       URL = {https://doi.org/10.2140/gt.2000.4.171},
}

@article {MA16,
    AUTHOR = {Manolescu, Ciprian},
     TITLE = {Pin(2)-equivariant {S}eiberg-{W}itten {F}loer homology and the
              triangulation conjecture},
   JOURNAL = {J. Amer. Math. Soc.},
  FJOURNAL = {Journal of the American Mathematical Society},
    VOLUME = {29},
      YEAR = {2016},
    NUMBER = {1},
     PAGES = {147--176},
      ISSN = {0894-0347},
   MRCLASS = {57R58 (57M27 57Q15)},
  MRNUMBER = {3402697},
MRREVIEWER = {Tirasan Khandhawit},
       DOI = {10.1090/jams829},
       URL = {https://doi.org/10.1090/jams829},
}

@article {MI79,
    AUTHOR = {Millson, John J.},
     TITLE = {Real vector bundles with discrete structure group},
   JOURNAL = {Topology},
  FJOURNAL = {Topology. An International Journal of Mathematics},
    VOLUME = {18},
      YEAR = {1979},
    NUMBER = {1},
     PAGES = {83--89},
      ISSN = {0040-9383},
   MRCLASS = {55R15 (53C05 57R22)},
  MRNUMBER = {528238},
       DOI = {10.1016/0040-9383(79)90016-8},
       URL = {https://doi.org/10.1016/0040-9383(79)90016-8},
}

@article {MS80,
    AUTHOR = {Mostow, G. D. and Siu, Yum Tong},
     TITLE = {A compact {K}\"{a}hler surface of negative curvature not covered
              by the ball},
   JOURNAL = {Ann. of Math. (2)},
  FJOURNAL = {Annals of Mathematics. Second Series},
    VOLUME = {112},
      YEAR = {1980},
    NUMBER = {2},
     PAGES = {321--360},
      ISSN = {0003-486X},
   MRCLASS = {53C55 (32C10 32M05)},
  MRNUMBER = {592294},
MRREVIEWER = {Robert E. Greene},
       DOI = {10.2307/1971149},
       URL = {https://doi.org/10.2307/1971149},
}

@article {NP13,
    AUTHOR = {Nguy$\tilde{\hat{\textrm{e}}}$n Phan, T. T\^{a}m},
     TITLE = {Nonpositively curved manifolds containing a prescribed
              nonpositively curved hypersurface},
   JOURNAL = {Topology Proc.},
  FJOURNAL = {Topology Proceedings},
    VOLUME = {42},
      YEAR = {2013},
     PAGES = {39--41},
      ISSN = {0146-4124},
   MRCLASS = {53C21 (53C40)},
  MRNUMBER = {2945896},
MRREVIEWER = {David J. Wraith},
}

@article {NR97,
    AUTHOR = {Niblo, Graham and Reeves, Lawrence},
     TITLE = {Groups acting on CAT(0) cube complexes},
   JOURNAL = {Geom. Topol.},
  FJOURNAL = {Geometry and Topology},
    VOLUME = {1},
      YEAR = {1997},
     PAGES = {approx. 7 pp.},
      ISSN = {1465-3060},
   MRCLASS = {57M07 (20F32)},
  MRNUMBER = {1432323},
MRREVIEWER = {Philip L. Bowers},
       DOI = {10.2140/gt.1997.1.1},
       URL = {https://doi-org.proxy.lib.fsu.edu/10.2140/gt.1997.1.1},
}

@article {O20,
    AUTHOR = {Ontaneda, Pedro},
     TITLE = {Riemannian hyperbolization},
   JOURNAL = {Publ. Math. Inst. Hautes \'{E}tudes Sci.},
  FJOURNAL = {Publications Math\'{e}matiques. Institut de Hautes \'{E}tudes
              Scientifiques},
    VOLUME = {131},
      YEAR = {2020},
     PAGES = {1--72},
      ISSN = {0073-8301},
   MRCLASS = {53C21 (57R20)},
  MRNUMBER = {4106793},
MRREVIEWER = {Thilo Kuessner},
       DOI = {10.1007/s10240-020-00113-1},
       URL = {https://doi-org.proxy.lib.fsu.edu/10.1007/s10240-020-00113-1},
}

@article {RE23,
    AUTHOR = {Reyes, Eduardo},
     TITLE = {On cubulated relatively hyperbolic groups},
   JOURNAL = {Geom. Topol.},
  FJOURNAL = {Geometry \& Topology},
    VOLUME = {27},
      YEAR = {2023},
    NUMBER = {2},
     PAGES = {575--640},
      ISSN = {1465-3060,1364-0380},
   MRCLASS = {20F65 (20F67 57M07)},
  MRNUMBER = {4589561},
MRREVIEWER = {Michael\ Hull},
       DOI = {10.2140/gt.2023.27.575},
       URL = {https://doi.org/10.2140/gt.2023.27.575},
}

@article {OS07,
    AUTHOR = {Osin, Denis V.},
     TITLE = {Peripheral fillings of relatively hyperbolic groups},
   JOURNAL = {Invent. Math.},
  FJOURNAL = {Inventiones Mathematicae},
    VOLUME = {167},
      YEAR = {2007},
    NUMBER = {2},
     PAGES = {295--326},
      ISSN = {0020-9910},
   MRCLASS = {20F67 (20E26 20F06 20F65 57M27)},
  MRNUMBER = {2270456},
MRREVIEWER = {Ilya Kapovich},
       DOI = {10.1007/s00222-006-0012-3},
       URL = {https://doi.org/10.1007/s00222-006-0012-3},
}

@article {OS06,
    AUTHOR = {Osin, Denis V.},
     TITLE = {Relatively hyperbolic groups: intrinsic geometry, algebraic
              properties, and algorithmic problems},
   JOURNAL = {Mem. Amer. Math. Soc.},
  FJOURNAL = {Memoirs of the American Mathematical Society},
    VOLUME = {179},
      YEAR = {2006},
    NUMBER = {843},
     PAGES = {vi+100},
      ISSN = {0065-9266},
   MRCLASS = {20F67},
  MRNUMBER = {2182268},
MRREVIEWER = {Ilya Kapovich},
       DOI = {10.1090/memo/0843},
       URL = {https://doi.org/10.1090/memo/0843},
}

@book {PA93,
    AUTHOR = {Papasoglu, Panagiotis},
     TITLE = {Geometric methods in group theory},
      NOTE = {Thesis (Ph.D.)--Columbia University},
 PUBLISHER = {ProQuest LLC, Ann Arbor, MI},
      YEAR = {1993},
     PAGES = {67},
   MRCLASS = {Thesis},
  MRNUMBER = {2690450},
       URL =
              {http://gateway.proquest.com/openurl?url_ver=Z39.88-2004&rft_val_fmt=info:ofi/fmt:kev:mtx:dissertation&res_dat=xri:pqdiss&rft_dat=xri:pqdiss:9412827},
}

@article {RU23,
    AUTHOR = {Ruffoni, Lorenzo},
     TITLE = {Manifolds without real projective or flat conformal
              structures},
   JOURNAL = {Proc. Amer. Math. Soc.},
  FJOURNAL = {Proceedings of the American Mathematical Society},
    VOLUME = {151},
      YEAR = {2023},
    NUMBER = {8},
     PAGES = {3611--3620},
      ISSN = {0002-9939,1088-6826},
   MRCLASS = {57N16 (20F67 53C23)},
  MRNUMBER = {4591792},
       DOI = {10.1090/proc/16293},
       URL = {https://doi.org/10.1090/proc/16293},
}

@inproceedings {SI70,
    AUTHOR = {Siebenmann, L. C.},
     TITLE = {Are nontriangulable manifolds triangulable?},
 BOOKTITLE = {Topology of {M}anifolds ({P}roc. {I}nst., {U}niv. of
              {G}eorgia, {A}thens, {G}a., 1969)},
     PAGES = {77--84},
 PUBLISHER = {Markham, Chicago, Ill.},
      YEAR = {1970},
   MRCLASS = {57.01},
  MRNUMBER = {0271956},
MRREVIEWER = {A. H. Wallace},
}

@article {ST22,
    AUTHOR = {Stover, Matthew and Toledo, Domingo},
     TITLE = {Residually finite lattices in {$\widetilde{PU(2,1)}$} and
              fundamental groups of smooth projective surfaces},
   JOURNAL = {Michigan Math. J.},
  FJOURNAL = {Michigan Mathematical Journal},
    VOLUME = {72},
      YEAR = {2022},
     PAGES = {559--597},
      ISSN = {0026-2285,1945-2365},
   MRCLASS = {22E40 (14F35)},
  MRNUMBER = {4460264},
MRREVIEWER = {Herbert\ Abels},
       DOI = {10.1307/mmj/20217215},
       URL = {https://doi.org/10.1307/mmj/20217215},
}

@article {ST22n,
    AUTHOR = {Stover, Matthew and Toledo, Domingo},
     TITLE = {Residual finiteness for central extensions of lattices in
              {${\rm PU}(n, 1)$} and negatively curved projective varieties},
   JOURNAL = {Pure Appl. Math. Q.},
  FJOURNAL = {Pure and Applied Mathematics Quarterly},
    VOLUME = {18},
      YEAR = {2022},
    NUMBER = {4},
     PAGES = {1771--1797},
      ISSN = {1558-8599,1558-8602},
   MRCLASS = {22E40 (14E20 32Q05)},
  MRNUMBER = {4504038},
MRREVIEWER = {Thilo\ Kuessner},
}

@article {SW20,
    AUTHOR = {\'Swi\k{a}tkowski, Jacek},
     TITLE = {Trees of manifolds as boundaries of spaces and groups},
   JOURNAL = {Geom. Topol.},
  FJOURNAL = {Geometry \& Topology},
    VOLUME = {24},
      YEAR = {2020},
    NUMBER = {2},
     PAGES = {593--622},
      ISSN = {1465-3060},
   MRCLASS = {20F67 (20F65 57M07)},
  MRNUMBER = {4153650},
MRREVIEWER = {Igor Belegradek},
       DOI = {10.2140/gt.2020.24.593},
       URL = {https://doi.org/10.2140/gt.2020.24.593},
}

@article {WA82,
    AUTHOR = {Watatani, Yasuo},
     TITLE = {Property {T} of {K}azhdan implies property {FA} of {S}erre},
   JOURNAL = {Math. Japon.},
  FJOURNAL = {Mathematica Japonica},
    VOLUME = {27},
      YEAR = {1982},
    NUMBER = {1},
     PAGES = {97--103},
      ISSN = {0025-5513},
   MRCLASS = {22B05 (43A35)},
  MRNUMBER = {649023},
MRREVIEWER = {Harald Rindler},
}

@book {W21,
    AUTHOR = {Wise, Daniel T.},
     TITLE = {The structure of groups with a quasiconvex hierarchy},
    SERIES = {Annals of Mathematics Studies},
    VOLUME = {209},
 PUBLISHER = {Princeton University Press, Princeton, NJ},
      YEAR = {2021},
     PAGES = {x+357},
      ISBN = {[9780691170442]; [9780691170459]; [9780691213507]},
   MRCLASS = {20F65 (20F67)},
  MRNUMBER = {4298722},
}

\end{document}